\theoremstyle{plain}
\newtheorem{theorem}{Theorem}[section]
\newtheorem*{theorem*}{Theorem}
\newtheorem{lemma}[theorem]{Lemma}
\newtheorem{proposition}[theorem]{Proposition}
\newtheorem{corollary}[theorem]{Corollary}
\newtheoremstyle{TheoremNum}
 {\topsep}{\topsep}              
 {\itshape}                      
 {}                              
 {\bfseries}                     
 {.}                             
 { }                             
 {\thmname{#1}\thmnote{ \bfseries #3}}
\theoremstyle{TheoremNum}
\theoremstyle{definition}
\newtheorem{example}[theorem]{Example}
\newtheorem{definition}[theorem]{Definition}
\theoremstyle{remark}
\newtheorem{remark}[theorem]{Remark}
\numberwithin{equation}{section}
\title{Motivic Springer Theory}
\author{Jens Niklas Eberhardt}
\author{Catharina Stroppel}
\address{Mathematical Institute, University of Bonn, Endenicher Allee 60, 53115 Bonn, Germany}
\email{stroppel@math.uni-bonn.de, mail@jenseberhardt.com}
\newcommand{\A}{\mathbb{A}}
\newcommand{\F}{\mathbb{F}}
\newcommand{\Q}{\mathbb{Q}}
\newcommand{\Z}{\mathbb{Z}}
\newcommand{\Aa}{\mathcal{A}}
\newcommand{\Cc}{\mathcal{C}}
\newcommand{\Dd}{\mathcal{D}}
\newcommand{\Ccc}{\mathcal{C}_{\infty}}
\newcommand{\Ddd}{\mathcal{D}_{\infty}}
\newcommand{\Rr}{\mathcal{R}}
\newcommand{\M}{\widetilde{\mathcal{N}}}
\newcommand{\N}{\mathcal{N}}
\newcommand{\Nnil}{\mathcal{N}_{nil}}
\newcommand{\Stein}{Z}
\newcommand{\Tt}{\mathcal{T}}
\newcommand{\un}{\Q}
\newcommand{\Chow}{\operatorname{CH}}
\newcommand{\Mot}{\operatorname{M}}
\newcommand{\D}{\operatorname{D}}
\newcommand{\h}{\operatorname{h}\!}
\newcommand{\Nerve}{\operatorname{N}\!}
\newcommand{\Ch}{\operatorname{Ch}\!}
\newcommand{\QQ}{\mathfrak{Q}}
\newcommand{\DM}{\operatorname{DM}}
\newcommand{\DTM}{\operatorname{DTM}}
\newcommand{\Dperf}{\operatorname{D_{perf}}}
\newcommand{\DperfZ}{\operatorname{D}^\Z_{\operatorname{perf}}}
\newcommand{\Hom}{\operatorname{Hom}}
\newcommand{\End}{\operatorname{End}}
\newcommand{\Ext}{\operatorname{Ext}}
\newcommand{\GL}{\operatorname{GL}}
\newcommand{\Fl}{\operatorname{Fl}}
\newcommand{\Gr}{\operatorname{Gr}}
\newcommand{\soc}{\operatorname{soc}}
\newcommand{\rad}{\operatorname{rad}}
\newcommand{\Aut}{\operatorname{Aut}}
\newcommand{\Comp}{\operatorname{Comp}}
\newcommand{\Compf}{\operatorname{Compf}}
\newcommand{\Fun}{\operatorname{Fun}}
\newcommand{\fin}{\operatorname{fin}}
\newcommand{\Rep}{\operatorname{Rep}}
\newcommand{\id}{\operatorname{id}}
\newcommand{\K}{\operatorname{K}}
\newcommand{\Ind}{\operatorname{Ind}}
\newcommand{\Res}{\operatorname{Res}}
\newcommand{\For}{\operatorname{For}}
\newcommand{\poi}{\operatorname{pt}}
\newcommand{\Spec}{\operatorname{Spec}}
\newcommand{\Dladic}{\operatorname{D}}
\newcommand{\Real}{\operatorname{Real}}
\newcommand{\DerG}{\operatorname{D}}
\newcommand{\disttrianglewithmaps}[5]{
	\begin{tikzcd}[column sep= 0.6cm]
		#1\arrow{r}{#2} \pgfmatrixnextcell #3 \arrow{r}{#4} \pgfmatrixnextcell#5\arrow{r}{+1}\pgfmatrixnextcell ~
	\end{tikzcd}
}
\newcommand{\disttriangle}[3]{\disttrianglewithmaps{#1}{}{#2}{}{#3}}
\begin{document}

\maketitle
\begin{abstract}
We show that representations of convolution algebras such as Lustzig's \emph{graded affine Hecke algebra} or the \emph{quiver Hecke algebra} and \emph{quiver Schur algebra} in type $A$ and $\widetilde{A}$ can be realised in terms of certain equivariant motivic sheaves called \emph{Springer motives}. To this end, we lay foundations to a \emph{motivic Springer theory} and prove formality results using weight structures. 

As byproduct, we express Koszul and Ringel duality in terms of a weight complex functor and show that partial quiver flag varieties in type $\widetilde{A}$ (with cyclic orientation) admit an affine paving.
\end{abstract}
\section{Introduction}
\subsection*{Motivation} An important insight in geometric representation theory is that algebras and their representations can often be constructed geometrically in terms of convolution of cycles. For example, the
\emph{Springer correspondence} \cite{springer_construction_1978} describes how irreducible representations of a Weyl group can be realised in terms of a convolution action on the free vector spaces spanned by irreducible components of \emph{Springer fibers}, see \cite{chriss_representation_2010}. Similar situations, which we refer to as \emph{Springer theories}, yield for example the \emph{affine Hecke algebra} \cite{lusztig_proof_1987}, the \emph{quiver Hecke algebra} (\emph{KLR algebra}) \cite{rouquier_2-kac-moody_2008} or the \emph{quiver Schur algebra} \cite{stroppel_quiver_2014} and their representations.

These constructions are usually employing Borel--Moore homology and constructible sheaves. Our main goal is to establish the foundations of a \emph{motivic Springer theory} using \emph{Chow groups} and \emph{motivic sheaves} instead. Motivic sheaves, see \cite{ayoub_les_2007} and \cite{cisinski_triangulated_2019}, are a relative version of Voevodsky's \emph{triangulated category of mixed motives} and their Hom-spaces are governed by \emph{Chow groups}. As established in the setting of flag varieties in \cite{soergel_perverse_2018}, motivic sheaves can serve as a \emph{graded version} of constructible sheaves that are technically advantageous over the mixed $\ell$-adic sheaves \cite{beilinson_faisceaux_1982} or mixed Hodge modules \cite{saito_young_2016}.

Convolution for Chow groups can be interpreted as composition for $\Hom$-spaces in categories of motivic sheaves. This motivates our definition of a \emph{motivic extension algebra}. We discuss how the graded affine Hecke algebra as well as quiver Hecke and quiver Schur algebras arise this way.
We then prove that \emph{purity} of certain fibers implies that the perfect derived category of a motivic extension algebra can be realised as a subcategory of equivariant motivic sheaves called \emph{Springer motives}---a statement we refer to as \emph{formality}.

To achieve our formality results, we make use of the theory of \emph{weight structures} from \cite{bondarko_weight_2010} (a concept independently introduced under the name \emph{co-$t$-structures} in \cite{pauksztelloCompactCorigidObjects2008} and studied already in the context of silting theory, see e.g.  \cite{Steffensilting})  and \emph{weight complex functors} from  \cite{bondarko_weight_2010}.
In a geometric context, formality is often obtained using of Deligne's \emph{yoga of weights}; for example one makes use of eigenvalues of a Frobenius morphism or weights of a mixed Hodge structure.
Various aspects of this yoga are formalised in the notion of  the \emph{Chow weight structure} on categories of motivic sheaves which we extend to the category of Springer motives.
In an algebraic context, we show that \emph{Koszul duality} and \emph{Ringel duality}, derived equivalences between  Koszul and Ringel dual algebras, respectively, can be expressed in terms of a weight complex functor.

\subsection*{Notation and conventions}
We use the term \emph{variety} for (not necessarily reduced) quasi-projective separated schemes of finite type over a field $k.$ In the introduction and most of the manuscript $k=\overline{\F}_p$ and $G$ is a linear algebraic group over $k.$ For rings $R\subset R'$ and $R$-modules $M$ we denote by $M_{R'}=M\otimes_RR'$ the extension of scalars. We use a cohomological convention for chain complexes and denote by $C[n]$ the cohomological shift with $(C[n])^i=C^{i+n}.$

\subsection*{Setup and main results} Let $\mu_i: \widetilde{\mathcal{N}}_i\to \N$ be a collection of $G$-equivariant proper maps of varieties such that each $\M_i$ is smooth. We consider the \emph{motivic extension algebra}
$$E=\bigoplus_{n\in \Z}\bigoplus_{i,j}\Hom_{\DM_G(\N,\Q)}(\mu_{i,!}(\Q_{\M_i}),\mu_{j,!}(\Q_{\M_j})(n)[2n]),$$
which is a motivic version of the $\ell$-adic extension algebra $E^{\acute{e}t}_\ell$ defined via the category $\D_G(\N,\Q_\ell)$ of equivariant $\ell$-adic sheaves. 

The algebra $E$ can be described in classical terms as the $G$-equivariant Chow groups of the Steinberg varieties $Z_{i,j}=\M_i\times_{\N}\M_j$ equipped with a convolution product. Namely, by Corollary ~\ref{cor:extensionalgebraaschowgroups} there is an isomorphism
$$
    E\cong\bigoplus_{i,j}\Chow_{\bullet}^G(Z_{i,j})_\Q.
$$
The motivic extension algebra is defined using the category $\DM_G(\N,\Q)$ of $G$-\emph{equivariant motivic sheaves} on $\N,$ which was introduced in \cite{soergel_equivariant_2018} as a motivic version of $\D_G(\N,\Q_\ell).$ Similarly to their $\ell$-adic counterpart, equivariant motivic sheaves are equipped with a six-functor-formalism. Their hom-spaces are governed by equivariant higher Chow groups and they admit an autoequivalence $(1)$ called  Tate twist, which will serve as a shift of grading functor for us.

We define the full subcategory of \emph{Springer motives}
$$\DM^{Spr}_G(\N,\Q)=\langle\mu_{i,!}(\Q_{\M_i})\rangle_{\cong,\inplus,\Delta,(\pm1)}\subset \DM_G(\N,\Q),$$
see Definition~\ref{def:springermotives}, and introduce local purity and finiteness conditions (PT) and (FO), see Section~\ref{sec:generalsetup}. We will prove the following \emph{formality} result showing that Springer motives can be described solely in terms of the algebra $E$ carrying \emph{no higher structure}.
\begin{theorem*}[Theorem~\ref{thm:main}]
Assuming (PT) and (FO) there is an equivalence of categories between the category of Springer motives and the perfect derived category of graded modules of the motivic extension algebra
\begin{center}
\begin{tikzcd}
\DM^{Spr}_G(\N,\Q) \arrow[r, "\sim"] & {\DperfZ(E)}.
\end{tikzcd}
\end{center}
Moreover, for all primes $\ell\neq p$ the $\ell$-adic realisation functor $\Real_\ell$ gives an isomorphism $E\otimes_\Q\Q_\ell\cong E^{\acute{e}t}_\ell$ and acts as a degrading functor with respect to the Tate-twist $(1)$ in the sense of \cite{beilinson_koszul_1996}
\begin{center}
\begin{tikzcd}
\DM^{Spr}_G(\N,\Q_\ell) \arrow[r, "\Real_\ell"]&
{\D^{Spr}_G(\N,\Q_\ell).}                                   
\end{tikzcd}
\end{center}
\end{theorem*}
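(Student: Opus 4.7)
The plan is to realise the equivalence as an instance of Bondarko's \emph{weight complex formalism} applied to the Chow weight structure on $\DM_G(\N,\Q)$. First I would recall from the earlier sections that $\DM_G(\N,\Q)$ carries a Chow weight structure in which the motive of a smooth projective variety is pure of weight $0$. Using that each $\mu_i$ is proper and each $\M_i$ is smooth, together with the local purity assumption (PT), I would verify that the generators $\mu_{i,!}(\Q_{\M_i})$ lie in the heart of the weight structure, so all their Tate twists $\mu_{i,!}(\Q_{\M_i})(n)[2n]$ are weight $0$ as well. By construction $\DM^{Spr}_G(\N,\Q)$ is generated by these pure objects under shifts, twists, cones, and summands, so the Chow weight structure restricts to a bounded weight structure on this subcategory.

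Next I would identify the heart $\DM^{Spr}_G(\N,\Q)^{w=0}$. By the preceding purity step, every object of the heart is a direct summand of a finite direct sum of generators $\mu_{i,!}(\Q_{\M_i})(n)[2n]$. Taking the direct sum $P=\bigoplus_i\mu_{i,!}(\Q_{\M_i})$ as a projective generator and using that, by the description of $E$ as convolution Chow groups of the Steinberg varieties $Z_{i,j}$, its graded endomorphism ring is precisely $E$, a Yoneda-type argument gives an equivalence between the heart (viewed as a graded additive category via the Tate twist) and the category of finitely generated graded projective $E$-modules. The finiteness condition (FO) ensures that $E$ is left/right bounded enough for this idempotent-completed additive category to be equivalent to $\operatorname{proj}^{\Z}(E)$.

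Now I would invoke the weight complex functor of \cite{bondarko_weight_2010}, which for a triangulated category with a bounded weight structure whose heart is \emph{negative} (i.e.\ $\Hom(X,Y[i])=0$ for $X,Y$ in the heart and $i>0$) refines to an equivalence with the bounded homotopy category of the heart. The negativity here is exactly what (PT) buys us: purity forces the vanishing of positive Ext's between Tate-twisted generators in the Chow weight structure. Combined with the identification of the heart in the previous step, the weight complex functor yields
\[
\DM^{Spr}_G(\N,\Q)\xrightarrow{\;\sim\;}\K^b(\operatorname{proj}^{\Z}(E))\simeq\DperfZ(E),
\]
which is the first equivalence. The hardest part of the argument is establishing this negativity together with bounded generation on the nose, since it is the lynchpin that turns a mere weight structure into the honest algebraic equivalence; everything else is packaging.

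For the second part, I would observe that the $\ell$-adic realisation $\Real_\ell$ is a tensor-triangulated functor compatible with the six functors, so it sends $\mu_{i,!}(\Q_{\M_i})$ to its $\ell$-adic counterpart and induces a map $E\otimes_\Q\Q_\ell\to E^{\acute{e}t}_\ell$ on graded endomorphisms. Because the corresponding Chow groups and $\ell$-adic (equivariant, Borel--Moore) homology groups of the Steinberg varieties $Z_{i,j}$ are known to agree after extending scalars under the purity hypothesis (PT), this map is an isomorphism. Finally, the degrading statement in the sense of \cite{beilinson_koszul_1996} is formal: by construction $\Real_\ell$ intertwines the motivic Tate twist $(1)$ with the $\ell$-adic Tate twist, which on the level of $\Hom$-spaces collapses the extra $\Z$-grading coming from the motivic twist onto the usual cohomological grading in the $\ell$-adic world, matching the definition of a degrading functor.
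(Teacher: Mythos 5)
You have the right architecture (tilting generators, weight complex functor, heart of projectives), but there are two substantive errors in the first part.

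\textbf{Reliance on a conjectural weight structure.} You open by invoking "the Chow weight structure on $\DM_G(\N,\Q)$" and then restricting it. But the paper explicitly remarks, just before the main theorem, that the existence of a Chow weight structure on the \emph{equivariant} category $\DM^{gm}_G(\N)$ is conjectural, since one would need $G$-equivariant resolutions of singularities or alterations. The paper's whole point is to sidestep this: instead of restricting an ambient structure, it constructs a bounded weight structure \emph{directly} on $\DM^{Spr}_G(\N,\Q)$ by verifying that the generators $\widehat{\Tt}^{Spr}$ are a negative family and applying Proposition~\ref{prop:defineweightstructure}. Your proof would not compile in the equivariant case as written.

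\textbf{Negativity vs.\ tilting.} You assert that a bounded weight structure whose heart is \emph{negative} ($\Hom(X,Y[i])=0$ for $i>0$) yields an equivalence via the weight complex functor, and that (PT) supplies this negativity. Both halves of this are off. Negativity is automatic for the heart of any weight structure; it cannot be the point. What is actually required for the weight complex functor to be an equivalence (Proposition~\ref{prop:weightcomplexfunctorequivalence}) is that the heart is \emph{tilting}, i.e.\ the additional vanishing for $i<0$ holds as well. The genuine geometric input of (PT) together with (FO) is exactly this stronger statement: pointwise pure Tate-ness of the $\mu_{i,!}(\Q_{\M_i})$ (Proposition~\ref{prop:puretatefiberimpliespuretate}) combined with the finite-orbit dévissage (Proposition~\ref{prop:homvanishingpure}) gives $\Hom_{\DM_G}(M,N[n])=0$ for \emph{all} $n\neq 0$, which is Proposition~\ref{prop:springerobjectstilting}. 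With tilting in hand, Proposition~\ref{prop:defineweightstructure} and Corollary~\ref{cor:triangulatedcatequalsmodulecat} give the equivalence; your Yoneda step identifying the heart with $\operatorname{proj}^{\Z}(E)$ and the realisation/degrading part are fine and match the paper's Propositions 3.10 and 2.23 in spirit.
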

\begin{remark}
The analogous statement replacing motivic sheaves by mixed $\ell$-adic sheaves (or mixed Hodge modules) fails because there are non-trivial extensions between the Tate objects $\Q_\ell(n)$ in the category of mixed $\ell$-adic sheaves. These unwanted extensions were first addressed in the context of perverse sheaves on flag varieties and category $\mathcal{O}$ in \cite[Section 4]{beilinson_koszul_1996}. There, a workaround is proposed using the derived category of mixed $\ell$-adic perverse sheaves which have a semisimple Frobenius action on their associated graded with respect to the weight filtration. This construction has several drawbacks when compared to motivic sheaves. First, it is not clear how to extend it to the equivariant case and to settings where no perverse $t$-structure exists. Secondly, it is difficult to determine if the six functors preserve the semisimplicity of the Frobenius. This and the independence of the prime $\ell$ are the main technical advantage of motivic sheaves.
\end{remark}
We apply the result in the setting of Lusztig's graded affine Hecke algebra $\overline{\mathbb{H}}(G)$ associated to the root datum of a reductive group $G,$ see also \cite{eberhardt_springer_2021}.
\begin{theorem*}[Theorem~\ref{thm:heckealgebra}]
Under a standard assumption on $p,$ there is an equivalence of categories
$$\DM^{Spr}_{G\times\mathbb{G}_m}(\Nnil,\Q)\cong \DperfZ(\overline{\mathbb{H}}(G))$$
between the category of Springer motives on the nilpotent cone $\Nnil$ and the perfect derived category of graded modules of $\overline{\mathbb{H}}(G).$
\end{theorem*}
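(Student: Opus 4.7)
The plan is to specialise the main theorem, Theorem~\ref{thm:main}, to the classical Springer data for $G$. I take the single proper $G\times\Gm$-equivariant map $\mu\colon\Mnil=T^*(G/B)\to\Nnil$ given by the Springer resolution, with $\Gm$ scaling the cotangent fibres. It then suffices to verify the hypotheses (PT) and (FO), to identify the resulting motivic extension algebra $E$ with Lusztig's graded affine Hecke algebra $\overline{\mathbb{H}}(G)$, and to invoke Theorem~\ref{thm:main}.

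For the purity hypothesis (PT), I would use Spaltenstein's classical result that every Springer fibre $\mu^{-1}(x)$ admits a paving by affine spaces; the \emph{standard assumption on $p$} (typically that $p$ is good for $G$, or at least does not divide the order of the Weyl group) is what transports this paving to our characteristic $p$ setting and ensures compatibility of Chow groups with the geometric stratification. The finiteness hypothesis (FO) follows from the analogous paving of the Steinberg variety $\Stein=\Mnil\times_\Nnil\Mnil$ by affine bundles over $B$-orbit closures, which gives finite generation of the equivariant Chow groups. Granted these, Theorem~\ref{thm:main} yields an equivalence
$$\DM^{Spr}_{G\times\Gm}(\Nnil,\Q)\;\simeq\;\DperfZ(E),$$
and by Corollary~\ref{cor:extensionalgebraaschowgroups} we have $E\cong\Chow^{G\times\Gm}_{\bullet}(\Stein)_\Q$ with convolution product.

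The remaining, and main, step is to produce a graded algebra isomorphism $E\cong\overline{\mathbb{H}}(G)$. Classically, in $\ell$-adic Borel--Moore homology, this is the convolution description of the graded affine Hecke algebra as the $G\times\Gm$-equivariant homology of the Steinberg variety; see \cite{chriss_representation_2010}. I would upgrade this to a motivic statement by one of two equivalent routes: either (a) mimicking the classical construction directly with equivariant Chow groups, producing the polynomial part from the zero section $G/B\hookrightarrow\Stein$, the Demazure-type operators from the codimension-one strata indexed by simple reflections, and verifying the Hecke relations by equivariant localisation to the torus-fixed points; or (b) combining the classical $\ell$-adic identification with the comparison $E\otimes_\Q\Q_\ell\cong E^{\acute{e}t}_\ell$ provided by the realisation functor in Theorem~\ref{thm:main}, together with a faithful flatness argument to descend the isomorphism to $\Q$.

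I expect the main obstacle to be route (a): although the generators and relations are natural, checking that the convolution product on equivariant Chow groups produces the correct signs and polynomial coefficients requires setting up equivariant intersection-theoretic analogues of the cap-product and pushforward formulas from \cite{chriss_representation_2010}, and making the localisation argument work with Chow rather than ordinary cohomology. Route (b) circumvents this at the cost of invoking the $\ell$-adic result as a black box, and essentially reduces the motivic statement to checking that the isomorphism $E\otimes_\Q\Q_\ell\cong E^{\acute{e}t}_\ell$ is compatible with the respective identifications with $\overline{\mathbb{H}}(G)_{\Q_\ell}$, which is straightforward because both sides are built from the same geometric cycles.
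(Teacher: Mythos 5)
Your overall strategy matches the paper: take the single map $\mu:\M\to\Nnil$ with $A=G\times\Gm$, verify (PT) and (FO), identify $E$ with $\overline{\mathbb{H}}(G)$, and apply Theorem~\ref{thm:main}. However, two of the three verification steps have problems.

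First, your argument for (FO) misreads the condition. Condition (FO) asserts that the image $\mu(\M)\subset\Nnil$ has only finitely many $G$-orbits; it has nothing to do with pavings of the Steinberg variety or finite generation of Chow groups, which is what you offer. Since $\mu$ is surjective, (FO) is simply the classical finiteness of nilpotent $G$-orbits in a reductive Lie algebra (the paper cites \cite{carter_finite_1993}). Your proposed argument, even if it established what it claims, would not touch (FO).

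Second, on (PT): you invoke ``Spaltenstein's classical result'' that Springer fibres admit affine pavings. Spaltenstein's pavings are in type $A$; the general characteristic-zero result is due to De Concini--Lusztig--Procesi, and transporting it to characteristic $p$ is genuinely nontrivial. This is exactly what forces the paper's precise hypothesis: $p$ good for every classical constituent and $p>3(h+1)$ for the exceptional ones, with the purity of $\Mot(\mu^{-1}(x))$ supplied by \cite[Theorem 1.1]{eberhardt_springer_2021}. Your guess at the ``standard assumption'' (good prime, or $p\nmid |W|$) is weaker than what is actually needed and does not by itself yield (PT) as stated.

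For the identification $E\cong\overline{\mathbb{H}}(G)$ the paper is terser than you are: it identifies $E$ via Corollary~\ref{cor:extensionalgebraaschowgroups} with the convolution algebra $\Chow^\bullet_{A}(Z)_\Q$ and cites Lusztig's definition of the graded affine Hecke algebra in terms of equivariant (Borel--Moore) homology of $Z$; because $Z$ is paved by affine bundles, equivariant Chow and equivariant Borel--Moore homology coincide, so Lusztig's theorem applies directly. This is essentially your route (a) with the work outsourced to the existing literature rather than reconstructed via localisation, while your route (b) (using the $\ell$-adic comparison from Theorem~\ref{thm:main}) is a valid alternative that the paper does not take.
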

We obtain a similar result for quiver Hecke algebras (KLR algebras) $R_\mathbf{d}$ for quivers $Q$ in type $A$ and $\widetilde{A}$ (with cyclic orientation).
\begin{theorem*}[Theorem~\ref{thm:quiverheckealgebra}]
There is an equivalence of categories
$$\DM^{Spr}_{\GL(\mathbf{d})}(\Rep(\mathbf{d}),\Q)\cong \DperfZ(R_\mathbf{d})$$
between the category of Springer motives for representations of $Q$ and the perfect derived category of graded modules of $R_\mathbf{d}.$
\end{theorem*}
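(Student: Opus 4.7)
The plan is to apply the formality Theorem~\ref{thm:main} to the geometric setup underlying the quiver Hecke algebra and to identify the resulting motivic extension algebra with $R_\mathbf{d}$.

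First I would set up the geometric data. Let $G = \GL(\mathbf{d})$ act on $\Rep(\mathbf{d})$ and, for each (graded) composition $\mathbf{i}$ of $\mathbf{d}$, let $\M_\mathbf{i}$ denote the quiver flag variety parametrising pairs consisting of a representation of $Q$ on the ambient graded vector space together with a compatible flag of subrepresentations of type $\mathbf{i}$. The projection $\mu_\mathbf{i} \colon \M_\mathbf{i} \to \Rep(\mathbf{d})$ forgetting the flag is $G$-equivariant and proper; moreover $\M_\mathbf{i}$ is smooth, being an affine bundle over a product of partial flag varieties. This places us exactly in the setting of the Main Theorem.

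Next I would identify the motivic extension algebra $E$ with $R_\mathbf{d}$. By Corollary~\ref{cor:extensionalgebraaschowgroups}, $E$ is isomorphic as a graded algebra to $\bigoplus_{\mathbf{i},\mathbf{j}} \Chow_\bullet^{G}(\M_\mathbf{i} \times_{\Rep(\mathbf{d})} \M_\mathbf{j})_\Q$ equipped with convolution. The identification of this convolution algebra with the KLR algebra $R_\mathbf{d}$ is the geometric theorem of Varagnolo--Vasserot in type $A$, together with its extension to the cyclic type $\widetilde{A}$ situation; concretely one matches the standard KLR generators (vertex idempotents, polynomial generators, crossings) with explicit cycle classes on the Steinberg-type varieties $\M_\mathbf{i} \times_{\Rep(\mathbf{d})} \M_\mathbf{j}$. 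Since these generators are known to exhaust the convolution algebra and satisfy precisely the KLR relations, this yields an isomorphism of graded algebras $E \cong R_\mathbf{d}$.

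Finally I would verify the hypotheses (PT) and (FO) of the Main Theorem. The finiteness condition (FO) is immediate: only finitely many compositions of $\mathbf{d}$ are relevant, and all orbit--stabiliser data on $\Rep(\mathbf{d})$ are of finite type. The purity condition (PT) requires the fibres of each $\mu_\mathbf{i}$ over every point of $\Rep(\mathbf{d})$ to define pure motives, which follows as soon as these fibres admit an affine paving. In type $A$ the paving is classical (of Spaltenstein--Shimomura type), but for type $\widetilde{A}$ with cyclic orientation the existence of an affine paving on partial quiver flag varieties is precisely the geometric byproduct announced in the abstract; this is the main obstacle and the essential new geometric input needed to treat the cyclic case uniformly. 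Once (PT) and (FO) are established, Theorem~\ref{thm:main} delivers the desired equivalence $\DM^{Spr}_{\GL(\mathbf{d})}(\Rep(\mathbf{d}),\Q) \cong \DperfZ(R_\mathbf{d})$.
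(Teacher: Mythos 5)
Your proposal follows the same route as the paper: set up the maps $\mu_{\underline{\mathbf{d}}}\colon\QQ(\underline{\mathbf{d}})\to\Rep(\mathbf{d})$ for complete compositions, identify the motivic extension algebra with $R_\mathbf{d}$ via Varagnolo--Vasserot, verify (PT) and (FO), and invoke Theorem~\ref{thm:main}. The overall structure is correct.

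Two points are glossed over. First, your justification of (FO) ("only finitely many compositions are relevant, and all orbit--stabiliser data on $\Rep(\mathbf{d})$ are of finite type") does not actually establish that the images $\mu_{\underline{\mathbf{d}}}(\QQ(\underline{\mathbf{d}}))$ meet only finitely many $\GL(\mathbf{d})$-orbits. In type $A$ this follows from Gabriel's theorem, but in the cyclic type $\widetilde{A}$ the full space $\Rep(\mathbf{d})$ can have infinitely many orbits; the paper's argument uses the observation that any $\rho$ in the image of $\mu_{\underline{\mathbf{d}}}$ admits a strictly stable flag and hence is nilpotent, and that nilpotent representations of the cyclic quiver with a fixed dimension vector fall into finitely many isomorphism classes. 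Second, Varagnolo--Vasserot's isomorphism is stated for equivariant Borel--Moore homology, whereas the motivic extension algebra here is computed in terms of equivariant Chow groups; the transfer of their argument to Chow groups uses the same affine pavings you invoke for (PT), since those pavings force the cycle class map from equivariant Chow groups to equivariant Borel--Moore homology to be an isomorphism on the Steinberg-type varieties. You cite the pavings for (PT) but do not note this second role, which is the key point making the adaptation of Varagnolo--Vasserot to the Chow-theoretic setting work.
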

A similar result holds for quiver Schur algebras, see Theorem~\ref{thm:quiverschuralgebra}. To establish the local purity condition (PT) in this setting, we prove that partial quiver flag varieties in type $\widetilde{A}$ (with cyclic orientation) admit affine pavings.
\subsection*{Summary} In Section~\ref{sec:weightstructuresandtilting}, we recall weight structures, weight complex functors and apply them to Koszul and Ringel duality. In Section~\ref{sec:formalismofequivariantmotivicsheaves}, we recall the formalism of (equivariant) motivic sheaves and establish important foundational results. In Section~\ref{sec:motivicspringertheory}, we introduce Springer motives, the motivic extension algebra and use the previous results to prove our main theorem on the formality of Springer motives. Finally, in Section~\ref{sec:heckealgebra} and~\ref{sec:quiverheckeandquiverschur} we discuss applications to affine Hecke algebras and quiver Hecke/Schur algebras.
\subsection*{Further directions}
\begin{enumerate*}

\item Most results should also hold with coefficients in characteristic $p$ by using an equivariant version of the formalism of motivic sheaves developed by Kelly and the first author in \cite{eberhardt_mixed_2019}, under a small restriction on the prime $p.$ 
\\
\item It should be possible to show that the category of $\ell$-adic Springer sheaves $\D^{Spr}_G(\N,\Q_\ell)$ is equivalent to the dg-derived category over the formal dg-algebra $(E^{\acute{e}t}_\ell,d=0)$ by combining our results and the techniques of \cite{schnurer_equivariant_2011}, where equivariant formality for the flag variety is discussed.
\end{enumerate*}
\subsection*{Relation to other work}
\begin{enumerate*}
%
%
\item 
General properties of geometric extension algebras and formality statements 
in the context of ($\ell$-adic) sheaves
have been discussed in the literature extensively, see for example \cite{sauter_survey_2013}, \cite{kato_algebraic_2017}, 
 \cite{mcnamara_representation_2020} and \cite{polishchuk_semiorthogonal_2019}.
 \\
 \item Formality results in the context of perverse sheaves can be found e.g. for graded affine Hecke algebras in \cite{rider_formality_2013},  \cite{rider_perverse_2016} and \cite{rider_formality_2021} and related to quiver Hecke algebras in \cite{mcnamara_monoidality_2017} and \cite{webster_weighted_2019}.
 \\






\end{enumerate*}


\subsection*{Acknowledgements} The first author thanks Hans Franzen for extensive discussions on quiver flag varieties. We thank Wolfgang Soergel and Matthias Wendt for their help with equivariant motivic sheaves.
This work was supported by the Hausdorff Center of Mathematics (grant EXC 2047).

\section{Weight structures and applications}\label{sec:weightstructuresandtilting}
We recall the definition and examples of weight structures due to Bondarko \cite{bondarko_weight_2010}. The main goal is the definition of the weight complex functor and applications to Koszul and Ringel duality.
\subsection{Weight structures}\label{sec:weightstructures}
We start with the definition of a weight structure, see \cite[Definition 1.1.1]{bondarko_weight_2010}.
\begin{definition}\label{def:weightstructure}
	Let $\Cc$ be a triangulated category. A \emph{weight structure} $w$ on $\Cc$ is a pair $w=(\Cc^{w\leq 0},\Cc^{w\geq 0})$ of idempotent-closed full subcategories of $\Cc,$ such that with $\Cc^{w\leq n}:=\Cc^{w\leq 0}[-n]$ and $\Cc^{w\geq n}:=\Cc^{w\geq 0}[-n]$ the following conditions are satisfied:
	\begin{enumerate}
		\item $\Cc^{w\leq 0}\subseteq \Cc^{w\leq 1}$ and $\Cc^{w\geq 1}\subseteq \Cc^{w\geq 0};$
		\item for all $X\in \Cc^{w\geq 0}$ and $Y\in\Cc^{w\leq -1}$, we have $\Hom_\Cc(X,Y)=0;$
		\item  \label{enum:triangleaxiom} for any $X\in \Cc$ there is a distinguished triangle 
		\begin{center}\disttriangle{A}{X}{B}\end{center} with $A\in \Cc^{w\geq 1}$ and $B\in \Cc^{w\leq 0}.$
	\end{enumerate}
	The full subcategory $\Cc^{w=0}=\Cc^{w\leq 0}\cap\Cc^{w\geq 0}$ is called the \emph{heart}. A weight structure is called \emph{bounded} if $\bigcup_{i} \Cc^{w\leq i} =\bigcup_{i} \Cc^{w\geq i}=\Cc.$ A triangulated functor $F:\Cc\to \Dd$ between two categories with weight structures is called \emph{weight exact} if $F(\Cc^{w\leq 0})\subset \Dd^{w\leq 0}$ and $F(\Cc^{w\geq 0})\subset \Dd^{w\geq 0}.$
\end{definition}
\begin{example}\label{exp:exampleweightstructures}
(1) The prototypical example of a weight structure arises from the stupid filtration of complexes. Let $\Aa$ be an idempotent-closed additive category. Then there is a bounded weight structure on $\K^b(\Aa)$ given by
\begin{align*}
    \K^b(\Aa)^{w\geq 0}&=\langle X \mid X_i = 0\text{ for all }i < 0\rangle_{\cong} \,\,\,\,\text{ and}\\
    \K^b(\Aa)^{w\leq 0}&=\langle X \mid X_i = 0\text{ for all }i > 0\rangle_{\cong},
\end{align*}
the subcategories generated by the complexes in non-negative and non-positive degrees under isomorphism. Most of the axioms of a weight structure are straigtforward to check. The idempotent-completeness $\K^b(\Aa)^{w\geq 0}$ and $\K^b(\Aa)^{w\leq 0}$ is discussed in \cite{schnurerHomotopyCategoriesIdempotent2011a}.
The heart of the weight structure is $\K^b(\Aa)^{w=0}=\Aa.$ In general, the heart of a weight structure is additive and idempotent closed but not necessarily abelian.
\\\noindent
(2) Assume that $\Aa$ is abelian and that every object in $\Aa$ has a finite projective resolution. Then one can identify the bounded derived category of $\Aa$ with the bounded homotopy category of projectives
$$\operatorname{D}^b(\mathcal{A})=\K^b(\operatorname{Proj}(\Aa)).$$
Hence $\operatorname{D}^b(\mathcal{A})$ is equipped with a weight structure with heart $\operatorname{Proj}(\Aa).$ This should be compared to the natural $t$-structure on $\operatorname{D}^b(\mathcal{A})$ with heart $\Aa.$
\\\noindent
(3) A particularly interesting example of weight structures arises in the world of motives, namely for Voevodsky's \emph{triangulated category of geometric motives}  $\DM_{gm}(k,\Q)$ over a perfect field $k$, see \cite{voevodsky_triangulated_2000}. The existence of a $t$-structure on $\DM_{gm}(k,\Q)$ is a notoriously difficult problem that implies, see \cite{beilinson_remarks_2010}, for example Grothendieck's standard conjectures. 

Instead of a $t$-structure, Bondarko \cite{bondarko_weight_2010} showed the existence of a weight structure $w$ called \emph{Chow weight structure} on the category $\DM_{gm}(k,\Q)$ whose heart
$$\DM_{gm}(k,\Q)^{w=0}\cong\operatorname{Chow}(k,\Q)$$
is equivalent to the category of \emph{Chow motives}. The category of Chow motives was introduced by Grothendieck and has an elementary definition, see \cite{milne_motives_2012}. Namely, one first considers the \emph{category of correspondences} of smooth projective varieties up to rational equivalence. Here, objects are smooth projective varieties $X$ over $k$ and morphisms from $X$ to $Y$ are elements in the rational Chow group
$$\Chow_{\dim(Y)}(X\times Y)_\Q.$$
Morphisms are composed via convolution. The category $\operatorname{Chow}(k,\Q)$ is then obtained from the category of correspondences by idempotent completion and tensor-inverting the \emph{Lefschetz motive} $\mathbb{L}=\ker(\mathbb{P}_k^1\to \Spec(k)).$
In other words, objects of weight zero in $\DM_{gm}(k,\Q)$ arise from motives of smooth projective varieties. 
\end{example}
Following \cite[Theorem 4.3.2]{bondarko_weight_2010} we now show how to define weight structures by specifying their heart.
\begin{definition}\label{def:tilting} A collection of objects $\Tt$ in a triangulated category $\Cc$ is called \emph{positive}, \emph{negative} or \emph{tilting}, respectively, if 
$$\Hom_\Cc(M,N[n])=0$$
for all $M,N\in\Tt$ where $n<0, n>0$ or $n\neq 0,$ respectively.
\end{definition}
\begin{proposition}\label{prop:defineweightstructure}
Let $\Cc$ be an idempotent closed triangulated category and $\Tt$ be a collection of objects in $\Cc.$ Assume that $\Tt$ is negative and generates $\Cc$ with respect to isomorphisms, direct summands and triangles
$$\Cc=\langle \Tt \rangle_{\cong, \inplus, \Delta}.$$

Then there is a bounded weight structure on $\Cc$ whose heart
$$\Cc^{w=0}=\langle \Tt \rangle_{\cong,\inplus,\oplus}$$
is the full subcategory of $\Cc$ generated by $\Tt$ under isomorphisms, direct summands and finite direct sums.
\end{proposition}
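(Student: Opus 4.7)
The plan is to follow Bondarko's strategy from \cite[Theorem 4.3.2]{bondarko_weight_2010}: first guess the candidate subcategories from the prescribed heart, then verify orthogonality by induction from $\Tt$, and finally produce weight decompositions by an inductive filtration argument using the generation hypothesis.

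More concretely, set $\mathcal{H}=\langle \Tt \rangle_{\cong,\inplus,\oplus}$ and define $\Cc^{w\geq 0}$ to be the idempotent-closed extension-closure of $\bigcup_{i\leq 0}\mathcal{H}[i]$, and $\Cc^{w\leq 0}$ similarly from $\bigcup_{i\geq 0}\mathcal{H}[i]$. By construction these are idempotent-closed and satisfy the nesting $\Cc^{w\leq 0}\subseteq\Cc^{w\leq 1}$ and $\Cc^{w\geq 1}\subseteq\Cc^{w\geq 0}$ required in Definition~\ref{def:weightstructure}(1). The claimed heart $\Cc^{w=0}=\mathcal{H}$ will be read off at the end after establishing the decompositions.

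The next step is the orthogonality axiom: for $X\in\Cc^{w\geq 0}$ and $Y\in\Cc^{w\leq -1}$ one has $\Hom_\Cc(X,Y)=0$. Here I would argue in stages. First, negativity of $\Tt$ immediately gives $\Hom(M,N[n])=0$ for $M,N\in\Tt$ and $n<0$; this extends verbatim to $\mathcal{H}$ since $\Hom$ is biadditive and respects direct summands. Then I would fix $Y\in\Cc^{w\leq -1}$ and show that the class of $X$ with $\Hom(X,Y)=0$ is closed under extensions, shifts in the allowed range, and summands, hence contains $\Cc^{w\geq 0}$; the symmetric argument in the other variable finishes the verification.

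For the nontrivial condition~\ref{enum:triangleaxiom}, I would construct weight decompositions inductively along the description $\Cc=\langle \Tt \rangle_{\cong,\inplus,\Delta}$. The base case is a shift $T[n]$ with $T\in\Tt$, which already lies in $\Cc^{w\leq 0}$ or $\Cc^{w\geq 1}$ depending on the sign of $n$, so the decomposition is trivial. For the inductive step, given a triangle $U\to V\to W\xrightarrow{+1}$ with weight decompositions $A_U\to U\to B_U$ and $A_W\to W\to B_W$ already produced, the key trick is that $\Hom(A_W,B_U[1])=0$ by the orthogonality just established, so the composite $A_W\to W\to U[1]\to B_U[1]$ vanishes, allowing one to lift and construct via the $3\times 3$-lemma a triangle $A_V\to V\to B_V$ with $A_V$ an extension of $A_W$ by $A_U$ and $B_V$ an extension of $B_W$ by $B_U$; these land in $\Cc^{w\geq 1}$ and $\Cc^{w\leq 0}$ by definition. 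Summands are handled using idempotent-completeness of $\Cc$, splitting a decomposition of the ambient object; this is the step that really uses the Karoubianness hypothesis.

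The remaining items are comparatively clean. Boundedness follows because any object of $\Cc$ is obtained from finitely many shifts of objects of $\Tt$, so it lies in some $\Cc^{w\leq n}\cap\Cc^{w\geq -n}$. The heart identification $\Cc^{w=0}=\mathcal{H}$ uses that $\mathcal{H}$ is already idempotent-closed additive inside $\Cc^{w=0}$, while any object of $\Cc^{w=0}$, by orthogonality and the inductive construction above, splits as a summand of an iterated extension of objects of $\mathcal{H}$ in a single degree, which then splits as a direct sum by the vanishing of $\Hom$ in negative degrees. The main obstacle I anticipate is the third step, namely the careful bookkeeping in the $3\times 3$-lemma construction and the verification that the resulting subcategories are genuinely idempotent-closed; this is where Bondarko's argument is most delicate, and handling summands correctly requires using that $\Cc$ itself is assumed idempotent-closed.
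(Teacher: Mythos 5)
Your proof is correct and takes essentially the same approach as the paper, which simply cites Bondarko \cite[Theorem 4.3.2]{bondarko_weight_2010} in lieu of a proof: you reconstruct his argument by defining the candidate weight classes as the (idempotent-closed) extension-closures of the shifts of $\mathcal{H}=\langle\Tt\rangle_{\cong,\inplus,\oplus}$, verifying orthogonality by dévissage from negativity, and constructing weight decompositions inductively along the generation hypothesis via the octahedral/$3\times 3$ argument hinging on the vanishing $\Hom(A_W,B_U[1])=0$, with idempotent completeness of $\Cc$ invoked to pass to direct summands. The details you flag as delicate (the $3\times 3$ bookkeeping, the summand step, and the heart identification) are exactly the ones the cited reference handles carefully, so there is no genuine gap.
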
 
\subsection{Weight complex functor} An object in a triangulated category $\Cc$ with bounded weight structure can be built from objects which are pure, that is in $\Cc^{w=n}=\Cc^{w=0}[-n]$ for $n\in \Z,$ using the distinguished triangles in Definition~~\ref{def:weightstructure}(~\ref{enum:triangleaxiom}), see \cite[Proposition 1.5.6]{bondarko_weight_2010}. This will imply that any geometric motive can be built from motives of smooth projective varieties. This should be seen as a reflection of Deligne's yoga of weights in the context of mixed Hodge structures and $\ell$-adic cohomology. 

We will now show that, most remarkably, these pure pieces can be assembled into a complex called \emph{weight complex}. If the category $\Cc$ admits some enhancement the weight complex gives a well-defined element in the homotopy category of $\Cc^{w=0}$ in a \emph{functorial way}.

We will use the following description of weight exact functors due to Sosnilo, see \cite[Proposition 3.3(b)]{sosnilo_theorem_2017}.
\begin{proposition}\label{prop:functorsbytheheart}
Let $\Ccc$ and $\Ddd$ be stable $\infty$-categories. Assume that their homotopy categories $\Cc=\h\Ccc$ and $\Dd=\h\Ddd$ are equipped with bounded weight structures. The hearts $\Ccc^{w=0}$ and $\Ddd^{w=0}$ are the full subcategories of $\Ccc$ and $\Ddd$ consisting of all objects in $\Cc^{w=0}$ and $\Dd^{w=0},$ respectively. Then restriction gives an equivalence of categories 
$$\Res:\Fun^{\operatorname{w-ex}}(\Ccc,\Ddd)\to\Fun^{\operatorname{add}}(\Ccc^{w=0},\Ddd^{w=0})$$
between the $\infty$-categories of exact functors from $\Ccc$ to $\Ddd$ that induce weight exact functors from $\Cc$ to $\Dd$ and of additive functors between the hearts $\Ccc^{w=0}$ and $\Ddd^{w=0}.$
\end{proposition}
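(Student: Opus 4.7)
The plan is to establish the equivalence by showing the restriction $\infty$-functor $\Res$ is both fully faithful and essentially surjective. Both steps crucially exploit the boundedness of the weight structures: every object of $\Ccc$ admits a finite weight tower whose graded pieces lie in shifts of the heart $\Ccc^{w=0}$ (the $\infty$-categorical analogue of \cite[Proposition 1.5.6]{bondarko_weight_2010}), so weight-exact functors and their natural transformations should be controlled by their restrictions to the heart, modulo obstructions that vanish by the semiorthogonality axiom (2) of Definition~\ref{def:weightstructure}.

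For fully faithfulness I would fix weight-exact functors $F, G: \Ccc \to \Ddd$ and show that the induced map on mapping spaces
\begin{equation*}
\operatorname{Map}_{\Fun^{\operatorname{w-ex}}(\Ccc,\Ddd)}(F, G) \longrightarrow \operatorname{Map}_{\Fun^{\operatorname{add}}(\Ccc^{w=0},\Ddd^{w=0})}(F|_{\Ccc^{w=0}}, G|_{\Ccc^{w=0}})
\end{equation*}
is an equivalence. Given a natural transformation on the heart, I would extend it along the weight tower of each object $X$ by induction on length: at each step a cofiber sequence $A \to X \to B$ with $A \in \Ccc^{w\geq 1}$, $B \in \Ccc^{w \leq 0}$ is mapped by $F$ and $G$ to cofiber sequences in $\Ddd$ preserving these weight bounds, and the space of extensions of a compatible pair $(\alpha_A, \alpha_B)$ to $\alpha_X$ is a fiber of a map of mapping spaces between objects of strictly different weights. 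Such spaces are contractible by semiorthogonality, so the extension exists and is essentially unique.

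For essential surjectivity, given an additive $F_0: \Ccc^{w=0} \to \Ddd^{w=0}$ I would construct an exact lift $F: \Ccc \to \Ddd$ by applying $F_0$ to the heart pieces of a weight tower and reassembling via cofiber sequences in $\Ddd$. Rather than carrying out this recipe pointwise, which would require coherent simultaneous choices of weight towers for all objects, I would use a universal property: form the free stable $\infty$-category $\mathcal{S}(\Ccc^{w=0})$ on the additive $\infty$-category $\Ccc^{w=0}$, characterised so that additive functors from $\Ccc^{w=0}$ extend uniquely to exact functors out of $\mathcal{S}(\Ccc^{w=0})$. The inclusion $\Ccc^{w=0} \hookrightarrow \Ccc$ then extends to an exact functor $\Phi: \mathcal{S}(\Ccc^{w=0}) \to \Ccc$ which is essentially surjective by the generation assumption (from boundedness) and fully faithful by the same semiorthogonality argument as above, hence an equivalence. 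Composing the canonical extension of $F_0$ to $\mathcal{S}(\Ccc^{w=0}) \to \Ddd$ with a quasi-inverse of $\Phi$ yields the desired weight-exact lift.

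The main obstacle is promoting the construction from the 1-categorical level, where weight towers are only unique up to non-canonical isomorphism, to a genuinely functorial $\infty$-categorical statement. The universal-property approach is essential here: it sidesteps the need to glue incoherent choices by reducing the problem to showing that a single canonical comparison functor $\Phi$ is an equivalence, which is then controlled uniformly by the semiorthogonality of the weight structure.
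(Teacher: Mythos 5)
The paper does not prove this statement itself; it is imported verbatim from Sosnilo~\cite[Proposition 3.3(b)]{sosnilo_theorem_2017}, so what matters is whether your sketch is actually complete on its own terms. Your high-level strategy---reduce everything to showing that the canonical comparison functor $\Phi\colon \mathcal{S}(\Ccc^{w=0})\to\Ccc$ out of the free stable $\infty$-category on the heart is an equivalence, then read off the statement from the universal property of $\mathcal{S}(-)$---is a reasonable route and may well be close in spirit to what Sosnilo does. The problem is that the crux of the argument, full faithfulness of $\Phi$, is where all the difficulty lives, and your justification (``fully faithful by the same semiorthogonality argument as above'') does not carry the weight it is asked to carry. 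Semiorthogonality of the weight structure on $\Ccc$ alone cannot give this: you additionally need (i) that the canonical functor $\Ccc^{w=0}\to\mathcal{S}(\Ccc^{w=0})$ is fully faithful at the level of mapping \emph{spectra}, not merely that it induces a bijection on $\pi_0$---this is exactly what lets the dévissage start, because the positive homotopy groups of $\operatorname{Map}_{\Ccc}(X_0,Y_0)$ for heart objects encode precisely the ``forward'' extensions that are not killed by axiom~(2) of Definition~\ref{def:weightstructure}---and (ii) that $\mathcal{S}(\Ccc^{w=0})$ itself carries a bounded weight structure whose heart is $\Ccc^{w=0}$, so that weight towers exist on the $\mathcal{S}$ side to dévissé against. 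Both are substantive statements about the free-stable construction, not consequences of the weight structure on $\Ccc$, and neither is addressed; without them the ``dévissage'' invoked has nothing to devissage.

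Two further points. First, even granting $\Phi$ is an equivalence of stable $\infty$-categories, you still have to match $\Fun^{\operatorname{w-ex}}(\Ccc,\Ddd)$ with $\Fun^{\operatorname{add}}(\Ccc^{w=0},\Ddd^{w=0})$ rather than $\Fun^{\operatorname{ex}}(\Ccc,\Ddd)$ with $\Fun^{\operatorname{add}}(\Ccc^{w=0},\Ddd)$; one must check (by a weight-tower argument) that an exact functor is weight-exact iff it carries $\Ccc^{w=0}$ into $\Ddd^{w=0}$, and that this correspondence is respected by the universal-property equivalence. This is true but needs to be said. Second, the separate full faithfulness argument for $\Res$ you sketch beforehand is not actually repaired by the later universal-property move: you diagnose the coherence problem correctly (weight towers are only non-canonically defined, so pointwise extension of a natural transformation is not obviously a functorial $\infty$-categorical procedure), but then only apply the universal-property fix to essential surjectivity. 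If $\Phi$ is an equivalence the universal property of $\mathcal{S}$ gives full faithfulness of $\Res$ for free, making your separate induction redundant; as written, though, that separate induction has an unresolved gap and the one argument that would close it is deferred elsewhere. I would recommend either dropping the separate full-faithfulness argument entirely and deriving everything from the equivalence of $\Phi$, or carefully establishing the properties of $\mathcal{S}(-)$ (existence, full faithfulness of the unit, weight structure on $\mathcal{S}$) with a reference before relying on them.
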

The result can be used to construct a weight complex functor, see \cite[Corollary 3.5]{sosnilo_theorem_2017}:
\begin{proposition} \label{prop:weightcomplexfunctor} Let $\Cc$ be an idempotent complete triangulated category with bounded weight structure $w$. Assume that $\Cc=\h\Ccc$ arises as the homotopy category of a  stable $\infty$-category $\Ccc$. Then there is a functor of triangulated categories called \emph{weight complex functor}
\begin{align}
    t:\Cc\to\K^b(\Cc^{w=0})
\end{align}
that restricts to the natural embedding $\Cc^{w=0}\to \K^b(\Cc^{w=0})$ into degree $0.$
\end{proposition}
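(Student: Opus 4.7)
The plan is to deduce this directly from Proposition~\ref{prop:functorsbytheheart} by supplying a suitable stable $\infty$-enhancement of the target $\K^b(\Cc^{w=0})$ and then transporting the identity functor on the heart. The target of the desired functor is naturally equipped with the stupid filtration weight structure from Example~\ref{exp:exampleweightstructures}(1), whose heart is canonically $\Cc^{w=0}$, so the setup of Proposition~\ref{prop:functorsbytheheart} applies once an enhancement is in place.

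First, I would construct a stable $\infty$-category $\mathcal{K}$ with $\h\mathcal{K} \cong \K^b(\Cc^{w=0})$. A clean way is to take the dg-nerve of the dg-category of bounded chain complexes in the additive category $\Cc^{w=0}$; alternatively, one can form $\mathcal{K}$ directly within $\Ccc$ by taking the smallest stable subcategory containing $\Cc^{w=0}$ and using that the heart is additive and idempotent-closed. Either construction yields a stable $\infty$-category whose homotopy category carries the stupid filtration weight structure, whose heart, as an additive $\infty$-subcategory of $\mathcal{K}$, identifies with $\Cc^{w=0}$ viewed as complexes concentrated in degree $0$.

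Second, I would invoke Proposition~\ref{prop:functorsbytheheart} with $(\Ccc,\Ddd) = (\Ccc,\mathcal{K})$. The identity functor $\id\colon \Cc^{w=0}\to \mathcal{K}^{w=0} = \Cc^{w=0}$ is additive, so it is the image under $\Res$ of a unique (up to equivalence) exact functor $T\colon \Ccc\to \mathcal{K}$ that is weight exact on homotopy categories. Passing to homotopy categories yields a triangulated functor
\begin{equation*}
    t\colon \Cc \longrightarrow \K^b(\Cc^{w=0}),
\end{equation*}
which is the sought-for weight complex functor. The claimed restriction to the natural embedding on $\Cc^{w=0}$ is built in, since the restriction of $t$ to the heart is, by construction, the identity followed by the inclusion of degree-zero complexes.

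The main obstacle is really the enhancement step: one must know that $\K^b(\Cc^{w=0})$ admits a stable $\infty$-enhancement whose induced weight structure coincides with the stupid filtration and whose heart matches $\Cc^{w=0}$ compatibly with the heart of $\Ccc$. Once this mild technical point is settled, the rest of the construction is a formal application of Sosnilo's Proposition~\ref{prop:functorsbytheheart}, and the boundedness of $w$ guarantees that $t$ lands in bounded rather than unbounded complexes. The functoriality and triangulated structure of $t$ come for free from the $\infty$-categorical lift $T$.
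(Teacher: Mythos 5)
Your proposal follows the paper's own proof essentially verbatim: both take the target enhancement $\Ddd$ to be the dg-nerve $\Nerve\Ch^b(\Cc^{w=0})$ of bounded chain complexes in the heart, equip $\h\Ddd = \K^b(\Cc^{w=0})$ with the stupid-filtration weight structure from Example~\ref{exp:exampleweightstructures}(1), and lift the canonical additive functor between hearts through the equivalence $\Res$ of Proposition~\ref{prop:functorsbytheheart}. One caveat: the parenthetical alternative of taking $\mathcal{K}$ to be ``the smallest stable subcategory of $\Ccc$ containing $\Cc^{w=0}$'' does not work --- since $w$ is bounded, the heart generates $\Cc$ as a triangulated category (\cite[Corollary 1.5.7]{bondarko_weight_2010}), so this stable subcategory is all of $\Ccc$ rather than an enhancement of $\K^b(\Cc^{w=0})$; the dg-nerve construction you list first is the one that succeeds and is what the paper uses.
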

\begin{proof}
We apply Proposition~\ref{prop:functorsbytheheart} to $\Ccc$ and $\Ddd=\Nerve\Ch^b(\Cc^{w=0}),$ the stable $\infty$-category of bounded chain complexes with values in $\Cc^{w=0}$ from \cite[Example 4.4.5.1]{lurie_higher_2009} and \cite[Section 1.3.1]{lurie_higher_2012}.
The homotopy category  $\h\Ddd=\K^b(\Cc^{w=0})$ of $\Ddd$ is the bounded homotopy category of chain complexes in $\Cc^{w=0}$ and equipped with the canonical weight structure with heart $\Cc^{w=0},$ see Example~~\ref{exp:exampleweightstructures}(1). The heart of the $\infty$-category $\Ddd$ is $\Ddd^{w=0}=\Nerve(\Cc^{w=0}),$ the nerve of $\Cc^{w=0}.$ 

The unit of the adjunction between the nerve and homotopy category functors yields a functor $\epsilon:\Ccc^{w=0}\to\Ddd^{w=0}=\Nerve(\h\Ccc^{w=0}).$ The \emph{$\infty$-categorical weight complex functor}
$t_\infty: \Ccc\to \Ddd=\Nerve\Ch^b(\Cc^{w=0})$
is the unique (up to equivalence) weight exact functor such that $\Res(t_\infty)=\epsilon,$ where $\Res$ is the functor defined in Proposition ~~\ref{prop:functorsbytheheart}.
On the homotopy categories, $t_\infty$ induces the weight complex functor~$t$.
\end{proof}
\begin{remark}\label{rem:enhancements}
The weight complex functor also exists and admits an explicit construction if $\Cc$ admits an enhancement as an $f$-category, see \cite{bondarko_weight_2010} and \cite{schnurer_homotopy_2011}, or as a differential graded category, see  \cite{bondarko_differential_2009}. Moreover it also exists  if $\Cc$ admits an enhancement as a stable derivator using the fact that stable derivators yield $f$-categories, see \cite{modoi_reasonable_2019}.
\end{remark}
\begin{example} As an application we obtain a weight complex functor for the Chow weight structure
$$t:\DM_{gm}(k,\Q)\to\K^b(\operatorname{Chow}(k,\Q)),$$
see Example~\ref{exp:exampleweightstructures}(3). Here we use that $\DM_{gm}(k,\Q)$ has an enhancement as a stable $\infty$-category.
\end{example}
\begin{remark}
The weight complex functor for $\DM_{gm}(k,\Q)$ allows to decompose the motive $\Mot(X)$ of any (not necessarily smooth or projective) variety $X$ into a complex of motives of smooth projective varieties. Again, this reflects Deligne's yoga of weights for mixed Hodge structures and $\ell$-adic cohomology: the cohomology of a variety admits a weight filtration whose graded pieces behave like the cohomology of smooth projective varieties.
Similarly to Deligne's approach, the existence of the Chow weight structure and the weight complex functor relies on resolutions of singularities or de Jong's and Gabber's theory of alterations, see \cite{bondarko_1p-motivic_2011}.
\end{remark}
Sosnilo \cite[Corollary 3.5]{sosnilo_theorem_2017} and Aoki \cite[Theorem 4.3]{aoki_weight_2020} show that the weight complex functor is compatible with weight exact functors and also with symmetric monoidal structures as follows.
\begin{proposition}\label{prop:weightcomplexcommute}
Let $\Cc$ and $\Dd$ be triangulated categories with bounded weight structures and $F:\Cc\to\Dd$ be a weight exact functor. Assume that there is an enhancement to a functor $F: \Ccc\to \Ddd$ between stable $\infty$-categories. Then
\begin{enumerate}
    \item The following diagram of functors commutes up to natural isomorphism
\begin{center}
\begin{tikzcd}
\Cc \arrow[d, "t"] \arrow[r, "F"]    & \Dd \arrow[d, "t"] \\
\K^b(\Cc^{w=0}) \arrow[r, "\K^b(F)"] & \K^b(\Dd^{w=0}).  
\end{tikzcd}
\end{center}
\item  If $\Ccc$ is symmetric monoidal and $\Cc^{w\geq 0}$and $\Cc^{w\leq 0}$ are closed with respect to the monoidal structures then the weight complex functor can be turned into a symmetric monoidal functor.
\end{enumerate}
\end{proposition}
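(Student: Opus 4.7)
The plan is to derive both statements from the universal property of the weight complex functor supplied by Proposition~\ref{prop:functorsbytheheart}. The strategy in each case is to interpret the two functors to be compared as weight exact functors with a common $\infty$-categorical enhancement and then verify that their restrictions to the heart agree, where everything reduces to an essentially tautological identification. First I would fix $\infty$-categorical enhancements $t_\infty^{\Cc}: \Ccc \to \Nerve\Ch^b(\Cc^{w=0})$ and $t_\infty^{\Dd}: \Ddd \to \Nerve\Ch^b(\Dd^{w=0})$ as in the proof of Proposition~\ref{prop:weightcomplexfunctor}, together with the units $\epsilon^{\Cc}: \Ccc^{w=0} \to \Nerve(\Cc^{w=0})$ and $\epsilon^{\Dd}: \Ddd^{w=0} \to \Nerve(\Dd^{w=0})$.

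For part (1), the weight exactness of $F$ gives a restriction $F|_{w=0}: \Ccc^{w=0} \to \Ddd^{w=0}$, and the induced functor $\Ch^b(F|_{w=0}): \Nerve\Ch^b(\Cc^{w=0}) \to \Nerve\Ch^b(\Dd^{w=0})$ is manifestly exact and weight exact for the canonical weight structures of Example~\ref{exp:exampleweightstructures}(1). Hence both composites
\[
t_\infty^{\Dd} \circ F, \qquad \Ch^b(F|_{w=0}) \circ t_\infty^{\Cc} \;:\; \Ccc \longrightarrow \Nerve\Ch^b(\Dd^{w=0})
\]
are weight exact functors between stable $\infty$-categories with bounded weight structures. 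By Proposition~\ref{prop:functorsbytheheart}, they are equivalent as soon as their restrictions to $\Ccc^{w=0}$ are equivalent as additive functors. Both restrictions compute $\epsilon^{\Dd} \circ F|_{w=0}$, as follows from the identity $\Res(t_\infty^{\Dd}) = \epsilon^{\Dd}$ and the fact that the enhancement of $\Ch^b(F|_{w=0})$ restricts on degree~$0$ complexes to $F|_{w=0}$ itself. Passing to homotopy categories gives the desired commutative square up to natural isomorphism.

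For part (2), the closure assumption on $\Cc^{w\leq 0}$ and $\Cc^{w\geq 0}$ under the monoidal product guarantees that $\Ccc^{w=0}$ is closed under the symmetric monoidal structure of $\Ccc$, so that $\Nerve\Ch^b(\Cc^{w=0})$ inherits a natural symmetric monoidal structure via the pointwise tensor product of complexes, and $\epsilon^{\Cc}$ is symmetric monoidal. The plan is then to invoke the symmetric monoidal refinement of Proposition~\ref{prop:functorsbytheheart} due to Aoki, which states that $\Res$ lifts to an equivalence between the $\infty$-categories of symmetric monoidal weight exact functors and symmetric monoidal additive functors between the hearts. Applying this to the identity functor on $\Ccc^{w=0}$ shows that $t_\infty^{\Cc}$ itself admits an essentially unique lift to a symmetric monoidal functor, which descends to a symmetric monoidal triangulated weight complex functor $t$.

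I expect the main obstacle to lie in part~(2): the passage from the additive/weight exact comparison to the symmetric monoidal one is not a formal consequence of Proposition~\ref{prop:functorsbytheheart} alone, and requires invoking the genuinely $\infty$-operadic arguments of Aoki to ensure the coherence data of the symmetric monoidal structure extend from the heart to the whole category. Part~(1) by contrast is essentially forced by the universal property of $t_\infty$ and presents no real difficulty beyond bookkeeping the enhancements.
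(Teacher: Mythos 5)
The paper does not supply its own proof of this proposition; it simply cites Sosnilo's Corollary~3.5 and Aoki's Theorem~4.3. Your argument is a faithful reconstruction of the argument those references intend, and it is correct. The key move in part~(1)---realising both $t_\infty^{\Dd}\circ F$ and $\Ch^b(F|_{w=0})\circ t_\infty^{\Cc}$ as weight exact functors $\Ccc\to\Nerve\Ch^b(\Dd^{w=0})$, then using the equivalence $\Res$ of Proposition~\ref{prop:functorsbytheheart} to reduce the comparison to the restrictions to the heart, where both compute $\epsilon^{\Dd}\circ F|_{w=0}$ by naturality of the unit of the nerve--homotopy-category adjunction---is exactly the content of Sosnilo's corollary, and you identify correctly which verifications are tautological. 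For part~(2) you rightly observe that $\Cc^{w=0}$ inherits a symmetric monoidal structure from the closure hypotheses (one should also note the unit lies in $\Cc^{w=0}$, but this is implicit in ``closed with respect to the monoidal structure''), and that the remaining coherence data genuinely requires the $\infty$-operadic refinement of the restriction equivalence, which is Aoki's contribution; you correctly flag that this is not a formal consequence of Proposition~\ref{prop:functorsbytheheart} alone. One minor terminological slip: the symmetric monoidal structure on $\Ch^b(\Cc^{w=0})$ is the usual tensor product of complexes, $(C\otimes D)^n=\bigoplus_{i+j=n}C^i\otimes D^j$, not a degreewise/pointwise product, but this does not affect the argument.
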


The weight complex functor reflects isomorphisms, but is not necessarily an equivalence. For example, the heart $\Cc^{w=0}$ is clearly tilting in $\K^b(\Cc^{w=0})$ while in general it is just negative in $\Cc.$
This is the main obstruction as the following result shows.
\begin{proposition}\label{prop:weightcomplexfunctorequivalence}
In the situation of Proposition~\ref{prop:weightcomplexfunctor}, assume that $\Cc^{w=0}$ is tilting in $\Cc$. Then the weight complex functor $t$ is an equivalence of categories.
\end{proposition}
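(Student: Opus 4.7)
The plan is to show that $t$ is both fully faithful and essentially surjective, making essential use of the tilting hypothesis only to verify fully faithfulness on the heart; the rest is formal dévissage along the weight structure.

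First I would verify that $t$ is fully faithful on objects of $\Cc^{w=0}$. For $X,Y \in \Cc^{w=0}$, the tilting assumption gives
\[
\Hom_{\Cc}(X,Y[n]) = 0 \text{ for all } n \neq 0,
\]
and $\Hom_{\Cc}(X,Y) = \Hom_{\Cc^{w=0}}(X,Y)$. On the other side, since the heart of $\K^b(\Cc^{w=0})$ is precisely $\Cc^{w=0}$ (placed in degree $0$), the analogous vanishing and identification hold in $\K^b(\Cc^{w=0})$. Because $t$ restricts to the natural embedding $\Cc^{w=0} \hookrightarrow \K^b(\Cc^{w=0})$ by Proposition~\ref{prop:weightcomplexfunctor}, the induced map $\Hom_\Cc(X,Y[n]) \to \Hom_{\K^b(\Cc^{w=0})}(tX,tY[n])$ is the canonical identification on the heart and zero otherwise; in particular it is an isomorphism for all $n$.

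Next I would propagate fully faithfulness from the heart to all of $\Cc$ by dévissage. For fixed $X \in \Cc^{w=0}$, let $\Dd_X \subseteq \Cc$ be the full subcategory of objects $Y$ such that the map
\[
\Hom_{\Cc}(X,Y[n]) \longrightarrow \Hom_{\K^b(\Cc^{w=0})}(tX,tY[n])
\]
is an isomorphism for every $n \in \Z$. Since $t$ is triangulated, the five-lemma shows that $\Dd_X$ is closed under cones and shifts, and additivity together with idempotent completeness (of both $\Cc$ and $\K^b(\Cc^{w=0})$, the latter by \cite{schnurerHomotopyCategoriesIdempotent2011a}) shows it is closed under direct summands and isomorphisms. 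By Step~1 it contains $\Cc^{w=0}$, and by boundedness of $w$ one has $\Cc = \langle \Cc^{w=0}\rangle_{\cong,\inplus,\Delta}$, so $\Dd_X = \Cc$. Repeating the argument with $Y$ fixed and $X$ varying over $\Cc$ shows that $t$ is fully faithful on the whole category.

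Finally, essential surjectivity: the essential image of $t$ is a triangulated subcategory closed under direct summands (again using idempotent completeness) that contains $\Cc^{w=0}$. Since $\K^b(\Cc^{w=0}) = \langle \Cc^{w=0} \rangle_{\cong,\inplus,\Delta}$ — the heart generates its bounded homotopy category under triangles, shifts, summands, and isomorphisms — the essential image is all of $\K^b(\Cc^{w=0})$, and $t$ is an equivalence. The only place the tilting hypothesis enters is Step~1; the rest is formal manipulation of triangulated generation, and the main technical care is in recording that both sides are idempotent complete so that $\Dd_X$ really is a thick triangulated subcategory.
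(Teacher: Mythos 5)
Your proposal is correct and follows essentially the same route as the paper: use the tilting hypothesis to get fully faithfulness on the heart (and its shifts), then extend to all of $\Cc$ by dévissage and the five-lemma, and obtain essential surjectivity by triangulated generation. Your version just spells out the two-variable dévissage and the idempotent-completeness bookkeeping that the paper leaves implicit.
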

\begin{proof}
Clearly, $t$ is fully faithful when restricted to $\Cc^{w=0}.$
Since $\Cc^{w=0}$ is tilting, $t$ is also fully faithful when restricted to $\bigcup_n\Cc^{w=n}.$ Now $\Cc^{w=0}$ generates $\Cc$ as triangulated subcategory since $w$ is bounded, see \cite[Corollary 1.5.7]{bondarko_weight_2010}. Hence $t$ is fully faithful on $\Cc$ by induction (dévissage) using the long exact sequence of $\Hom$-groups for distinguished triangles and the $5$-lemma. Essential surjectivity follows from dévissage as well since $\Cc^{w=0}$ generates $\K^b(\Cc^{w=0})$ as triangulated subcategory.
\end{proof}
\begin{corollary}\label{cor:triangulatedcatequalshomotopycat}
Let $\Cc$ be an idempotent-closed triangulated category that admits an enhancement as in Proposition~\ref{prop:weightcomplexfunctor} or Remark~\ref{rem:enhancements}. Let $\Tt$ be a collection of objects in $\Cc$ which is tilting. Then there is an equivalence of categories
$$t:\langle \Tt \rangle_{\cong, \inplus, \Delta}\to\K^b(\langle \Tt \rangle_{\cong, \inplus, \oplus})$$
between the full subcategory of $\Cc$ generated by $\Tt$ under isomorphisms, direct summands and triangles and the bounded homotopy category of the category generated  by $\Tt$ under isomorphisms, direct summands and finite direct sums.
\end{corollary}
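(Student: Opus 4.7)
The plan is to bootstrap the three earlier propositions. Set $\Cc' := \langle \Tt \rangle_{\cong, \inplus, \Delta}$ and $\Aa := \langle \Tt \rangle_{\cong, \inplus, \oplus}$. As a first step, I would note that $\Tt$ being tilting makes it in particular negative, and that $\Cc'$ is idempotent closed by construction and generated by $\Tt$ under isomorphisms, summands and triangles. Proposition~\ref{prop:defineweightstructure} therefore produces a bounded weight structure on $\Cc'$ whose heart is precisely $\Aa$.

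Next, I would argue that the enhancement of $\Cc$ descends to an enhancement of $\Cc'$. For the stable $\infty$-categorical enhancement $\Ccc$, take the full $\infty$-subcategory $\Ccc'$ of $\Ccc$ spanned by objects lying in $\Cc'$; since $\Cc'$ is closed under shifts and cones, $\Ccc'$ is a stable $\infty$-subcategory with homotopy category $\Cc'$. The analogous restriction works for the $f$-category, dg, and stable derivator enhancements listed in Remark~\ref{rem:enhancements}. This supplies the input for Proposition~\ref{prop:weightcomplexfunctor}, which then produces a weight complex functor $t : \Cc' \to \K^b(\Aa)$ that restricts to the identity on the heart.

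The final step is to invoke Proposition~\ref{prop:weightcomplexfunctorequivalence}, which requires that $\Aa$ be tilting inside $\Cc'$. The vanishing $\Hom_{\Cc'}(M,N[n]) = 0$ for $n \neq 0$ is assumed for $M, N \in \Tt$, and it is preserved under taking finite direct sums (by biadditivity of $\Hom$) and under passage to direct summands (summands of a zero group are zero). Hence the tilting condition propagates from $\Tt$ to $\Aa$, and $t$ becomes an equivalence by Proposition~\ref{prop:weightcomplexfunctorequivalence}.

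The only point that requires genuine care is the descent of the chosen enhancement to $\Cc'$, since one must confirm that the relevant ambient structure (stable $\infty$-category, $f$-category, dg, or derivator) actually restricts to the generated subcategory; everything else is a direct composition of Propositions~\ref{prop:defineweightstructure}, \ref{prop:weightcomplexfunctor}, and \ref{prop:weightcomplexfunctorequivalence}.
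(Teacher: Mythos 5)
Your proof is correct and follows the route the paper has set up: apply Proposition~\ref{prop:defineweightstructure} (using that tilting implies negative) to get a bounded weight structure on $\langle\Tt\rangle_{\cong,\inplus,\Delta}$ with heart $\langle\Tt\rangle_{\cong,\inplus,\oplus}$, restrict the enhancement to this full subcategory (which is stable since it is closed under shifts and cones, and idempotent-closed since it is closed under summands in an idempotent-closed ambient category), obtain the weight complex functor from Proposition~\ref{prop:weightcomplexfunctor}, and conclude via Proposition~\ref{prop:weightcomplexfunctorequivalence} after noting that the tilting condition propagates from $\Tt$ to its additive hull. This is exactly the intended chain of reasoning; the paper leaves the proof implicit precisely because it is this direct composition of the three preceding results.
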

\subsection{Locally unital algebras}\label{sec:locallyunital}
It is sometimes convenient to pass from additive categories, such as the heart of a weight structure $\Cc^{w=0},$ to categories of modules over some algebra with idempotents.
We use this perspective to prove a small variation on Corollary~\ref{cor:triangulatedcatequalshomotopycat}.
\begin{definition} A (non-unital) algebra $A$ with a distinguished set of idempotents $\{e_i|i\in I\}$ is called \emph{locally unital} if the canonical map
$$\bigoplus_{i,j\in I}e_iAe_j\to A$$
is an isomorphism. A right module over a locally unital algebra $A$ is called \emph{finitely generated (projective)} if it is isomorphic to a quotient (direct summand) of a finite direct sum of the modules $e_i A$ for $i\in I.$
The \emph{perfect derived category} of $A$ is the bounded homotopy category of the finitely generated projective right modules
$$\Dperf(A)=\K^b(\operatorname{mod_{fgp}-}A).$$

If $A$ is moreover $\Z$-graded we consider the category $\operatorname{mod}^\Z\operatorname{-}A$ of graded right modules over $A$ with morphisms of degree $0$ and the graded perfect derived category
$$\DperfZ(A)=\K^b(\operatorname{mod}^\Z_{\operatorname{fgp}}\operatorname{-}A).$$
\end{definition}
For a family of objects $\Tt$ in an idempotent-closed additive category $\Cc$ we consider the locally unital algebra with the distinguished idempotents $e_M=\id_M$ for $M\in \Tt$
\begin{align}
    \End_\Cc(\Tt)=\bigoplus_{M,N\in \Tt}\Hom_\Cc(M,N).\label{eqn:defendt}
\end{align}

By general nonsense we obtain:
\begin{proposition} The functor
$$\bigoplus_{M\in \Tt}\Hom_\Cc(M,-): \langle \Tt \rangle_{\cong, \inplus, \oplus} \to \operatorname{mod_{fgp}-}\End_\Cc(\Tt)$$
is an equivalence of categories.
\end{proposition}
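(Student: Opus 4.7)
The plan is to verify the equivalence in three stages: identify $F(M)$ for $M \in \Tt$ with the distinguished projective generators of the target category; deduce full faithfulness on Hom-spaces out of $\Tt$ by a Yoneda-type computation; and then bootstrap to a full equivalence using additivity and idempotent splitting.

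First I fix conventions so that the product on $A := \End_\Cc(\Tt)$ is ordinary composition (so that $e_M A e_N = \Hom_\Cc(N,M)$) and so that the right $A$-action on $F(X) := \bigoplus_{M \in \Tt}\Hom_\Cc(M,X)$ is given by precomposition, namely $f \cdot g := f\circ g$ whenever this makes sense in $\Cc$. A direct computation then yields a canonical isomorphism $F(M) \cong e_M A$ of right $A$-modules for each $M \in \Tt$, under which the summand $\Hom_\Cc(N,M)$ on the left corresponds to $e_M \cdot \Hom_\Cc(N,M) \subset e_M A$ on the right. In particular $F$ lands in $\operatorname{mod_{fgp}}\text{-}A$.

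Next, using the standard identification $\Hom_A(e_M A, N)\cong N\cdot e_M$ given by $\varphi\mapsto\varphi(e_M)$ for any right $A$-module $N$, together with the computation $F(X)\cdot e_M=\Hom_\Cc(M,X)$, I obtain a natural bijection
\[
\Hom_\Cc(M,X)\;\cong\;\Hom_A(F(M),F(X))
\]
which one immediately checks is realized by $F$ itself. Extending bi-additively in the first variable shows that $F$ is fully faithful on every Hom-space $\Hom_\Cc(X,Y)$ with $X$ a finite direct sum of objects of $\Tt$.

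Finally, idempotent-closedness of both source and target upgrades this partial full faithfulness to the whole of $\langle\Tt\rangle_{\cong,\inplus,\oplus}$: any summand of a finite sum of objects of $\Tt$ corresponds under the fully faithful $F$ to a splitting idempotent on its image, and conversely any idempotent in $F(Y)$ lifts uniquely to $\End_\Cc(Y)$ and splits in $\Cc$. This gives full faithfulness everywhere, and essential surjectivity onto $\operatorname{mod_{fgp}}\text{-}A$ is then immediate since this category is by definition generated under finite sums and summands by the modules $e_M A = F(M)$. The only genuine subtlety in this "general nonsense" is bookkeeping of left/right conventions so that $F(M)$ matches $e_M A$ rather than $A e_M$; beyond that no technical obstacle arises.
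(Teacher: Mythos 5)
Your argument is correct and is precisely the ``general nonsense'' the paper alludes to without spelling out: identify $F(M)\cong e_M A$, use the Yoneda isomorphism $\Hom_A(e_MA,N)\cong Ne_M$ to get full faithfulness on $\Tt$, extend by additivity to finite sums, and use idempotent-closedness of $\Cc$ on the one hand and the definition of $\operatorname{mod_{fgp}}$ on the other to pass to direct summands and obtain both full faithfulness and essential surjectivity. The only thing I would tighten is the last step: you write ``any idempotent in $F(Y)$ lifts uniquely,'' where you mean an idempotent in $\End_A(F(Y))$; and essential surjectivity is not ``immediate'' merely because the target is generated by the $F(M)$ under sums and summands---one needs to lift the splitting idempotent back to $\Cc$ via the already-established full faithfulness and then split it there using idempotent-closedness of $\Cc$, which you do say, so the content is present, just ordered slightly loosely.
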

 For a graded version, assume that $\Cc$ is equipped with an autoequivalence $\langle 1\rangle.$
Then one can define the $\Z$-graded locally unital algebra
\begin{align}\label{eq:gradedend}
\End_\Cc^\bullet(\Tt)=\bigoplus_{M,N\in \Tt} \Hom^\bullet_\Cc(M,N) \text{ where }\Hom^n_\Cc(M,N)=\Hom_\Cc(M,N\langle n\rangle).
\end{align}
Again, general nonsense yields
\begin{proposition}\label{prop:gradedcatasmodules} The functor
$$\bigoplus_{M\in \Tt}\Hom^\bullet_\Cc(M,-): \langle \Tt \rangle_{\cong, \inplus, \oplus,\langle \pm 1\rangle} \to \operatorname{mod}^\Z_{\operatorname{fgp}}\operatorname{-}\End_\Cc^\bullet(\Tt)$$
is an equivalence of categories.
\end{proposition}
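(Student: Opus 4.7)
The plan is to reduce to the ungraded statement preceding it via a uniform Yoneda/Morita-style argument, treating the autoequivalence $\langle 1\rangle$ as a $\Z$-grading on generators. Write $A = \End^\bullet_\Cc(\Tt)$ and $F = \bigoplus_{M\in\Tt}\Hom^\bullet_\Cc(M,-)$ for the functor in question. First I would verify well-definedness: for $M\in\Tt$, the module $F(M)$ tautologically equals the projective graded right module $e_M A$, and the identification $\Hom^n_\Cc(M, N\langle k\rangle) = \Hom^{n+k}_\Cc(M, N)$, coming from $\langle k\rangle$ being an autoequivalence, gives canonical isomorphisms $F(M\langle k\rangle)\cong F(M)\langle k\rangle$ in $\operatorname{mod}^\Z\operatorname{-}A$; preservation of direct sums and summands is immediate from the definition of $F$. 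Hence $F$ restricts to a functor between the stated categories.

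Next, for full faithfulness I would start with the generators $M\langle m\rangle, N\langle n\rangle$ with $M,N\in\Tt$ and $m,n\in\Z$, and invoke graded Yoneda. A degree-zero morphism $(e_M A)\langle m\rangle \to (e_N A)\langle n\rangle$ is determined by the image of the distinguished element $e_M$, which must lie in the appropriate graded component of $e_N A e_M = \Hom^\bullet_\Cc(M, N)$, namely $\Hom_\Cc(M, N\langle n-m\rangle)$. This last space equals $\Hom_\Cc(M\langle m\rangle, N\langle n\rangle)$ via the autoequivalence $\langle m\rangle$, and unwinding definitions shows the resulting bijection is precisely the map induced by $F$. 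Extending full faithfulness to all of $\langle\Tt\rangle_{\cong,\inplus,\oplus,\langle\pm 1\rangle}$ then follows by additivity of $F$, the splitting of Hom over finite direct sums, and compatibility with direct summands for projective graded modules.

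For essential surjectivity, every object of $\operatorname{mod}^\Z_{\operatorname{fgp}}\operatorname{-}A$ is, by definition, a direct summand of a finite direct sum of shifted representables $(e_M A)\langle n\rangle = F(M\langle n\rangle)$, and the source category $\langle\Tt\rangle_{\cong,\inplus,\oplus,\langle\pm 1\rangle}$ is by construction closed under these operations, so a preimage always exists. The whole argument is entirely parallel to the ungraded proposition and I do not anticipate any genuine obstacle. The only delicate point — really a matter of bookkeeping rather than mathematics — is fixing conventions consistently: which side of $e_M A e_N$ corresponds to $\Hom_\Cc(M,N)$ versus $\Hom_\Cc(N,M)$, and the sign convention for the shift $\langle k\rangle$ on graded modules, must be tracked carefully so that the graded Yoneda identification agrees on the nose with the map induced by $F$.
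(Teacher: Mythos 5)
Your proof is correct and simply spells out the graded Yoneda/Morita "general nonsense" that the paper invokes without elaboration immediately before the proposition. The argument — well-definedness via $F(M)\cong e_M A$ and $F(M\langle k\rangle)\cong F(M)\langle\mp k\rangle$, full faithfulness on shifted generators via graded Yoneda, extension by additivity, and essential surjectivity since $\operatorname{mod}^\Z_{\operatorname{fgp}}\text{-}A$ is generated by the shifted $e_M A$ — is exactly the standard proof the paper intends, and your flag about tracking the left/right and shift-sign conventions is the only place where care is genuinely needed.
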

\begin{corollary} \label{cor:triangulatedcatequalsmodulecat}
In the situation of Corollary~\ref{cor:triangulatedcatequalshomotopycat} the weight complex functor induces an equivalence
$$t:\langle \Tt \rangle_{\cong, \inplus, \Delta}\to \Dperf(\End_\Cc(\Tt)).$$
If $\Cc$ admits and autoequivalence $\langle 1\rangle$ such that $\widehat{\Tt}=\bigcup_n \Tt\langle n\rangle$ is also tilting then the weight complex functor induces an equivalence
$$t:\langle \widehat{\Tt} \rangle_{\cong, \inplus, \Delta}\to \DperfZ(\End^\bullet_\Cc(\Tt)).$$
\end{corollary}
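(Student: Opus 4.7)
The plan is to combine Corollary~\ref{cor:triangulatedcatequalshomotopycat} with the two equivalences identifying the additive category $\langle \Tt \rangle_{\cong, \inplus, \oplus}$ (resp.\ its graded analogue) with a module category over the endomorphism algebra $\End_\Cc(\Tt)$ (resp.\ $\End^\bullet_\Cc(\Tt)$). No new analytic content is needed; the corollary is a formal consequence of results already established.

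For the ungraded statement, I would first invoke Corollary~\ref{cor:triangulatedcatequalshomotopycat}, which since $\Tt$ is tilting and $\Cc$ carries the requisite enhancement yields an equivalence of triangulated categories
\[
t:\langle \Tt \rangle_{\cong, \inplus, \Delta}\xrightarrow{\ \sim\ } \K^b(\langle \Tt \rangle_{\cong, \inplus, \oplus}).
\]
Next, the proposition displayed just before~\eqref{eq:gradedend} provides an additive equivalence $\langle \Tt \rangle_{\cong, \inplus, \oplus}\xrightarrow{\sim}\operatorname{mod_{fgp}-}\End_\Cc(\Tt)$ via $\bigoplus_{M\in\Tt}\Hom_\Cc(M,-)$. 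Applying the bounded homotopy category functor $\K^b(-)$ to this additive equivalence and composing with $t$ gives the desired triangulated equivalence onto $\Dperf(\End_\Cc(\Tt))=\K^b(\operatorname{mod_{fgp}-}\End_\Cc(\Tt))$.

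For the graded statement, the same strategy works once one replaces $\Tt$ by $\widehat{\Tt}=\bigcup_n\Tt\langle n\rangle$. By hypothesis $\widehat{\Tt}$ is tilting in $\Cc$, and $\langle\widehat{\Tt}\rangle_{\cong,\inplus,\Delta}=\langle\Tt\rangle_{\cong,\inplus,\Delta,\langle\pm1\rangle}$ by closure under $\langle\pm1\rangle$, so Corollary~\ref{cor:triangulatedcatequalshomotopycat} produces an equivalence
\[
t:\langle \widehat{\Tt} \rangle_{\cong, \inplus, \Delta}\xrightarrow{\ \sim\ } \K^b(\langle \widehat{\Tt} \rangle_{\cong, \inplus, \oplus}).
\]
Since $\langle \widehat{\Tt} \rangle_{\cong, \inplus, \oplus}=\langle \Tt \rangle_{\cong, \inplus, \oplus,\langle \pm 1\rangle}$, Proposition~\ref{prop:gradedcatasmodules} identifies the right-hand heart with $\operatorname{mod}^\Z_{\operatorname{fgp}}\operatorname{-}\End_\Cc^\bullet(\Tt)$, and passing to $\K^b(-)$ yields the equivalence with $\DperfZ(\End^\bullet_\Cc(\Tt))$.

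The only points requiring a moment of care, rather than a genuine obstacle, are purely bookkeeping: verifying that $\widehat{\Tt}$ indeed generates $\langle\Tt\rangle_{\cong,\inplus,\Delta,\langle\pm1\rangle}$ in the sense required to apply Corollary~\ref{cor:triangulatedcatequalshomotopycat}, and checking that the Hom-functor used in Proposition~\ref{prop:gradedcatasmodules} intertwines the shift $\langle 1\rangle$ on $\Cc$ with the internal grading shift on $\End^\bullet_\Cc(\Tt)$-modules, so that the resulting equivalence respects the graded structure. Both are immediate from the definitions, and no further argument is needed beyond concatenating the three equivalences above.
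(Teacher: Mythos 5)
Your proposal is correct and follows exactly the route the paper intends (the paper leaves the corollary without explicit proof precisely because it is the concatenation you describe): apply Corollary~\ref{cor:triangulatedcatequalshomotopycat} to $\Tt$ (resp.\ $\widehat{\Tt}$), identify the heart with $\operatorname{mod_{fgp}-}\End_\Cc(\Tt)$ via the preceding proposition (resp.\ with $\operatorname{mod}^\Z_{\operatorname{fgp}}\operatorname{-}\End^\bullet_\Cc(\Tt)$ via Proposition~\ref{prop:gradedcatasmodules}, using $\langle\widehat{\Tt}\rangle_{\cong,\inplus,\oplus}=\langle\Tt\rangle_{\cong,\inplus,\oplus,\langle\pm1\rangle}$), and pass to $\K^b(-)$.
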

\subsection{Koszul duality} Weight structures and weight complex functors can be used to provide a
convenient language for the \emph{Koszul duality} formalism from \cite{beilinson_koszul_1996}, and slightly more generally  \cite{mazorchuk_quadratic_2009}. 

Let $A$ be a $\Z$-graded algebra which is positively graded, that is, $A^i=0$ for $i<0$ and assume that $A_0$ is semisimple and finite dimensional. Denote by $\langle1\rangle$ the shift of grading functor on the category $A\text{-}\operatorname{mod}_{fg}^\Z$ of finitely generated graded $A$-modules and let $\Tt(A)$ be a set of representatives of simple objects concentrated in degree $0.$

The algebra $A$ is called \emph{Koszul} if $\Ext^i(L,L'\langle j\rangle)=0 \text{ for all } i\neq j$ and $L,L'\in \Tt(A).$ Equivalently, $A$ is Koszul if the family $\widehat{\Tt}(A)=\bigcup_i\Tt(A)\langle i\rangle[i]$ is tilting in $\DerG^b(A\text{-}\operatorname{mod}_{fg}^\Z).$ In this case, the \emph{Koszul dual} $A^!$ of $A$ is the graded algebra
$$A^!=\End^\bullet_{\DerG^b(A\text{-}\operatorname{mod}_{fg}^\Z)}(\Tt(\Aa))$$
where the right hand side is defined as in \eqref{eqn:defendt} using the autoequivalence $\langle 1\rangle[1].$ 

Corollary~\ref{cor:triangulatedcatequalsmodulecat} implies that the weight complex functor induces an equivalence of categories
\begin{align}
    t:\langle \widehat{\Tt}(A) \rangle_{\cong, \Delta}\stackrel{\sim}{\to} \Dperf(A^!).\label{eq:koszulweak}
\end{align}
In the case that $A$ is finitely generated as an $A_0$-left and right module and $A^!$ is left Noetherian, \eqref{eq:koszulweak} specializes to the \emph{Koszul duality} from \cite[Theorem 2.12.5]{beilinson_koszul_1996}
\begin{align}
    \DerG^b(A\text{-}\operatorname{mod}_{fg}^\Z)\stackrel{\sim}{\to} \DerG^b(A^!\text{-}\operatorname{mod}_{fg}^\Z).\label{eq:koszulstrong}
\end{align}
The heart of the weight structure defined by $\Tt(A)$ on the left hand side maps to projective modules in cohomological degree $0$ on the right. Moreover, one can show that the heart of the standard $t$-structure on the right hand side corresponds to the category of \emph{linear complexes} on the left hand side, see \cite{mazorchuk_quadratic_2009}.
\subsection{Ringel duality}
Similarly to the last paragraph, weight structures and weight complex functors can be applied to the theory of \emph{tilting objects} and \emph{Ringel duality} for highest
weight categories and their semi-infinite and stratified generalisation. For the general setup we refer to Brundan--Stroppel \cite{brundan_semi-infinite_2021} and freely use the terminology from there. 
Let $\Rr$ be a lower finite or essentially finite $\epsilon$-stratified category over an algebraically closed field (in particular it could be a highest weight category).

Then by \cite[Theorems 4.2, 4.13, 4.18]{brundan_semi-infinite_2021} \emph{tilting modules} in the sense of highest weight categories exist in $\Rr$ and form an additive subcategory generated by the family $\Tt(\Rr)$ of representatives of isomorphism classes of indecomposable tilting modules. 
For $E=\End_\Rr(\Tt(\Rr)),$ see \eqref{eqn:defendt}, the category $\Rr'=\operatorname{mod_{lfd}-}E$ of locally finite dimensional right modules is called the \emph{Ringel dual} of $\Rr.$ 

By \cite[Theorems 3.11, 3.56]{brundan_semi-infinite_2021} the family of tilting modules $\Tt(\Rr)$ is tilting in $\DerG^b(\Rr)$ in the sense of Definition~\ref{def:tilting}. Therefore Corollary~\ref{cor:triangulatedcatequalsmodulecat} implies that the weight complex functor induces an equivalence of categories
\begin{align}
    t:\langle \Tt(\Rr) \rangle_{\cong, \Delta}\stackrel{\sim}{\to} \Dperf(E).\label{eq:ringelweak}
\end{align}
Now assume that $\Tt(\Rr)$ generates $\DerG^b(\Rr)$ as a triangulated category and $E$ has finite cohomological dimension. For example, this is the case if $\Rr$ is a highest weight category. Then \eqref{eq:ringelweak} yields a derived equivalence, called \emph{Ringel duality}, between $\Rr$ and its Ringel dual $\Rr'$
$$\DerG^b(\Rr)\stackrel{\sim}{\to} \DerG^b(\Rr').$$
This interpretation of Ringel duality in terms of weight complex functors has interesting applications. For example, one can use Proposition~\ref{prop:weightcomplexcommute} to show that Ringel duality commutes with functors preserving tilting modules (under the correct technical assumptions).

\section{A formalism of equivariant motivic sheaves}\label{sec:formalismofequivariantmotivicsheaves}

Mixed $\ell$-adic sheaves or mixed Hodge modules are important tools in geometric representations theory. They are upgrades of the categories of $\ell$-adic sheaves and derived category of constructible sheaves on a complex variety with analytic topology, respectively. In particular, they are naturally equipped with a notion of \emph{weight filtration} and an endofunctor $(1),$ called \emph{Tate twist}, shifting this filtration. Arguments involving these weights can be very powerful.

In this section we will recall the formalism of (equivariant) motivic sheaves which has similar properties but two important technical advantages. First, the aforementioned weight filtration will be replaced by a grading. Secondly, all results work rationally and are hence indepedent of $\ell.$

\subsection{Recollections on motivic sheaves}
Let $k$ be a perfect field and $\poi=\Spec(k).$ All varieties are considered to be over $k.$

For a variety $X$ over $k$ we consider the triangulated category $\DM(X,\Q)$ of rational \emph{motivic sheaves on $X.$}\footnote{There are various definitions of $\DM(X,\Q)$ that are equivalent, see \cite[Section C.3]{cisinski_triangulated_2019}. We leave the choice of definition to the reader. In the literature, objects in $\DM(X,\Q)$ are often referred to as \emph{relative motives} or simply \emph{motives}. However, we prefer the term motivic sheaf and reserve the term motive for objects in $\DM(k,\Q)$.} 
In the special case $X=\Spec(k),$ the category $\DM(k,\Q)$ agrees with Voevodsky's \emph{triangulated category of mixed motives} over $k.$

The system of categories $\DM(X,\Q)$ has remarkable properties, some of which we will recall now. 

First, the work of Ayoub \cite{ayoub_les_2007,ayoub_les_20072} and Cisinski--Déglise \cite{cisinski_triangulated_2019} shows that $\DM(X,\Q)$ can be equipped with a six-functor-formalism which works very similarly as in the setting of $\ell$-adic sheaves. Important objects are the \emph{motive} $\Mot(X)$ and \emph{motive with compact support} $\Mot^c(X)$ of a variety $f:X\to \poi$ which can be expressed in terms of the six functors as
$$\Mot(X)=f_!f^!\Q\in \DM(k, \Q) \text{ and } \Mot^c(X)=f_*f^!\Q\in \DM(k, \Q).$$

There is an autoequivalence $(1)=-\otimes \Q(1)$ called \emph{Tate twist} on $\DM$ which commutes with the six functors and can be defined by splitting the motive of the projective line as
$$\Mot(\mathbb{P}^1_k)=\Q\oplus \Q(1)[2].$$

For each prime $\ell$ invertible in $k$, there is an $\ell$-adic \emph{realisation functor} to the category $\Dladic(X,\Q_\ell)$ of $\ell$-adic sheaves on $X$
\begin{align}
    \Real_\ell: \DM(X,\Q)\to \Dladic(X,\Q_\ell),\label{eq:realisationfunctor}
\end{align}
which is compatible with the six functors and the Tate twist, see \cite{ayoub_realisation_2014}.

Morphisms in $\DM(X,\Q)$ are best understood in terms of higher Chow groups, as defined by Bloch \cite{bloch_algebraic_1986}. For $X$ smooth, there is a natural isomorphism
\begin{align}
    \Hom_{\DM(X,\Q)}(\un_X,\un_X(m)[n])&\cong\Hom_{\DM(k,\Q)}(\Mot(X),\un(m)[n])\nonumber\\
    &\cong\Chow^m(X,2m-n)_\Q,\label{eqn:motiviccomologychowgroups}
\end{align}
where $\Q_X$ denotes the tensor unit and $\Chow$ a higher Chow group. In particular, for $n=2m$ one obtains the usual Chow group of codimension-$m$ cycles $\Chow^m(X,0)_\Q=\Chow^m(X)_\Q.$ In this case the realisation functor $\Real_\ell$ yields the cycle class map to $\ell$-adic cohomology
$$\Chow^m(X)_\Q\to H^{2m}_{\text{\'et}}(X,\Q_\ell(m)).$$
For $X=\poi$ and $k=\F_q$ or $k=\overline{\F}_p$ these hom-groups are particularly simple:
\begin{lemma}\label{lem:motiviccohomologyofpoint} Let $k=\F_q$ or $k=\overline{\F}_p$, then
\begin{align}\label{eqn:motiviccohomologyofpoint}
    \Hom_{\DM(k,\Q)}(\un,\un(m)[n])=\left\{
\begin{array}{cl}
\Q & \text{ for }n=m=0 \text{ and} \\
0 &  \text{otherwise.} \\
\end{array}
\right.
\end{align}
\end{lemma}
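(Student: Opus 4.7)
The plan is to reduce the computation to higher Chow groups of a point via the isomorphism (\ref{eqn:motiviccomologychowgroups}), and then pull in classical input from algebraic $K$-theory.

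First, since $k$ is perfect, $\Spec(k)$ is smooth, so the identification (\ref{eqn:motiviccomologychowgroups}) applies and yields
$$\Hom_{\DM(k,\Q)}(\un,\un(m)[n])\;\cong\;\Chow^m(\Spec k,\,2m-n)_\Q.$$
By construction higher Chow groups $\Chow^m(\Spec k,i)_\Q$ vanish as soon as $m<0$ or $i<0$, so only the range $m\ge 0$ and $n\le 2m$ can contribute. It therefore suffices to show that $\Chow^m(\Spec k,i)_\Q=0$ for all pairs $(m,i)\neq (0,0)$ with $m,i\ge 0$, and that $\Chow^0(\Spec k,0)_\Q=\Q$. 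The last point is immediate since $\Chow^0(\Spec k,0)=\Z$ by definition.

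For $k=\F_q$ I would invoke the Adams eigenspace decomposition of rational $K$-theory of a field
$$K_i(k)_\Q\;\cong\;\bigoplus_{m\ge 0}\Chow^m(\Spec k,i)_\Q,$$
combined with Quillen's computation: $K_0(\F_q)=\Z$ and $K_i(\F_q)$ is finite for all $i>0$. Rationally this forces $K_i(\F_q)_\Q=0$ for $i>0$, so every summand $\Chow^m(\F_q,i)_\Q$ vanishes for $i>0$. For $i=0$ the class $[\Spec\F_q]$ generates $K_0(\F_q)_\Q=\Q$ and lies in Adams weight zero, so $\Chow^0(\F_q,0)_\Q=\Q$ while $\Chow^m(\F_q,0)_\Q=0$ for all $m>0$. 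This disposes of the finite field case.

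For $k=\overline{\F}_p$ I would write $\overline{\F}_p=\operatorname{colim}_n\F_{p^n}$ and use continuity of motivic cohomology (equivalently, compatibility of higher Chow groups with filtered colimits of the base field) to obtain
$$\Chow^m(\Spec\overline{\F}_p,i)_\Q\;=\;\operatorname{colim}_n\Chow^m(\Spec\F_{p^n},i)_\Q,$$
which reduces the claim to the previous paragraph. I expect the only point that needs any real care is locating these three ingredients precisely in the form I want, namely (i) the Bloch formula (\ref{eqn:motiviccomologychowgroups}), which is quoted; (ii) the rational Adams decomposition relating $K$-theory to higher Chow groups over a field; and (iii) continuity of $\DM(-,\Q)$ under filtered colimits of perfect fields. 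Once these are in hand the computation is essentially bookkeeping.
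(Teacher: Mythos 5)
Your proof is correct and takes essentially the same route as the paper: reduce via \eqref{eqn:motiviccomologychowgroups} to rational higher Chow groups of a point, relate those to rational $K$-theory (your Adams eigenspace decomposition is the same content as the Riemann--Roch theorem \cite[Theorem 9.1]{bloch_algebraic_1986} the paper cites), and feed in Quillen's computation for finite fields. The only cosmetic difference is that the paper passes to the colimit at the level of $K$-theory rather than at the level of higher Chow groups.
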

\begin{proof}
First, note that the higher $K$-theory of finite fields $K_i(\Spec(\mathbb{F}_q))$ is torsion for $i>0$ by \cite[Theorem 8]{quillen_cohomology_1972}. The same is true for the algebraic closure, since $K$-theory commutes with filtered colimits. Hence, $K_i(k)_\Q=0$ for $i>0.$
By the Riemann--Roch theorem for rational higher Chow groups, see \cite[Theorem 9.1]{bloch_algebraic_1986}, $\Chow^m(k,2m-n)_\Q$ is a direct summand of $K_{2m-n}(k)_\Q.$
The statement follows from  \eqref{eqn:motiviccomologychowgroups}.
\end{proof}
In the following the purity property in  \eqref{eqn:motiviccohomologyofpoint} will be crucial.
For this reason, we will from now on restrict to the case $k=\overline{\F}_p.$

\subsection{Mixed Tate motives} \label{sec:tatemotives}
We will now recall the definition of the category of pure/mixed Tate motives, see \cite{levine_mixed_2005}, which will play for us the role of a \emph{graded version} of the derived category of sheaves on the point.
\begin{definition}
The category of \emph{mixed Tate motives}\footnote{In the literature, there are many notations for the category of mixed Tate motives. For example, $\operatorname{TDM}$ in \cite{huber_slice_2006}, $\operatorname{MTDer}$  in \cite{soergel_perverse_2018,soergel_equivariant_2018}, $\operatorname{DMT}$ in \cite{spitzweck_notes_2016} or $\operatorname{DTM}$ in \cite{levine_mixed_2005}.} is the subcategory
$$\DTM(k,\Q)=\langle\un\rangle_{\cong, \inplus, \Delta, (\pm 1)}\subset\DM(k,\Q)$$
generated by $\un$ under isomorphism, direct summands, Tate twists and triangles. The category of \emph{pure Tate motives} is the category generated by objects $\Q(n)[2n]$
$$\DTM(k,\Q)^{w=0}=\langle \Q(n)[2n]\mid n\in \Z \rangle_{\cong,\inplus,\oplus}\subset \DTM(k,\Q)$$
with respect to isomorphism, direct summands and finite direct sums.
\end{definition}

For example, the decompostion of a projective space into affine spaces
$$\mathbb{P}^n=\A^0\uplus \A^1 \uplus\dots \uplus \A^n$$
induces a decomposition of its motive into a direct sum of Tate motives
$$\Mot(\mathbb{P}^n)=\Q\oplus \Q(1)[2]\oplus\dots\oplus \Q(n)[2n]$$
which shows that $\Mot(\mathbb{P}^n)$ is pure Tate.

We will use the following generalization.
\begin{definition}
A partition of a variety $X$ into subvarieties $X_1,\dots, X_n$ (called \emph{strata}) is an \emph{affine paving}, if $X
_{\leq k}=\bigcup_{i=1,\dots,k}X_i$
is closed in $X$ for all $1\leq l \leq n$ and each stratum $X_i$ is isomorphic to an affine space $\mathbb{A}^n.$
\end{definition}
\begin{proposition}\label{prop:affinepavingpuretate} Let $X$ be a variety that admits an affine paving. Then the motive with compact support
$\Mot^c(X)\in \DTM(k, \Q)^{w=0}$
is pure Tate.
\end{proposition}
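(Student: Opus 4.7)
The plan is to induct on the number $n$ of strata in the affine paving $X_1,\dots,X_n$. Since $X_{\leq n-1}$ is closed in $X$ with open complement $U=X_n\cong\A^{d_n}$, applying the recollement triangle $i_*i^!\to\id\to j_*j^*$ on $\DM(X,\Q)$ to $f^!\Q$ (for $f\colon X\to \poi$) and pushing forward via $f_*$ produces the localization triangle
$$\Mot^c(X_{\leq n-1})\to\Mot^c(X)\to\Mot^c(U)\to.$$
By the inductive hypothesis applied to the restricted paving $X_1,\dots,X_{n-1}$ of $X_{\leq n-1}$, the first term lies in $\DTM(k,\Q)^{w=0}$, so it suffices to handle the base case $X=\A^m$ and to show that the above triangle splits.

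For the base case I would prove $\Mot^c(\A^m)\cong\Q(m)[2m]$ by a secondary induction on $m$ using the stratification $\mathbb{P}^m=\A^m\sqcup\mathbb{P}^{m-1}$, combined with the standard pure Tate decomposition $\Mot^c(\mathbb{P}^m)=\Mot(\mathbb{P}^m)=\bigoplus_{i=0}^m\Q(i)[2i]$ and the splitting argument below.

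The splitting rests on a general property of weight structures: for any distinguished triangle $A\to B\to C\to A[1]$ in $\DM_{gm}(k,\Q)$ with $A,C\in\DM_{gm}(k,\Q)^{w=0}$, one has $A[1]\in\DM_{gm}(k,\Q)^{w\leq -1}$ and $C\in\DM_{gm}(k,\Q)^{w\geq 0}$, so the orthogonality axiom of Definition~\ref{def:weightstructure} forces $\Hom(C,A[1])=0$; the connecting map vanishes and $B\cong A\oplus C$. Applied with $A=\Mot^c(X_{\leq n-1})$ and $C=\Mot^c(U)$, both pure Tate, this gives $\Mot^c(X)\cong\Mot^c(X_{\leq n-1})\oplus\Mot^c(U)\in\DTM(k,\Q)^{w=0}$, closing the induction.

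The only real obstacle is organizational: one must confirm that $\Mot^c(X)$ lies in $\DM_{gm}(k,\Q)$ so that the Chow weight structure of Example~\ref{exp:exampleweightstructures}(3) is available, and that the localization triangle is indeed furnished by the six-functor formalism. Both are standard in the Ayoub/Cisinski--D\'eglise setup, so I do not anticipate substantive difficulty.
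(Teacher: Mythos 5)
Your proof follows the same route the paper takes (and which it attributes to \cite[Lemma 2.3(10)]{eberhardt_springer_2021}): induct on the number of strata, invoke the localization triangle $\Mot^c(X_{\leq n-1})\to\Mot^c(X)\to\Mot^c(U)$, and feed in $\Mot^c(\A^m)=\Q(m)[2m]$. Your splitting argument for the inductive step is correct and in fact a bit sharper than what the paper records, since it yields the direct-sum decomposition $\Mot^c(X)\cong\bigoplus_i\Q(d_i)[2d_i]$ and not merely pure-Tateness.

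The one place that does not go through as written is the base case. The splitting lemma requires both outer terms of the triangle to already lie in $\DTM(k,\Q)^{w=0}$; in the triangle $\Mot^c(\mathbb{P}^{m-1})\to\Mot^c(\mathbb{P}^{m})\to\Mot^c(\A^{m})$ the unknown is precisely the third term, so the lemma tells you nothing about $\Mot^c(\A^m)$. Even identifying the first map as a diagonal matrix of scalars (via Lemma~\ref{lem:motiviccohomologyofpoint}) is not enough; you would still need to know the scalars are nonzero, i.e.\ that the Gysin map is split injective. The clean route is to bypass $\mathbb{P}^m$ entirely: for $f\colon\A^m\to\poi$ smooth of dimension $m$ one has $f^!\Q=\Q_{\A^m}(m)[2m]$ by relative purity, so $\Mot^c(\A^m)=f_*\Q_{\A^m}(m)[2m]=\Q(m)[2m]$ by $\A^1$-homotopy invariance of $f_*\Q_{\A^m}$. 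With that substitution your argument is complete.
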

\begin{proof}
This follows from the localisation sequence and $\Mot^c(\A^n)=\Q(n)[2n].$ A proof can be found in \cite[Lemma 2.3(10)]{eberhardt_springer_2021}.
\end{proof}
As the notation suggests, the category of pure Tate motives is the heart of a weight structure:
\begin{proposition}
The category $\DTM(k,\Q)^{w=0}$ is the heart of a weight structure $w$ on $\DTM(k,\Q).$
\end{proposition}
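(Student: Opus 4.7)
The plan is to apply Proposition~\ref{prop:defineweightstructure} directly, taking the generating collection to be $\Tt=\{\Q(n)[2n]\mid n\in\Z\}\subset\DTM(k,\Q)$. Two things must be verified: that $\Tt$ is negative in the sense of Definition~\ref{def:tilting}, and that $\Tt$ generates $\DTM(k,\Q)$ under isomorphisms, direct summands and triangles (idempotent-closedness of the ambient category is already built into the definition of $\DTM(k,\Q)$ via $\inplus$).

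For negativity, I would compute, for $n,m\in\Z$ and $i<0$,
\[
\Hom_{\DTM(k,\Q)}\bigl(\Q(n)[2n],\Q(m)[2m][i]\bigr)\cong \Hom_{\DM(k,\Q)}\bigl(\un,\un(m-n)[2(m-n)+i]\bigr),
\]
using that the Tate twist $(1)$ is an autoequivalence commuting with shifts. By Lemma~\ref{lem:motiviccohomologyofpoint} (this is the step that crucially uses $k=\overline{\F}_p$), the right-hand side vanishes unless $m-n=0$ and $2(m-n)+i=0$, i.e.\ unless $i=0$. In particular it vanishes for all $i<0$, so $\Tt$ is negative; as a bonus, the same computation shows $\Tt$ is in fact tilting, which will be useful elsewhere.

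For generation, I would note that by definition
\[
\DTM(k,\Q)=\langle\un\rangle_{\cong,\inplus,\Delta,(\pm 1)}=\langle \Q(n)\mid n\in\Z\rangle_{\cong,\inplus,\Delta},
\]
since the Tate twist operation is absorbed by including all $\Q(n)$ as generators. Because cohomological shifts are obtained from distinguished triangles (rotating the triangle with zero object), the collection $\{\Q(n)[2n]\mid n\in\Z\}$ generates the same subcategory as $\{\Q(n)\mid n\in\Z\}$ under $\langle-\rangle_{\cong,\inplus,\Delta}$. Hence $\Tt$ generates $\DTM(k,\Q)$ as required.

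Applying Proposition~\ref{prop:defineweightstructure} then yields a bounded weight structure $w$ on $\DTM(k,\Q)$ whose heart is
\[
\langle \Tt\rangle_{\cong,\inplus,\oplus}=\langle \Q(n)[2n]\mid n\in\Z\rangle_{\cong,\inplus,\oplus}=\DTM(k,\Q)^{w=0},
\]
matching the definition in the text. The only real content is the purity computation of Lemma~\ref{lem:motiviccohomologyofpoint}; everything else is formal, and there is no serious obstacle.
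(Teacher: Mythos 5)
Your proof is correct and takes the same route as the paper's: verify that $\{\Q(n)[2n]\}$ is negative and generates $\DTM(k,\Q)$, then apply Proposition~\ref{prop:defineweightstructure}. You are right to cite Lemma~\ref{lem:motiviccohomologyofpoint} as the crucial input---the paper's terse reference to the Chow-group isomorphism~\eqref{eqn:motiviccomologychowgroups} alone would not suffice over a general perfect field (e.g.\ $H^1_{\mathrm{mot}}(\poi,\Q(1))\cong k^\ast\otimes\Q$ is nonzero), so the standing restriction to $k=\overline{\F}_p$ is essential and your proof makes that dependence explicit.
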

\begin{proof}
By~\eqref{eqn:motiviccomologychowgroups} the collection of objects in the $\DTM(k,\Q)^{w=0}$ is negative. Moreover $\DTM(k,\Q)$ is idempotent closed and generated by $\DTM(k,\Q)^{w=0}$. Now the  statement follows from Proposition~\ref{prop:defineweightstructure}.
\end{proof}
Using that $k=\overline{\mathbb{F}}_p$ we obtain a simple decription of the category of Tate motives:
\begin{proposition}
The weight complex functor yields an equivalence of categories
$$t:\DTM(k,\Q)\to\K^b(\DTM(k,\Q)^{w=0})\cong\D^b(\Q\operatorname{-mod}^\Z)$$
where $\Q(1)[2]$ corresponds to the one-dimensional vector space in degree one $\Q\langle 1\rangle$ in the category $\Q\operatorname{-mod}^\Z$ of graded finite-dimensional vector spaces over $\Q.$
\end{proposition}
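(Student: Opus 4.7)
The plan is to verify that the collection $\Tt=\{\Q(n)[2n]\mid n\in\Z\}$ is tilting in $\DTM(k,\Q)$ and then invoke Corollary~\ref{cor:triangulatedcatequalshomotopycat}. The key input is the purity computation in Lemma~\ref{lem:motiviccohomologyofpoint}: for $k=\overline{\F}_p$ we have
\begin{align*}
\Hom_{\DM(k,\Q)}(\Q(n)[2n],\Q(m)[2m][i]) &\cong \Hom_{\DM(k,\Q)}(\un,\un(m-n)[2(m-n)+i]),
\end{align*}
which by \eqref{eqn:motiviccohomologyofpoint} vanishes unless $m=n$ and $i=0$, in which case it is $\Q$. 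Hence $\Tt$ is tilting (in particular negative), so by Proposition~\ref{prop:defineweightstructure} it generates the announced weight structure, and the additive hull of $\Tt$ in $\DTM(k,\Q)$ equals its heart.

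The second step is to apply Corollary~\ref{cor:triangulatedcatequalshomotopycat} to $\Tt$. This requires an enhancement of $\DTM(k,\Q)$: one uses the stable $\infty$-categorical enhancement of $\DM(k,\Q)$ from \cite{cisinski_triangulated_2019}, which restricts to $\DTM(k,\Q)$ since the latter is generated by a small set of objects under the relevant closure operations. With this in hand, the weight complex functor yields an equivalence
\begin{equation*}
t:\DTM(k,\Q)=\langle \Tt\rangle_{\cong,\inplus,\Delta}\xrightarrow{\sim}\K^b(\langle \Tt\rangle_{\cong,\inplus,\oplus})=\K^b(\DTM(k,\Q)^{w=0}).
\end{equation*}

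It remains to identify $\DTM(k,\Q)^{w=0}$ with graded finite-dimensional $\Q$-vector spaces. Because $\Hom(\Q(n)[2n],\Q(m)[2m])$ is $\Q$ for $n=m$ and $0$ otherwise, the locally unital endomorphism algebra $\End(\Tt)$ of \eqref{eqn:defendt} is $\bigoplus_{n\in\Z}\Q\cdot e_n$ with orthogonal idempotents $e_n$. Proposition~\ref{prop:gradedcatasmodules} (or its ungraded analogue applied separately in each degree) then identifies $\DTM(k,\Q)^{w=0}$ with the category $\Q\operatorname{-mod}^\Z_{fd}$ of finite-dimensional graded $\Q$-vector spaces, sending $\Q(n)[2n]$ to $\Q\langle n\rangle$. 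Finally, since $\Q\operatorname{-mod}^\Z_{fd}$ is semisimple, every bounded complex splits into its cohomology, so $\K^b(\Q\operatorname{-mod}^\Z_{fd})\simeq \D^b(\Q\operatorname{-mod}^\Z)$, completing the chain of equivalences.

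The only non-routine point is the enhancement issue used to invoke the weight complex functor; everything else is direct bookkeeping from the vanishing in Lemma~\ref{lem:motiviccohomologyofpoint}. Since the formalism of \cite{cisinski_triangulated_2019} (or \cite{ayoub_les_2007}) provides $\DM(k,\Q)$ as the homotopy category of a stable $\infty$-category and Tate objects form a stable subcategory thereof, this obstacle dissolves.
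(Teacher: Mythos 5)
Your proof is correct and follows essentially the same route as the paper: both hinge on Lemma~\ref{lem:motiviccohomologyofpoint} to establish tiltingness of the pure Tate objects, both use the stable $\infty$-categorical enhancement of $\DM(k,\Q)$, and both then feed this into the weight-complex machinery from Section~\ref{sec:weightstructuresandtilting}. The only difference is cosmetic: the paper's proof invokes Corollary~\ref{cor:triangulatedcatequalsmodulecat} directly (with $\Tt=\{\Q\}$ and autoequivalence $(1)[2]$, giving $\End^\bullet=\Q$ and hence $\DperfZ(\Q)$), whereas you apply Corollary~\ref{cor:triangulatedcatequalshomotopycat} and then identify the heart and pass to $\D^b$ by hand; your version spells out the bookkeeping that the paper leaves implicit, which is arguably clearer.
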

\begin{proof}
Lemma~\ref{lem:motiviccohomologyofpoint} implies that $\DTM(k,\Q)^{w=0}$ is tilting in $\DTM(k,\Q).$ Further, $\DM$ admits an enhancement as stable $\infty$-category. The statement follows from Corollary~\ref{cor:triangulatedcatequalsmodulecat}.
\end{proof}

We note that the weight structure on mixed Tate motives considered here is just the shadow of the Chow weight structure on $\DM^{gm}(k,\Q),$ see Example~\ref{exp:exampleweightstructures}.

\subsection{Equivariant motivic sheaves}\label{sec:equivariantmotives}
Soergel--Virk--Wendt \cite{soergel_equivariant_2018} introduced an \emph{equivariant} version of the above formalism.
For a variety $X$ with an action of a linear algebraic group $G$
they define the category $\DM_G(X,\Q)$ of \emph{$G$-equivariant motivic sheaves} on $X.$ This system of categories still carries a six-functor-formalism\footnote{For technical reasons, Soergel--Virk--Wendt construct a six-functor-formalism for the full subcategory $\DM^+_G(X,\Q)\subset \DM_G(X,\Q)$ of objects that are bounded below with respect to the homotopy $t$-structure, see \cite[Section I.6]{soergel_equivariant_2018}. All objects that we consider here are automatically in  $\DM^+_G(X,\Q)$ and we simply ignore this technicality.}  and induction/restriction functors changing the group.

Similarly to the non-equivariant case, there is a realization functor
to the equivariant derived category of $\ell$-adic sheaves  $\Dladic_G(X,\Q_\ell)$ of Bernstein--Lunts \cite{bernstein_equivariant_1994}
$$\Real_\ell: \DM_G(X,\Q)\to \Dladic_G(X,\Q_\ell).$$
Moreover, for $X$ smooth there is a natural isomorphism
\begin{align}
    \Hom_{\DM_G(X,\Q)}(\un_X,\un_X(m)[n])&
    \cong\Chow^m_G(X,2m-n)_\Q\label{eqn:equivariantmotiviccomologychowgroups}
\end{align}
to the equivariant higher Chow groups as defined by Totaro \cite{raskind_chow_1999} and Edidin--Graham \cite{edidin_equivariant_1998}.

There is a forgetful functor from equivariant to non-equivariant motivic sheaves
$$\For: \DM_G(X,\Q)\to \DM(X,\Q)$$
commuting with the six functors.

An important property of equivariant (motivic) sheaves is the \emph{induction equivalence}, which allows to describe $G$-equivariant motivic sheaves on a $G$-orbit $G/H$ in terms of $H$-equivariant motives on a point:
\begin{proposition}
Let $i:H\hookrightarrow G$ be a closed subgroup. Denote by $s:X\to G\times_H X, x\mapsto [e,x].$ If the anti-diagonal action of $H$ on $G\times X$ is free then there is an equivalence of categories
\begin{align}
    (i,s)^*:\DM_G(G\times_H X)\stackrel{\sim}{\to}\DM_H(X).\label{eq:inductionequivalence}
\end{align}
\end{proposition}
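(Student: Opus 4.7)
The plan is to produce an explicit quasi-inverse by induction, using the equivariant descent statements in the equivariant motivic formalism of \cite{soergel_equivariant_2018}. Consider the diagram
\[
G \times_H X \xleftarrow{\;p\;} G \times X \xrightarrow{\;q\;} X,
\]
where $p$ is the quotient by the anti-diagonal $H$-action $h \cdot (g,x) = (g h^{-1}, h x)$ (free by assumption) and $q$ is the second projection. Both maps are equivariant for the commuting left $G$-action $g' \cdot (g,x) = (g'g, x)$ on the middle. The section $s$ factors as $s = p \circ j$ with $j \colon X \to G \times X$, $x \mapsto (e,x)$; crucially, $q \circ j = \id_X$.

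Equivariant descent along the free $H$-action yields a first equivalence
\[
p^* \colon \DM_G(G \times_H X) \xrightarrow{\;\sim\;} \DM_{G \times H}(G \times X),
\]
where $G \times H$ acts on $G \times X$ by combining the left $G$-action with the anti-diagonal $H$-action. Descent for the (trivial) principal $G$-bundle $q$ yields a second equivalence
\[
q^* \colon \DM_H(X) \xrightarrow{\;\sim\;} \DM_{G \times H}(G \times X),
\]
whose quasi-inverse, on the level of underlying motivic sheaves, is $j^*$ (since $q \circ j = \id_X$), with the $H$-equivariance on $j^* \mathcal{F}$ constructed from the combined $G$- and $H$-equivariances of $\mathcal{F}$. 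Setting $\Ind := (p^*)^{-1} \circ q^*$, the composition $(i,s)^* \circ \Ind = j^* \circ p^* \circ (p^*)^{-1} \circ q^* = j^* \circ q^* = \id$, and the reverse composition is $\id$ by the same token.

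The main subtlety is reconciling the two natural $H$-actions on $G \times X$ that enter the argument: the anti-diagonal action (used for descent along $p$) and the naive second-factor action (the obvious one for $q$). These differ as $H$-actions on $G \times X$, so the $H$-equivariant structure on $j^* \mathcal{F}$ cannot simply be the restriction of the given $H$-structure on $\mathcal{F}$; rather, it must be assembled from the $H$- and $G$-equivariances of $\mathcal{F}$ via the identity $h \cdot_{\text{anti}} (e,x) = (h^{-1}, hx) = h^{-1} \cdot_G (e, hx)$, using the $G$-equivariance to transport the fiber at $(h^{-1}, hx)$ back to the section $j(X)$. This compatibility is encoded in the cocycle conditions defining equivariant motivic sheaves, and all the necessary descent machinery has been developed in \cite{soergel_equivariant_2018}, so the argument reduces to a diagram chase parallel to the standard one for equivariant $\ell$-adic sheaves in Bernstein--Lunts.
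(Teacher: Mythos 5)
The paper itself gives no argument here; the ``proof'' in the manuscript is a bare citation to Proposition~I.7.4 of Soergel--Virk--Wendt. So there is no internal proof to compare against, and your job is really to reproduce (or supply) the standard argument.

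Your outline is correct and is indeed the standard two-step descent argument: both $p\colon G\times X\to G\times_H X$ (quotient by the free anti-diagonal $H$-action) and $q\colon G\times X\to X$ (quotient by the free left $G$-action) are free-quotient maps, and both are $G\times H$-equivariant for the action $(g',h)\cdot(g,x)=(g'gh^{-1},hx)$, with residual $G$-action on $G\times_H X$ and residual $H$-action on $X$ the given ones. The quotient-equivalence (descent for free actions) in the Soergel--Virk--Wendt formalism then gives the two equivalences you write down, and composing them produces the induction equivalence. You also correctly put your finger on the one genuinely fiddly point: $j\colon X\to G\times X,\ x\mapsto(e,x)$, is \emph{not} $H$-equivariant for the anti-diagonal $H$-action, so $j^*$ does not naively land in $\DM_H(X)$. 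The cleanest way to say what you are gesturing at is to pass through the diagonal embedding $H\hookrightarrow G\times H,\ h\mapsto(h,h)$: for the resulting $H$-action $h\cdot(g,x)=(hgh^{-1},hx)$ on $G\times X$ the section $j$ \emph{is} $H$-equivariant, and restriction of groups along this diagonal followed by $j^*$ is precisely what the notation $(i,s)^*$ packages. Writing the quasi-inverse of $q^*$ as ``restriction along $H\xrightarrow{(\mathrm{id},\mathrm{id})}G\times H$, then $j^*$'' would turn your informal discussion of the two competing $H$-actions into a precise statement. With that amendment the argument is complete modulo the quotient-equivalence inputs, which are exactly the foundational results the cited reference provides.
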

\begin{proof}
See \cite[Proposition I.7.4]{soergel_equivariant_2018}.
\end{proof}
\begin{remark}
In \cite{soergel_equivariant_2018} the language of derivators is used and it is shown that $\DM_G(X,\Q)$ admits an enhancements as stable derivator. In \cite{richarz_intersection_2020}, \cite{richarz_motivic_2021} and \cite{richarz_tate_2021} Richarz--Scholbach provide a similar construction in the language of $\infty$-categories which shows that $\DM_G(X,\Q)$ admits an enhancement as stable $\infty$-category.
\end{remark}

\subsection{Equivariant mixed Tate motives} There is also an equivariant version of the category of mixed Tate motives defined in Section~\ref{sec:tatemotives}. For the remainder of the section let $k=\overline{\F}_p.$
\begin{definition}
The category of \emph{equivariant mixed Tate motives} on a  point $$\DTM_G(k,\Q)\subset\DM_G(k,\Q)$$
is the full subcategory of objects $M$ such that $\For(M)\in \DTM(k,\Q),$
\end{definition}
We now explain how the explicit description of this category in \cite[Theorem II.3.1]{soergel_equivariant_2018} can be obtained using the weight complex functor.
\begin{proposition}
There is a weight structure $w$ on $\DTM_G(k,\Q)$ such that
\begin{align*}
    M\in \DTM_G(k,\Q)^{w\geq 0} &\iff \For(M)\in \DTM(k,\Q)^{w\geq 0}\text{ and }\\
    M\in \DTM_G(k,\Q)^{w\leq 0} &\iff \For(M)\in \DTM(k,\Q)^{w\leq 0}.
\end{align*}
\end{proposition}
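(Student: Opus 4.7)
The plan is to apply Proposition~\ref{prop:defineweightstructure} to the collection
$$\Tt=\{\un(n)[2n] \mid n\in\Z\}\subset\DTM_G(k,\Q),$$
which will simultaneously produce the weight structure and identify its heart as $\langle\Tt\rangle_{\cong,\inplus,\oplus}$. The $\For$-characterisation should then follow from weight-exactness of $\For$ in one direction and uniqueness of weight decompositions in the other.

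First I would verify that $\Tt$ is negative. By the equivariant higher-Chow identification~\eqref{eqn:equivariantmotiviccomologychowgroups},
$$\Hom_{\DM_G(k,\Q)}(\un(n)[2n],\un(m)[2m+i])\cong\Chow_G^{m-n}(\poi,-i)_\Q,$$
which vanishes for $i>0$ since higher Chow groups vanish in negative simplicial index (Bloch's cycle complex is concentrated in non-positive cohomological degree). Next I would show that $\Tt$ generates $\DTM_G(k,\Q)$ under isomorphisms, direct summands and triangles. One inclusion is immediate from the definition of $\DTM_G(k,\Q)$ via $\For$. For the other, I would employ the Borel construction: approximate $BG$ by $G$-free open subsets $V\setminus S$ of representations of $G$ and combine with the induction equivalence~\eqref{eq:inductionequivalence}. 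For split reductive $G$ the quotients $(V\setminus S)/G$ can be chosen to admit affine pavings, so by Proposition~\ref{prop:affinepavingpuretate} they have pure Tate motives; a standard limit/cellular-filtration argument then builds every object of $\DTM_G(k,\Q)$ out of $\Tt$. Alternatively, one may cite the explicit description in \cite[Theorem~II.3.1]{soergel_equivariant_2018}.

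With the hypotheses of Proposition~\ref{prop:defineweightstructure} established, we obtain a bounded weight structure on $\DTM_G(k,\Q)$ whose heart is $\langle\Tt\rangle_{\cong,\inplus,\oplus}$. Since $\For(\Tt)$ lies in the non-equivariant heart, $\For$ is weight-exact, giving the implication $\Rightarrow$ in both equivalences. For the converse, suppose $\For(M)\in\DTM(k,\Q)^{w\leq 0}$. Take the weight triangle $A\to M\to B\to A[1]$ with $A\in\DTM_G(k,\Q)^{w\geq 1}$ and $B\in\DTM_G(k,\Q)^{w\leq 0}$ from Definition~\ref{def:weightstructure}(3). Applying $\For$ yields a weight decomposition of $\For(M)$; the uniqueness encoded in the non-equivariant orthogonality axiom forces $\For(A)\cong 0$. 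Conservativity of $\For$ on $\DTM_G(k,\Q)$, which can be checked along the Borel approximation, then gives $A=0$ and hence $M\cong B\in\DTM_G(k,\Q)^{w\leq 0}$. The $w\geq 0$ implication is symmetric.

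The main obstacle is the generation step: $\DTM_G(k,\Q)$ is defined tautologically by a condition on $\For$ rather than by generators, so one must genuinely reconstruct objects with mixed-Tate forget out of the $\un(n)[2n]$. The Borel-approximation strategy is standard for split reductive groups but requires extra care (or appeal to \cite{soergel_equivariant_2018}) in full generality. A secondary concern is the conservativity of $\For$ used at the end; if that turns delicate, one can bypass it by instead lifting the non-equivariant weight decomposition of $\For(M)$ along $\For$ and verifying equivariance of the lift directly.
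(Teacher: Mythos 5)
The paper gives no proof of its own here: it simply invokes \cite[Proposition II.4.10]{soergel_equivariant_2018}. So you are supplying an argument the paper deliberately outsources, which is a reasonable exercise, but it has a genuine gap.

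The gap is the choice of generators. You apply Proposition~\ref{prop:defineweightstructure} to $\Tt=\{\un(n)[2n]\}$, but when $G$ is not connected this collection does \emph{not} generate $\DTM_G(k,\Q)$ under isomorphisms, direct summands and triangles. The category $\DTM_G(k,\Q)$ is defined by a condition on $\For$, and there are equivariant objects whose forgetful image is Tate but which carry a nontrivial $\pi_0(G)$-action and hence do not lie in the thick triangulated subcategory generated by the trivially-equivariant Tate twists. The simplest instance is $G=\Z/2$: the sign local system on a point has $\For=\un$, so it lies in $\DTM_{\Z/2}(k,\Q)$, but it is not built from copies of $\un(n)[2n]$ with trivial action. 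This is precisely why Proposition~\ref{prop:heartofweightstrutureequivariantmixedtate} describes the heart as generated by the objects $\Ind^{G}_{G^0}(\Q)(n)[2n]$ rather than by $\un(n)[2n]$. Your generation step via affine pavings of Borel approximations is also calibrated to the connected (or split reductive) case and does not recover the extra $\pi_0(G)$-twists; the alternative you mention, citing \cite[Theorem~II.3.1]{soergel_equivariant_2018}, is in tension with your claimed heart because that theorem makes the $\Ind^{G}_{G^0}(\Q)$-generators explicit.

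The fix is to take $\Tt=\{\Ind^G_{G^0}(\Q)(n)[2n]\mid n\in\Z\}$. Negativity of this enlarged $\Tt$ still reduces, via the transfer argument of \cite[Section A.2.2]{soergel_equivariant_2018}, to vanishing of equivariant higher Chow groups of $G^0$ in positive degree, exactly as you compute. With the corrected generators, your strategy for the $\For$-characterisation is sound: $\For$ is weight-exact because it preserves the heart and the weight structure is bounded; for the converse, applying $\For$ to a weight triangle for $M$ and using orthogonality twice (the map $\For(A)\to\For(M)$ vanishes, so $\For(A)[1]$ is a summand of $\For(B)\in\DTM(k,\Q)^{w\leq 0}$, whence $\For(A)\in\DTM(k,\Q)^{w\geq 1}\cap\DTM(k,\Q)^{w\leq -1}=0$) together with conservativity of $\For$ does give $A=0$. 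You should, however, flag that conservativity of $\For$ on $\DTM_G(k,\Q)$ requires its own argument via the Borel construction; it is true but not free.
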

\begin{proof}
This is \cite[Proposition II.4.10]{soergel_equivariant_2018}.
\end{proof}
Denote by $G^0\subset G$ the connected component of the identity. Then one can consider the object $\Ind^G_{G^0}(\Q)\in \DM_G(k)$ which plays the role of the local system on $BG$ with free action of the component group $\pi_0(G)=G/G^0.$ The heart of $w$ admits the following explicit description by  \cite[Proposition II.4.5]{soergel_equivariant_2018}.
\begin{proposition}\label{prop:heartofweightstrutureequivariantmixedtate}
The heart of the weight structure on $\DTM_G(k,\Q)$ is generated by the objects $\Ind^G_{G^0}(\Q)(n)[2n]$ with respect to isomorphism, direct summands and finite direct sums
$$\DTM_G(k,\Q)^{w=0}=\langle \Ind^G_{G^0}(\Q)(n)[2n] \mid n \in \Z \rangle_{\cong,\inplus,\oplus}\subset \DTM_G(k,\Q).$$
\end{proposition}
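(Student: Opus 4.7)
The plan is to apply Proposition~\ref{prop:defineweightstructure} to the family $\Tt = \{\Ind^G_{G^0}(\Q)(n)[2n] : n\in\Z\}$, producing a weight structure on $\DTM_G(k,\Q)$ whose heart is $\langle\Tt\rangle_{\cong,\inplus,\oplus}$, and then identifying this structure with the weight structure $w$ of the preceding proposition. Three things need to be checked: every object of $\Tt$ lies in $\DTM_G(k,\Q)^{w=0}$, the family $\Tt$ is negative, and $\Tt$ generates $\DTM_G(k,\Q)$ under isomorphisms, direct summands and triangles.

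The first point uses that $G$ is linear algebraic, so $\pi_0(G) = G/G^0$ is a finite étale $k$-scheme. The induction equivalence \eqref{eq:inductionequivalence} combined with proper base change along the finite étale map $G/G^0 \to \poi$ yields
\[
    \For \bigl(\Ind^G_{G^0}(\Q)(n)[2n]\bigr) \cong \bigoplus_{\pi_0(G)} \Q(n)[2n],
\]
which is pure Tate of weight~$0$, so by the characterisation of $w$ the family $\Tt$ lies in the heart.

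For negativity, the adjunction $\Ind^G_{G^0} \dashv \For^G_{G^0}$ together with the identification above reduces the Hom computation to
\[
    \Hom_{\DM_G(k,\Q)}\bigl(\Ind\Q(n)[2n], \Ind\Q(m)[2m+i]\bigr) \cong \bigoplus_{\pi_0(G)} \Chow^{m-n}_{G^0}(\poi, -i)_{\Q},
\]
via \eqref{eqn:equivariantmotiviccomologychowgroups}. The required vanishing for $i<0$ is the purity of higher Chow groups of the classifying space $BG^0$ for the \emph{connected} algebraic group $G^0$, which follows from Totaro's approximation of $BG^0$ by finite products of Grassmannians together with Proposition~\ref{prop:affinepavingpuretate}.

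For generation I would exploit that $\For$ is conservative on $\DTM_G(k,\Q)$ by definition and that $\DTM(k,\Q)$ is generated by the $\Q(n)[2n]$ under triangles, summands and isomorphisms. Given $M\in\DTM_G(k,\Q)$, a bounded filtration of $\For(M)$ by shifts of $\Q(n)[2n]$ lifts step by step via the counit $\Ind\For(-) \to \id$, using that each $\Q(n)[2n]$ is a direct summand of $\For\Ind\Q(n)[2n]$. Proposition~\ref{prop:defineweightstructure} then yields a weight structure whose heart is $\langle\Tt\rangle_{\cong,\inplus,\oplus}$, and this agrees with the heart from the preceding proposition since both consist exactly of objects whose image under $\For$ lies in $\DTM(k,\Q)^{w=0}$. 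The main obstacle is the purity of $BG^0$ together with the lifting-via-counit argument for generation; the former requires aligning Totaro's classical construction with the Soergel--Virk--Wendt formalism, but once \eqref{eqn:equivariantmotiviccomologychowgroups} is in hand the translation is essentially mechanical.
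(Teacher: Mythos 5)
The paper's own ``proof'' is a bare citation of \cite[Proposition II.4.5]{soergel_equivariant_2018}, so you are supplying an argument where the paper gives none, which is worthwhile --- but as written your argument has three genuine problems.

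First, the negativity check is misdirected. Proposition~\ref{prop:defineweightstructure} requires $\Tt$ to be \emph{negative}, i.e.\ $\Hom(M,N[n])=0$ for $n>0$. Unwinding your identification, this needs $\Chow^{m-n}_{G^0}(\poi,-i)_\Q=0$ for $i>0$, and since higher Chow groups vanish in negative simplicial degree by definition, this is automatic; no purity of $BG^0$ is involved. The nontrivial vanishing you invoke (``vanishing for $i<0$'', i.e.\ $\Chow^\bullet_{G^0}(\poi,j)_\Q=0$ for $j>0$) is the \emph{positivity} direction, which together with negativity is the \emph{tilting} assertion of Proposition~\ref{prop:tiltingpointequivariant}; that is where the purity over $k=\overline{\F}_p$ (Lemma~\ref{lem:motiviccohomologyofpoint}) enters, but it is not needed here. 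Also, when you do cite the purity of $BG^0$, the reduction via Totaro/Grassmannians only reduces the claim to the point, so the statement is false over a general base field and you must explicitly invoke $k=\overline{\F}_p$.

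Second, the generation step does not type-check. You appeal to a counit $\Ind\For(-)\to\id$, but $\For$ lands in the non-equivariant category $\DM(\poi)$ while $\Ind=\Ind^G_{G^0}$ has source $\DM_{G^0}(\poi)$, so the composite $\Ind\For$ is not defined. Presumably you mean $\Ind^G_{G^0}\Res^G_{G^0}$, and then the correct mechanism is: show $M$ is rationally a direct summand of $\Ind^G_{G^0}\Res^G_{G^0}(M)$ (by averaging over $\pi_0(G)$), reduce generation to the connected case, and prove generation for $\DTM_{G^0}(k,\Q)$ separately. As written the ``lift step by step'' sentence is not an argument, and the reduction to the connected case --- which is the real content --- is not addressed.

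Third, the concluding heart identification is circular. You assert that both $\langle\Tt\rangle_{\cong,\inplus,\oplus}$ and the heart of the preceding proposition's weight structure $w$ ``consist exactly of objects whose image under $\For$ lies in $\DTM(k,\Q)^{w=0}$.'' The equality
\[
\langle\Tt\rangle_{\cong,\inplus,\oplus}=\{M\mid\For(M)\in\DTM(k,\Q)^{w=0}\}
\]
is precisely the proposition you set out to prove. A non-circular route is: establish $\Tt\subseteq\DTM_G(k,\Q)^{w=0}$, deduce $\Cc^{w'\leq 0}\subseteq\Cc^{w\leq 0}$ and $\Cc^{w'\geq 0}\subseteq\Cc^{w\geq 0}$ for the weight structure $w'$ produced by Proposition~\ref{prop:defineweightstructure}, and then use the orthogonality axiom to upgrade these inclusions to equalities, so $w'=w$ and in particular the hearts coincide.
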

For the moment, assume that $G=G_0$ is connected. By \cite[Proposition I.7.6]{soergel_equivariant_2018} the categories $\DM_G$ just depend on the quotient $G/R_u(G)$ of $G$ by its unipotent radical. Hence, we can assume that $G$ is reductive. Denote by $T$ a maximal torus in $G$ and by $W=N_G(T)/T$ the Weyl group. Let $S=\operatorname{Sym}(X(T)_\Q)$ be the symmetric algebra of the rationalized character lattice of $T.$ The algebra $S$ is isomorphic to a polynomial ring in $\operatorname{rank}(T)$ many variables and graded where we put $X(T)$ in degree one.

Since we are only considering rational coefficients, the $T$- and $G$-equivariant Chow rings agree with equivariant cohomology rings. In particular, the Chern class map induces isomorphisms of graded algebras
\begin{align}
    S\cong \Chow^\bullet_T(k)_\Q \text{ and }
    S^W\cong \Chow^\bullet_T(k)_\Q^W\cong\Chow^\bullet_G(k)_\Q\label{eq:equivariantchowofpoint},
\end{align}
see \cite{raskind_chow_1999} or \cite[Section 3.2]{edidin_equivariant_1998}. Similarly, for higher Chow groups
\begin{align}\label{eq:higherequivariantchowofpoint}
    \Chow^\bullet_G(k,i)_\Q\cong(\Chow^\bullet_T(k,i)_\Q)^W\cong (S\otimes\Chow^\bullet(k,i)_\Q)^W
\end{align}
using \cite[Theorem 1.5]{krishna_equivariant_2017} and \cite[Theorem 5.7]{krishna_higher_2013}. 

If $G$ is not connected, we consider the extension algebra
\begin{align*}
    E&=\bigoplus_{n\in \Z}\Hom_{\DM_G(k,\Q)}(\Ind^G_{G^0}(\Q),\Ind^G_{G^0}(\Q)(n)[2n])
\end{align*}
which can be thought of as the Chow ring of $BG$ with coefficients in the local system given by the regular representation of $\pi_0(G).$ For an explicit description denote by $S^W\ltimes \Q[\pi_0(G)]$ the \emph{twisted group algebra}, see \cite[A.2.3]{soergel_equivariant_2018}, which as a graded vector space is just the tensor product  $S^W\otimes \Q[\pi_0(G)]$ with $\Q[\pi_0(G)]$ concentrated in degree zero.
\begin{proposition}\label{prop:extofpoint}
There is a natural isomorphism 
$E\cong S^W\ltimes \Q[\pi_0(G)].$
\end{proposition}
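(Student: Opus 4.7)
The plan is to reduce to the connected case via the induction/restriction adjunction and then transport the resulting algebra structure into the twisted group algebra. Concretely, I would first rewrite
\[
E \cong \bigoplus_{n\in\Z}\Hom_{\DM_{G^0}(k,\Q)}\bigl(\Q,\Res^G_{G^0}\Ind^G_{G^0}\Q(n)[2n]\bigr)
\]
using the adjunction $(\Ind^G_{G^0},\Res^G_{G^0})$. Thus the computation splits into two independent parts: the underlying $\DM_{G^0}$-object $\Res^G_{G^0}\Ind^G_{G^0}\Q$, and the multiplicative structure coming from composition in $\DM_G$.

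For the first part, I would use the induction equivalence of \eqref{eq:inductionequivalence} applied to the inclusion $G^0\hookrightarrow G$: since $G/G^0=\pi_0(G)$ is finite and $G$ is $G^0$-equivariantly (for the right-translation action) identified with $\pi_0(G)\times G^0$, base change along $G^0\hookrightarrow G$ yields a canonical decomposition
\[
\Res^G_{G^0}\Ind^G_{G^0}\Q \;\cong\; \bigoplus_{g\in\pi_0(G)}\Q
\]
in $\DM_{G^0}(k,\Q)$, indexed by the components of $G$. Combined with \eqref{eq:equivariantchowofpoint} and \eqref{eq:higherequivariantchowofpoint} applied to the reductive quotient of $G^0$ (using \cite[Proposition I.7.6]{soergel_equivariant_2018} to ignore the unipotent radical), each summand contributes
\[
\bigoplus_n\Hom_{\DM_{G^0}}(\Q,\Q(n)[2n]) \;\cong\; \Chow^\bullet_{G^0}(k)_\Q \;\cong\; S^W.
\]
This gives the isomorphism $E\cong S^W\otimes\Q[\pi_0(G)]$ at the level of graded vector spaces.

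For the second part I would unravel the algebra structure on $E$ in terms of composition. The degree-zero idempotents $[g]\in\Q[\pi_0(G)]$ correspond to the inclusions/projections of the summand indexed by $g$, and composition of two such is governed by the group law in $\pi_0(G)$, producing the $\Q[\pi_0(G)]$-factor. The $S^W$-factor in degree $g$ is the image of the conjugation action of any lift $\tilde g\in G$ of $g$ on $\Hom_{\DM_{G^0}}(\Q,\Q(\bullet)[2\bullet])=S^W$; since $\tilde g$ normalises $G^0$ and hence acts on $T$ (up to $W$) and on $X(T)_\Q$, one obtains that left-multiplication by $g\in\pi_0(G)$ on the $S^W$-summand indexed by $1$ equals right-multiplication by $g$ on the $S^W$-summand indexed by $g$ twisted by this conjugation action. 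This is precisely the defining relation of the twisted group algebra $S^W\ltimes\Q[\pi_0(G)]$ as in \cite[A.2.3]{soergel_equivariant_2018}.

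The only genuinely delicate point is the naturality and compatibility in step two: one must check that the composition of two elements in $E$, after transport along $\Res^G_{G^0}\Ind^G_{G^0}\Q\cong\bigoplus_g\Q$ and along the adjunction, indeed picks up the conjugation action of $\pi_0(G)$ on $S^W$ rather than the trivial action. This is essentially a base-change/projection-formula computation in the six-functor formalism of $\DM_G$, and I expect it can be carried out either by a direct diagram chase using the isomorphism $G\cong\pi_0(G)\times G^0$ as a right $G^0$-variety, or by reduction to the analogous (and already documented) statement in equivariant Chow theory. Once this compatibility is established, the remaining verifications (gradings, unit, associativity) are formal.
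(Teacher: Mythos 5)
Your proposal follows essentially the same route as the paper: the paper reduces to the connected case via \eqref{eq:equivariantchowofpoint} and then invokes the ``transfer argument for finite group torsors'' from \cite[Section A.2.2]{soergel_equivariant_2018}, and what you write out---the $(\Ind^G_{G^0},\Res^G_{G^0})$ adjunction, the Mackey-type splitting $\Res^G_{G^0}\Ind^G_{G^0}\Q\cong\bigoplus_{\pi_0(G)}\Q$, and the identification of the twist with the $\pi_0(G)$-conjugation action on $S^W$---is precisely an unpacking of that transfer argument. The one step you flag as ``genuinely delicate'' (verifying that composition realises the twisted, not the untwisted, multiplication) is exactly the content the paper delegates to \cite{soergel_equivariant_2018}, so leaving it as a base-change/projection-formula check is in line with the paper's own level of detail.
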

\begin{proof}
If $G$ is connected this is \eqref{eq:equivariantchowofpoint}. Otherwise the statements follow from a transfer argument for finite group torsors, see \cite[Section A.2.2]{soergel_equivariant_2018}.
\end{proof}
\begin{proposition}\label{prop:tiltingpointequivariant}
The collection of objects $\{\Ind^G_{G^0}(\Q)(n)[2n]\mid n\in \Z\}$ is tilting.
\end{proposition}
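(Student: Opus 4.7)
The plan is to show directly that
\[
\Hom_{\DM_G(k,\Q)}\bigl(\Ind^G_{G^0}(\Q),\,\Ind^G_{G^0}(\Q)(m)[2m+i]\bigr)=0
\]
for every $m\in\Z$ and every $i\neq 0$; since Tate twist is an autoequivalence commuting with shifts, this is equivalent to the tilting condition of Definition~\ref{def:tilting} applied to the collection $\{\Ind^G_{G^0}(\Q)(n)[2n]\mid n\in\Z\}$.

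First, I would reduce to the case that $G^0$ is reductive. By \cite[Proposition I.7.6]{soergel_equivariant_2018}, as recalled in the discussion preceding Proposition~\ref{prop:extofpoint}, the category $\DM_G(k,\Q)$ only depends on $G/R_u(G)$, so we may replace $G$ by this quotient and assume $G^0$ is reductive without affecting the Hom groups.

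Next, I would use the $(\Ind^G_{G^0},\Res^G_{G^0})$-adjunction to rewrite the Hom group as
\[
\Hom_{\DM_{G^0}(k,\Q)}\bigl(\Q,\,\Res^G_{G^0}\Ind^G_{G^0}(\Q)(m)[2m+i]\bigr).
\]
Since $\Res^G_{G^0}\Ind^G_{G^0}(\Q)$ is a direct sum of $|\pi_0(G)|$ copies of $\Q$ (this is the input that makes the transfer argument of \cite[Section A.2.2]{soergel_equivariant_2018} work in the proof of Proposition~\ref{prop:extofpoint}), the problem reduces to showing that
\[
\Hom_{\DM_{G^0}(k,\Q)}\bigl(\Q,\,\Q(m)[2m+i]\bigr)=0 \text{ for all }m\in\Z,\;i\neq 0.
\]
By \eqref{eqn:equivariantmotiviccomologychowgroups} this group is $\Chow^m_{G^0}(k,-i)_\Q$, and by \eqref{eq:higherequivariantchowofpoint} it is isomorphic to the $W$-invariants of $S\otimes\Chow^\bullet(k,-i)_\Q$. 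Finally, Lemma~\ref{lem:motiviccohomologyofpoint} (which is where the hypothesis $k=\overline{\F}_p$ enters crucially) shows that $\Chow^\bullet(k,j)_\Q=0$ for $j\neq 0$, so the whole thing vanishes for $i\neq 0$.

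The only genuinely substantive step is the last one: the purity of the higher Chow groups of a point over $\overline{\F}_p$. All other ingredients are bookkeeping (adjunction, unipotent-radical reduction, and the explicit description of $\Res\circ\Ind$); the vanishing of negative/positive higher Chow groups of the point is what forces the $\Hom$-groups to concentrate in the single cohomological degree $i=0$ and hence makes the family tilting rather than merely negative.
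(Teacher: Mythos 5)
Your proof is correct and follows essentially the same route as the paper: the paper's proof likewise handles the connected case first via \eqref{eq:higherequivariantchowofpoint} (the reference to \eqref{eq:equivariantchowofpoint} there is a small mislabelling) and Lemma~\ref{lem:motiviccohomologyofpoint}, and then cites \cite[Section A.2.2]{soergel_equivariant_2018} for the non-connected case. The one thing you do differently is to unpack the non-connected reduction explicitly via the $(\Ind,\Res)$-adjunction and the Mackey formula $\Res^G_{G^0}\Ind^G_{G^0}(\Q)\cong\bigoplus_{\pi_0(G)}\Q$, whereas the paper treats the transfer argument as a black box; your unpacking is a sound replacement, modulo the (true, but worth flagging) fact that for the finite-index inclusion $G^0\subset G$ the induction functor is an ambidextrous adjoint to restriction, which is what licenses writing the $\Hom$-group on the $G^0$-side.
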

\begin{proof}
Assume first that $G$ is connected. By \eqref{eq:equivariantchowofpoint} it suffices to show that the higher Chow groups in \eqref{eq:equivariantchowofpoint} vanish for $i\neq0.$ This holds by Lemma~~\ref{lem:motiviccohomologyofpoint} using  $k=\overline{\F}_p.$ Again, the statement for $G$ not connected follows from \cite[Section A.2.2]{soergel_equivariant_2018}.
\end{proof}

With Corollary~\ref{cor:triangulatedcatequalsmodulecat} we obtain the following explicit description of $\DTM_G(k,\Q).$
\begin{theorem}
The weight complex functor induces an equivalence
$$t: \DTM_G(k,\Q)\to\DperfZ(S^W\ltimes \Q[\pi_0(G)]).$$
\end{theorem}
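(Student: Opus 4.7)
The plan is to deduce the theorem as a direct application of Corollary~\ref{cor:triangulatedcatequalsmodulecat} in the graded form, with $\Cc = \DTM_G(k,\Q)$, the collection $\Tt = \{\Ind^G_{G^0}(\Q)\}$, and the autoequivalence $\langle 1\rangle = (1)[2]$, so that $\widehat{\Tt} = \{\Ind^G_{G^0}(\Q)(n)[2n] \mid n \in \Z\}$. The three things I need to verify are: (i) $\widehat{\Tt}$ is tilting in $\DTM_G(k,\Q)$; (ii) $\langle \widehat{\Tt}\rangle_{\cong,\inplus,\Delta} = \DTM_G(k,\Q)$; (iii) the graded endomorphism algebra $\End^\bullet_\Cc(\Tt)$ under the autoequivalence $(1)[2]$ coincides with $S^W \ltimes \Q[\pi_0(G)]$; and (iv) an appropriate $\infty$-categorical enhancement exists.

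For (i), this is exactly Proposition~\ref{prop:tiltingpointequivariant}, which states that the collection $\{\Ind^G_{G^0}(\Q)(n)[2n]\mid n\in \Z\}$ is tilting, meaning all Hom-groups vanish outside cohomological degree zero. For (iii), this is precisely the content of Proposition~\ref{prop:extofpoint}: unpacking the definition of $\End^\bullet_\Cc(\Tt)$ in \eqref{eq:gradedend} with $\langle 1\rangle = (1)[2]$ gives exactly
\[
\End^\bullet_{\DTM_G(k,\Q)}(\Ind^G_{G^0}(\Q)) = \bigoplus_{n\in\Z}\Hom(\Ind^G_{G^0}(\Q),\Ind^G_{G^0}(\Q)(n)[2n]) \cong S^W\ltimes \Q[\pi_0(G)].
\]

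For (ii), I would argue as follows. By Proposition~\ref{prop:heartofweightstrutureequivariantmixedtate}, the heart of the weight structure on $\DTM_G(k,\Q)$ is generated by $\widehat{\Tt}$ under isomorphism, direct summands, and finite direct sums. Since the weight structure is bounded (it is the restriction of the weight structure whose defining conditions are cut out from the non-equivariant Tate case via $\For$, and the non-equivariant case is clearly bounded), the category $\DTM_G(k,\Q)$ is generated by its heart as a triangulated category, by \cite[Corollary 1.5.7]{bondarko_weight_2010}. Therefore $\langle \widehat{\Tt}\rangle_{\cong,\inplus,\Delta}$ equals all of $\DTM_G(k,\Q)$. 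Condition (iv) follows from the discussion after \eqref{eq:inductionequivalence} and the Remark in Section~\ref{sec:equivariantmotives} recalling that $\DM_G$ admits an enhancement as a stable $\infty$-category by Richarz--Scholbach, and $\DTM_G(k,\Q)$ inherits this enhancement as a full subcategory.

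With (i)--(iv) in hand, Corollary~\ref{cor:triangulatedcatequalsmodulecat} immediately yields the equivalence
\[
t: \DTM_G(k,\Q) = \langle \widehat{\Tt}\rangle_{\cong,\inplus,\Delta} \xrightarrow{\sim} \DperfZ(\End^\bullet_\Cc(\Tt)) = \DperfZ(S^W\ltimes \Q[\pi_0(G)]).
\]
The main potential obstacle is verifying (ii) rigorously: one has to be careful that the boundedness of the weight structure really does give that the heart generates the whole subcategory triangulated-ly — but this is a standard consequence of the axioms of a bounded weight structure and was recorded in Bondarko's foundational paper. The rest of the argument is a clean application of the weight-complex machinery developed in Section~\ref{sec:weightstructuresandtilting}.
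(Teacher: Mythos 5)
Your proposal is correct and matches the paper's (terse) argument exactly: the paper simply states that the theorem follows from Corollary~\ref{cor:triangulatedcatequalsmodulecat} together with Proposition~\ref{prop:tiltingpointequivariant}, and you have correctly unpacked all the hypotheses (tilting, generation via the bounded weight structure and Proposition~\ref{prop:heartofweightstrutureequivariantmixedtate}, identification of the graded endomorphism algebra via Proposition~\ref{prop:extofpoint}, and existence of the $\infty$-categorical enhancement). You supply slightly more detail than the paper does, but the route is the same.
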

Since $S^W\ltimes \Q[\pi_0(G)]$ has finite cohomological dimension,  $\DperfZ(S^W\ltimes \Q[\pi_0(G)])$ is just the bounded derived category of finitely generated graded modules. A similar result was shown in \cite[Theorem II.3.1]{soergel_equivariant_2018} using slightly different arguments.
\subsection{Gradings} Let $\ell\neq p$ be a prime. The categories of equivariant mixed Tate motives $\DTM_G(k)$ can be regarded as \emph{graded versions} of the categories of equivariant sheaves $\DerG_G(\poi,\Q_\ell)$ defined by  Bernstein--Lunts \cite{bernstein_equivariant_1994}.
Under the realisation functor $\Real_\ell,$ see \eqref{eq:realisationfunctor}, the Tate motive $\Q(1)$ gets mapped to the Tate module
$$\Real_\ell(\Q(1))=\Q_\ell(1)=\varprojlim \mu_{\ell^n}\otimes_{\Z_\ell}\Q_\ell$$
which can be identified with $\Q_\ell$ by choosing a compatible system of $\ell^n$-th roots of unity in $k=\overline{\F}_p.$ This induces a natural equivalence of functors
\begin{align}
    \Real_\ell\circ (1)\to\Real_\ell.\label{eq:realforgetsgrading}
\end{align}
Hence, intuitively, the Tate twist $(1)$ can be regarded as a shift of grading and $\Real_\ell$ as a functor forgetting the grading. Restricted to mixed Tate motives, the functor $\Real_\ell$ becomes a \emph{degrading functor} in the sense of \cite[Section 4.3]{beilinson_koszul_1996}.
\begin{proposition}\label{prop:puretateonpointgrading}
The equivalence in \eqref{eq:realforgetsgrading} induces an isomorphism
$$\bigoplus_{n\in\Z}\Hom_{\DM_G(k,\Q_\ell)}(M,N(n))\to \Hom_{\DerG_G(k,\Q_\ell)}(\Real_\ell(M),\Real_\ell(N))$$ for $M,N\in \DTM_G(k).$ 
If $M,N\in\DTM_G(k)^{w=0}$ then all summands for $n\neq 0$ vanish and the functor $\Real_\ell$ gives an isomorphism
$$\Hom_{\DM_G(k,\Q_\ell)}(M,N)\to \Hom_{\DerG_G(k,\Q_\ell)}(\Real_\ell(M),\Real_\ell(N)).$$
\end{proposition}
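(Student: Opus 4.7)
The plan is to establish both claims by reducing to a family of generators of $\DTM_G(k,\Q)$ and then extending by dévissage. The natural transformation $\Real_\ell\circ(1)\simeq \Real_\ell$ from \eqref{eq:realforgetsgrading} yields a canonical map
$$\bigoplus_{n\in\Z}\Hom_{\DM_G(k,\Q_\ell)}(M,N(n))\longrightarrow \Hom_{\DerG_G(k,\Q_\ell)}(\Real_\ell(M),\Real_\ell(N)),$$
and both functors (in $M$ and in $N$) take distinguished triangles to long exact sequences, preserve direct summands, and are compatible with Tate twists in $N$ (which merely reindex the sum on the left while acting trivially on the right). The five-lemma together with two dévissages therefore reduces the claim to the case $M=\Ind^G_{G^0}(\Q)(a)[b]$ and $N=\Ind^G_{G^0}(\Q)(c)[d]$ with $a,b,c,d\in\Z$.

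For such generators the motivic side collapses. By bilinearity,
$$\Hom_{\DM_G}(\Ind(a)[b],\Ind(c)[d](n)) \cong \Hom_{\DM_G}(\Ind,\Ind(c-a+n)[d-b]),$$
and Proposition~\ref{prop:tiltingpointequivariant}---itself a consequence of the purity Lemma~\ref{lem:motiviccohomologyofpoint} and the equivariant higher Chow description \eqref{eq:higherequivariantchowofpoint}---forces this group to vanish unless $d-b=2(c-a+n)$. Hence the direct sum on the left is zero when $d-b$ is odd, and otherwise contains exactly one nonzero summand, equal via Proposition~\ref{prop:extofpoint} to the degree-$(d-b)/2$ component of $S^W\ltimes\Q[\pi_0(G)]$. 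When $M,N\in\DTM_G(k,\Q)^{w=0}$, so that (by Proposition~\ref{prop:heartofweightstrutureequivariantmixedtate}) $M$ and $N$ are finite direct sums of objects $\Ind(a)[2a]$, this collapse occurs only at $n=0$; this already yields the second assertion.

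On the $\ell$-adic side, compatibility of $\Real_\ell$ with shifts and triviality of the Tate twist after realisation give
$$\Hom_{\DerG_G(k,\Q_\ell)}(\Real_\ell(\Ind)[b],\Real_\ell(\Ind)[d]) \cong \Ext^{d-b}_{\DerG_G(k,\Q_\ell)}(\Real_\ell(\Ind),\Real_\ell(\Ind)).$$
A computation parallel to Proposition~\ref{prop:extofpoint} but with $\Q_\ell$-coefficients---Borel's description of $H^{\bullet}_{G^0}(\poi,\Q_\ell)$ as the $\Q_\ell$-form of $S^{W^0}$ combined with the transfer argument for $\pi_0(G)$---identifies this $\Ext$-group with the degree-$(d-b)/2$ component of $(S^W\ltimes\Q[\pi_0(G)])\otimes_\Q\Q_\ell$, and with zero when $d-b$ is odd. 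Under $\Real_\ell$ the two graded pieces correspond via the equivariant cycle class map from rational Chow groups to rational $\ell$-adic cohomology, which is an isomorphism over $k=\overline{\F}_p$ by the comparison results of Totaro and Edidin--Graham.

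The hardest step will be handling disconnected $G$: one must check that the transfer argument of \cite[Section A.2.2]{soergel_equivariant_2018} used on the motivic side to produce the twisted group algebra structure is preserved by $\Real_\ell$, so that the $\ell$-adic side really matches the extension of scalars of $S^W\ltimes\Q[\pi_0(G)]$ as an algebra (and not merely as a graded vector space). Once this compatibility is in place, the isomorphism on generators is immediate from the two parallel computations above, and the dévissage argument of the first paragraph completes the proof of the first assertion.
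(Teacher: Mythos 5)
Your proof takes essentially the same route as the paper: dévissage reduces to the generators $\Ind^G_{G^0}(\Q)(a)[b]$, the motivic side is computed from Propositions~\ref{prop:extofpoint} and~\ref{prop:tiltingpointequivariant} (vanishing of higher Chow groups over $\overline{\F}_p$), the $\ell$-adic side from Borel's description of equivariant cohomology as in \cite[Section 13.10]{bernstein_equivariant_1994}, and the two are matched by the equivariant cycle class map; the disconnected case is deferred to the transfer argument of \cite{soergel_equivariant_2018}, exactly as the paper does. The only difference is that you spell out the generator computation and the cycle-class comparison more explicitly than the paper, and you flag the compatibility of $\Real_\ell$ with the transfer as still to be checked — but the paper handles that step by the same citation without further justification, so your treatment is at the same level of rigor.
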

\begin{proof} First, by Proposition~\ref{prop:heartofweightstrutureequivariantmixedtate} and induction it suffices to show the statement for objects of the form $\Ind_{G_0}^G\Q(n)[m].$ If $G$ is connected
the homomorphims of objects of the form $\Ind_{G_0}^G\Q(n)[m]=\Q(n)[m]$ are described in terms of $S^W$ in both $\DM_G(k,\Q),$ see Proposition~\ref{prop:extofpoint}, and in $\DerG_G(\poi,\Q_\ell),$ see \cite[Section 13.10]{bernstein_equivariant_1994}, and the statement is easily seen to be true. The case that $G$ is not connected can be handled as described in \cite[Theorem A.2.8]{soergel_equivariant_2018}.
\end{proof}

\subsection{Pointwise Tate}\label{sec:pointwisetate}
We need a notion of local purity for motivic sheaves on a variety $X.$ That is, we want to consider motivic sheaves whose restriction to every point is pure or mixed Tate:
\begin{definition} Let $?\in \{*,!\}.$
An object $M\in \DM_G(X)$ is called \emph{$?$-pointwise mixed Tate} or \emph{$?$-pointwise pure Tate}, respectively, if for each point $i_x:\poi\to X$
\begin{align*}
    i_x^?\For M\in \DTM(k,\Q) \text{ or }
    i_x^?\For M\in \DTM(k,\Q)^{w=0} \text{, respectively.}
\end{align*}
Here the functor $i_x^?\For$ is the composition 
$$\DM_G(X,\Q)\stackrel{\For}{\to}\DM(X,\Q)\stackrel{i_x^?}{\to}\DM(k,\Q).$$ The object $M$ is called \emph{pointwise mixed (pure) Tate} if it is $*$- and $!$-pointwise mixed (pure) Tate.
\end{definition}
It is sometimes convenient to work with the following equivalent orbitwise definition.
\begin{proposition} Let $?\in\{*,!\}$ and $M\in \DM_G(X).$ Then $M$ is $?$-pointwise mixed or pure Tate if and only if for each orbit $\mathcal{O}\hookrightarrow X$
\begin{align*}
    (i,s)^*j^?M\in\DTM_H(k,\Q) \text{ or }
    (i,s)^*j^?M\in\DTM_H(k,\Q)^{w=0} \text{, respectively.}
\end{align*}
Here $j:G/H\cong\mathcal{O}\hookrightarrow X$ and the functor $(i,s)^*j^?$ is the composition 
$$\DM_G(X,\Q)\stackrel{j^?}{\to}\DM_G(G/H,\Q)\stackrel{(i,s)^*}{\to}\DM_H(\poi,\Q)$$ of pullback to the orbit and the induction equivalence \eqref{eq:inductionequivalence}.
\end{proposition}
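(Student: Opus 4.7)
The plan is to reduce the pointwise condition to the orbit condition by factoring the inclusion of a point through the orbit it lies on, and then to translate the resulting statement across the induction equivalence of~\eqref{eq:inductionequivalence}.

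First I would fix a point $x \in X$, let $\mathcal{O}$ be its $G$-orbit with stabilizer $H$, and factor the inclusion $\{x\} \hookrightarrow X$ as $i_x : \poi \to \mathcal{O}$ followed by $j : \mathcal{O} \hookrightarrow X$, so that for $? \in \{*,!\}$ and any $M \in \DM_G(X,\Q)$ one has $i_x^? \For M \cong i_x^? \For j^? M$. The key technical input is then the compatibility of the induction equivalence with forgetting equivariance and restricting to a point: under $(i,s)^* : \DM_G(\mathcal{O}, \Q) \xrightarrow{\sim} \DM_H(\poi, \Q)$, the non-equivariant pullback $i_x^? \For_G$ on $\DM_G(\mathcal{O}, \Q)$ corresponds to $\For_H : \DM_H(\poi, \Q) \to \DM(\poi, \Q)$. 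Concretely, the diagram
\begin{center}
\begin{tikzcd}
\DM_G(\mathcal{O}, \Q) \arrow[r, "(i{,}s)^*"] \arrow[d, "i_x^? \For_G"'] & \DM_H(\poi, \Q) \arrow[d, "\For_H"] \\
\DM(\poi, \Q) \arrow[r, equal] & \DM(\poi, \Q)
\end{tikzcd}
\end{center}
commutes up to natural isomorphism, essentially because the section $s$ used to construct the equivalence sends $\poi$ to the base point $[e, \poi]$ of $G \times_H \poi$, which corresponds to $x \in \mathcal{O}$ after identifying $\mathcal{O} \cong G/H$. Combining the two, I get a natural isomorphism $i_x^? \For M \cong \For_H (i,s)^* j^? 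M$.

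With this isomorphism in hand, the proposition becomes a direct consequence of the definitions. Indeed, the equivariant mixed Tate category $\DTM_H(k,\Q)$ is \emph{defined} as the preimage of $\DTM(k,\Q)$ under $\For_H$, and the weight structure on $\DTM_H(k,\Q)$ is characterised by $\For_H$ in exactly the same way (using the proposition recalled just before Proposition~\ref{prop:heartofweightstrutureequivariantmixedtate}). Therefore $(i,s)^* j^? M \in \DTM_H(k,\Q)$, respectively $\DTM_H(k,\Q)^{w=0}$, if and only if $\For_H(i,s)^* j^? M \cong i_x^? \For M$ lies in $\DTM(k,\Q)$, respectively $\DTM(k,\Q)^{w=0}$.

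Finally, since every orbit contains points and every point lies in a unique orbit, quantifying the above equivalence over all $x$ (equivalently, over all orbits $\mathcal{O}$) yields the claimed biimplication between the pointwise and orbitwise conditions. The only substantive step is the commutativity of the diagram above; I would prove it by unwinding the construction of $(i,s)^*$ from~\cite{soergel_equivariant_2018} and using that the induction equivalence is compatible with the six-functor formalism (in particular with the pullback along the inclusion of the base point), so this is essentially a base-change identity rather than a genuine difficulty.
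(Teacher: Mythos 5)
The paper does not prove this proposition; it states it and then remarks that \cite{soergel_equivariant_2018} uses the orbitwise formulation as the definition. So there is no in-paper argument to compare against, and your proposal has to stand on its own.

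Your outline is sound and captures the right reduction: factor $i_x$ through the orbit inclusion $j$ and its complementary point inclusion $\iota:\poi\to\mathcal{O}$, match the two pullbacks to the point across the induction equivalence, and use that $\DTM_H(k,\Q)$ and its heart are defined via $\For_H$ (so the two vertical conditions translate into one another). You also correctly, if implicitly, use $G$-equivariance of $M$ to make the statement independent of the choice of base point within an orbit.

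The one place you gloss over a real, if small, technical point is the case $?=!$. Your diagram has $i_x^!\For_G$ on the left and $\For_H\circ(i,s)^*$ on the right, but $(i,s)^*$ is built from a $*$-pullback along $s$, so that square commutes only up to a Tate twist: writing $d=\dim\mathcal{O}$, one has $\iota^!\cong\iota^*(-d)[-2d]$ (regular closed immersion of a point into the smooth variety $\mathcal{O}$), hence $\iota^!\For_G\cong\For_H\circ(i,s)^*\circ(-d)[-2d]$. Calling this ``essentially a base-change identity'' and leaving it there elides the twist. The fix is easy and should be spelled out: $(-d)[-2d]$ preserves both $\DTM(k,\Q)$ and its heart $\DTM(k,\Q)^{w=0}$ (it is the autoequivalence $\langle -d\rangle$ used to grade the weight structure), so membership in those subcategories is unchanged and the biimplication goes through for $?=!$ exactly as for $?=*$. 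With that paragraph added your argument is complete.
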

\begin{remark} In \cite{soergel_equivariant_2018} this equivalent orbitwise definition is used.
\end{remark}
Objects that are pointwise pure Tate have remarkable properties. They behave very similarly to pure Tate objects on a point, particulary if there are only finitely many $G$-orbits. They satisfy the following extension vanishing:
\begin{proposition}\label{prop:homvanishingpure} Assume that the $G$-action on $X$ has finitely many orbits. Let $M,N\in \DM_G(X)$ be $*$- and $!$-pointwise pure Tate. Then $\Hom_{\DM_G(X)}(M,N[n])=0$ for all $n\neq 0.$ 
\end{proposition}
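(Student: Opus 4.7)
The plan is to induct on the number of $G$-orbits in $X$, using the orbitwise characterization of pointwise pure Tateness given in the proposition just above.

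For the base case, $X$ consists of a single orbit $G/H$, so by the orbit characterization the induction equivalence \eqref{eq:inductionequivalence} sends $M$ and $N$ into the heart $\DTM_H(k,\Q)^{w=0}$. Proposition~\ref{prop:tiltingpointequivariant} then gives the vanishing of $\Hom(M,N[n])$ for $n\neq 0$, since its collection of generators is tilting.

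For the inductive step, I would fix an open $G$-orbit $j:U\hookrightarrow X$ with closed complement $i:Z\hookrightarrow X$, which has strictly fewer orbits, and apply $\Hom_{\DM_G(X)}(M,-[n])$ to the localisation triangle
\[i_*i^!N\to N\to j_*j^*N\to i_*i^!N[1].\]
The adjunctions $i^*\dashv i_*$ and $j^*\dashv j_*$ identify the outer terms of the resulting long exact sequence with $\Hom_{\DM_G(Z)}(i^*M,i^!N[n])$ and $\Hom_{\DM_G(U)}(j^*M,j^*N[n])$, so it suffices to show both vanish for $n\neq 0$. For the open piece, since $j$ is an open immersion we have $j^*=j^!$, and the orbitwise characterization identifies $j^*M$ and $j^*N$ with pure Tate objects of $\DTM_H(k,\Q)^{w=0}$; the base case applies. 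For the closed piece, every orbit $\iota:\mathcal{O}\hookrightarrow Z$ arises as an orbit $i\circ\iota:\mathcal{O}\hookrightarrow X$, whence $\iota^*i^*M=(i\circ\iota)^*M$ and $\iota^!i^!N=(i\circ\iota)^!N$ are pure Tate; thus the pointwise purity hypotheses transfer to $i^*M$ and $i^!N$ on $Z$, and the induction hypothesis delivers the vanishing.

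The main subtlety is bookkeeping the $*$- and $!$-purity hypotheses so that they feed correctly into both sides of the localisation triangle: the open piece uses the identification $j^*=j^!$ to combine them on $U$, whereas the closed piece treats the $*$-condition on $M$ and the $!$-condition on $N$ independently via composition with $i$. I expect no serious obstacle beyond this careful tracking of which restrictions preserve which form of purity.
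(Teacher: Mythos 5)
Your proof is correct and follows essentially the same route as the paper's: an induction on the number of $G$-orbits using the localisation triangle to isolate an open orbit, with the base case handled by the induction equivalence and the tilting property of $\{\Ind^G_{G^0}(\Q)(n)[2n]\}$ from Proposition~\ref{prop:tiltingpointequivariant}. Your bookkeeping of $*$- versus $!$-purity across the open/closed decomposition matches the paper's, with your invocation of $j^*=j^!$ for the open immersion being the same identification the paper makes implicitly.
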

\begin{proof} The statement can be shown by an induction on the number of orbits. Denote by $j:G/H\cong\mathcal{O}\hookrightarrow X$ and $i:Z=X\backslash \mathcal{O}\to X$ the inclusion of an open orbit $\mathcal{O}$ and its closed complement $Z.$ Then the localisation triangle induces an exact sequence
\begin{center}
\begin{tikzcd}[column sep= 0.2cm]
 \Hom_{\DM_G(Z)}(i^*M,i^!N[n]) \arrow[r] \pgfmatrixnextcell \Hom_{\DM_G(X)}(M,N[n])\arrow[r]\pgfmatrixnextcell
 \Hom_{\DM_G(\mathcal{O})}(j^*M,j^!N[n]).
	\end{tikzcd}
\end{center}
Since $i^*M$ and $i^!N$ are $*$- and $!$-pointwise pure Tate, respectively, the first term of the sequence vanishes by induction. The last term vanishes since by assumption $j^*M$ and $j^!N$ correspond to objects in $\DTM_H(k,\Q)^{w=0}$ via the induction equivalence \eqref{eq:inductionequivalence} and thus have no non-trivial extension by Proposition~\ref{prop:extofpoint}. See \cite[Corollary II.4.19]{soergel_equivariant_2018} for a similar proof.
\end{proof}
Restricted to pointwise mixed Tate objects $\Real_\ell$ is a degrading functor after passing to $\Q_\ell$-coefficients.
\begin{proposition}\label{prop:gradedhompure}  Assume that the $G$-action on $X$ has finitely many orbits. Let $\ell\neq p$ be a prime and $M,N\in \DM_G(X).$  Then the natural isomorphisms $\Real_\ell\circ(1)\to\Real_\ell,$ see \eqref{eq:realforgetsgrading}, induces isomorphisms
$$\bigoplus_{n\in\Z}\Hom_{\DM_G(k,\Q_\ell)}(M,N(n))\stackrel{\sim}{\to} \Hom_{\DerG_G(k,\Q_\ell)}(\Real_\ell(M),\Real_\ell(N))$$ 
 if $M,N$  are $*$- and $!$-pointwise mixed Tate, respectively, and
$$\Hom_{\DM_G(X,\Q_\ell)}(M,N)\stackrel{\sim}{\to} \Hom_{\DerG_G(X,\Q_\ell)}(\Real_\ell(M),\Real_\ell(N))$$
if $M,N$ are $*$- and $!$-pointwise pure Tate, respectively.
\end{proposition}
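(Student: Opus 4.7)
The plan is to argue by induction on the (finite) number of $G$-orbits on $X$, using the localisation triangle to peel off an open orbit, just as in the proof of Proposition~\ref{prop:homvanishingpure}. Concretely, pick an open orbit $j\colon G/H\cong\mathcal{O}\hookrightarrow X$ with closed complement $i\colon Z\hookrightarrow X$. For any $M,N\in\DM_G(X,\Q)$ the localisation triangle $i_!i^!N\to N\to j_*j^*N$ (or dually $j_!j^*M\to M\to i_*i^*M$) produces a long exact sequence of $\Hom$-groups
\begin{equation*}
\cdots\to\Hom_{\DM_G(Z)}(i^*M,i^!N[n])\to\Hom_{\DM_G(X)}(M,N[n])\to\Hom_{\DM_G(\mathcal{O})}(j^*M,j^!N[n])\to\cdots
\end{equation*}
and an entirely analogous sequence on the $\ell$-adic side. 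Since pointwise (mixed/pure) Tateness is preserved under $i^*,i^!,j^*,j^!$ (by definition and the fact that $Z,\mathcal{O}$ inherit finitely many orbits, with fewer than $X$), the inductive hypothesis applies to the outer two terms. The realisation functor $\Real_\ell$ commutes with the six functors, so it maps the motivic long exact sequence to the $\ell$-adic one. A standard five-lemma argument then transports the isomorphism to the middle.

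The base case is a single orbit, which by the induction equivalence \eqref{eq:inductionequivalence} reduces to the point: for $\mathcal{O}\cong G/H$ we have $\Hom_{\DM_G(\mathcal{O})}(j^*M,j^!N[n])\cong\Hom_{\DM_H(k,\Q)}((i,s)^*j^*M,(i,s)^*j^!N[n])$, and by hypothesis both arguments lie in $\DTM_H(k,\Q)$ (respectively in $\DTM_H(k,\Q)^{w=0}$). Proposition~\ref{prop:puretateonpointgrading} then finishes this step, and the compatibility of $\Real_\ell$ with the induction equivalence (itself built from the six functors) guarantees that the isomorphism matches the one obtained from $\Real_\ell\circ(1)\cong\Real_\ell$.

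For the mixed Tate statement, one has to run the above argument after taking the direct sum $\bigoplus_{n\in\Z}\Hom(-,-(n)[\,\cdot\,])$; since $(1)$ commutes with $i^*,i^!,j^*,j^!$ and with shifts, the whole graded long exact sequence is well-defined degree by degree, so the five-lemma still applies in each Tate-degree separately and then one sums. For the pure Tate statement, Proposition~\ref{prop:homvanishingpure} says that the $[n]$-shifted $\Hom$-groups vanish for $n\neq0$, so the long exact sequences collapse and the five-lemma becomes a direct two-out-of-three comparison in a single degree.

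The main obstacle I anticipate is not the inductive bookkeeping but the verification that $\Real_\ell$ genuinely intertwines the six-functor formalisms motivically and $\ell$-adically (so that the localisation triangles really match), and that the isomorphism $\Real_\ell\circ(1)\simeq\Real_\ell$ from \eqref{eq:realforgetsgrading} is compatible with the induction equivalence $(i,s)^*$ used on each orbit; this is essentially a naturality check that relies on the results cited from \cite{soergel_equivariant_2018} and \cite{ayoub_realisation_2014}, together with the point-wise input already handled in Proposition~\ref{prop:puretateonpointgrading}.
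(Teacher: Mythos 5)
Your proposal matches the paper's own proof, which simply says to run the same orbit-by-orbit induction via the localisation triangle used in Proposition~\ref{prop:homvanishingpure} and reduce to the point case handled in Proposition~\ref{prop:puretateonpointgrading}. You have supplied the details (the parallel $\ell$-adic localisation sequence, the five-lemma, the base case via the induction equivalence, and the graded bookkeeping for the mixed Tate statement) that the paper leaves implicit, and correctly flagged the compatibility of $\Real_\ell$ with the six functors and with $(i,s)^*$ as the underlying input from \cite{soergel_equivariant_2018} and \cite{ayoub_realisation_2014}; the argument is sound and follows the same route.
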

\begin{proof}
As in the proof of Proposition~\ref{prop:homvanishingpure} the statement can be reduced to the case of a point where it is the same as Proposition~\ref{prop:puretateonpointgrading}.
\end{proof}
Pointwise pure Tate objects can be obtained from pushforwards along proper maps whose fibers have pure Tate motives.
\begin{proposition}\label{prop:puretatefiberimpliespuretate}
Let $\mu: M\to N$ be a $G$-equivariant proper map. Assume that $M$ is smooth and that the motives of the fibers of $\mu$ are pure Tate, 
$$\Mot(\mu^{-1}(\{x\})\in \DTM(k,\Q)^{w=0}.$$
Then the object $\mu_!(\Q_M)\in \DM_G(N,\Q)$ is pointwise pure Tate.
\end{proposition}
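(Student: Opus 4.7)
\textbf{Proof plan for Proposition \ref{prop:puretatefiberimpliespuretate}.}

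The plan is to check $*$- and $!$-pointwise purity separately, in both cases reducing to the motive (with compact support) of the fiber via base change. Since $\For$ commutes with the six functors, it suffices to work non-equivariantly and prove that $i_x^?\mu_!(\Q_M)\in\DTM(k,\Q)^{w=0}$ for every point $i_x:\poi\to N$ and $?\in\{*,!\}$. Consider the cartesian square
\begin{center}
\begin{tikzcd}
\mu^{-1}(x) \arrow[r,"i_x'"] \arrow[d,"\mu_x"'] & M \arrow[d,"\mu"] \\
\poi \arrow[r,"i_x"'] & N
\end{tikzcd}
\end{center}
and write $f=\mu\circ i_x':\mu^{-1}(x)\to\poi$ for the structure map, which coincides with $\mu_x$.

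For the $*$-case I would apply proper base change to get $i_x^*\mu_!(\Q_M)\cong \mu_{x,!}(i_x'^*\Q_M)=\mu_{x,!}(\Q_{\mu^{-1}(x)})=\Mot^c(\mu^{-1}(x))$. Since $\mu$ is proper the fiber $\mu^{-1}(x)$ is proper, so $\Mot^c(\mu^{-1}(x))\cong\Mot(\mu^{-1}(x))$, which lies in $\DTM(k,\Q)^{w=0}$ by hypothesis.

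For the $!$-case I would combine smoothness of $M$ with base change for $i_x^!$. Working componentwise, assume $M$ is smooth of pure dimension $d$; then relative purity yields $\Q_M\cong g^!\Q(-d)[-2d]$, where $g:M\to\poi$. Using $\mu_!=\mu_*$ (properness of $\mu$) and the base change isomorphism $i_x^!\mu_*\cong\mu_{x,*}\,i_x'^!$, together with $i_x'^!\,g^!=\mu_x^!$, I compute
\begin{align*}
    i_x^!\mu_!(\Q_M)&\cong \mu_{x,*}\,i_x'^!\,g^!\Q(-d)[-2d]\\
    &\cong \mu_{x,*}\mu_x^!\Q(-d)[-2d]= \Mot^c(\mu^{-1}(x))(-d)[-2d].
\end{align*}
Again by properness of $\mu$, this equals $\Mot(\mu^{-1}(x))(-d)[-2d]$, which is pure Tate because the functor $(-d)[-2d]$ preserves $\DTM(k,\Q)^{w=0}$ (it sends $\Q(n)[2n]$ to $\Q(n-d)[2(n-d)]$).

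The main issue to address carefully is the availability of proper base change, relative purity for a smooth morphism, and the base change $i_x^!\mu_*\cong\mu_{x,*}i_x'^!$ in the motivic six-functor formalism of Ayoub and Cisinski--Déglise, as well as compatibility of these isomorphisms with the forgetful functor $\For$ from \cite{soergel_equivariant_2018}; once these formal inputs are cited, the rest of the argument is the short computation above. A minor bookkeeping point is to allow $M$ to have several connected components of different dimensions, in which case the argument is applied component by component.
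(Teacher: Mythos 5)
Your argument is correct and follows essentially the same route as the paper's: $*$-pointwise purity via proper base change reducing to the (compactly supported) motive of the fibre, and $!$-pointwise purity as the Verdier-dual statement (which the paper leaves implicit with ``by Verdier dual arguments,'' and you usefully spell out via relative purity $\Q_M\cong g^!\Q(-d)[-2d]$ and base change for $i_x^!\mu_*$).

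One small bookkeeping mismatch with the paper's conventions: the paper defines $\Mot^c(X)=f_*f^!\Q$, under which $\mu_{x,!}\Q_{\mu^{-1}(x)}=\mu_{x,!}\mu_x^*\Q$ is the Verdier \emph{dual} of $\Mot^c(\mu^{-1}(x))$, not $\Mot^c(\mu^{-1}(x))$ itself; the paper's proof accordingly invokes that Verdier duality preserves pure Tate objects. Your identification $\mu_{x,!}\Q_{\mu^{-1}(x)}=\Mot^c(\mu^{-1}(x))$ uses the other common convention $\Mot^c(X)=f_!\Q_X$. Either way the conclusion is the same (once one records that $D$ preserves $\DTM(k,\Q)^{w=0}$, sending $\Q(n)[2n]\mapsto\Q(-n)[-2n]$), but you should either add that duality step or align with the paper's convention. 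Your $!$-case computation, which lands on $\Mot^c(\mu^{-1}(x))(-d)[-2d]=\mu_{x,*}\mu_x^!\Q(-d)[-2d]$, is in fact consistent with the paper's convention as written.
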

\begin{proof}
We first show that $\mu_!(\Q_M)$ is $*$-pointwise pure Tate. 
Let $i_x:\poi\to N$ be the inclusion of a point $x\in X.$ We have to show that
$$i_x^*\For\mu_!(\Q_M)\in\DTM(k,\Q)^{w=0}.$$
By applying base change with respect to the Cartesian diagram
\begin{center}
\begin{tikzcd}
\mu^{-1}(x) \arrow[r, "l"] \arrow[d, "\mu'"] & M \arrow[d, "\mu"] \\
\{x\} \arrow[r, "i_x"]                       & N                 
\end{tikzcd}
\end{center}
and the fact that $\For$ commutes with the six operations, we have
$$i_x^*\For\mu_!(\Q_M)=\mu'_!l^*\Q_M=\mu'_!\Q_{\mu^{-1}(x)}\in \DM(k,\Q).$$
Now $\mu'_!\Q_{\mu^{-1}(x)}$ is pure Tate since it is Verdier dual to the motive $\Mot^c(\mu^{-1}(\{x\})=\Mot(\mu^{-1}(\{x\})$
and Verdier duality preserves pure Tate motives.

That $\mu_!(\Q_M)=\mu_*(\Q_M)$ is $!$-pointwise pure Tate follows by using Verdier dual arguments.
\end{proof}

\section{Motivic Springer theory}\label{sec:motivicspringertheory}
In this section, we introduce the general setup and definitions of Springer motives and the motivic extension algebra. Moreover, we introduce the local purity and finiteness conditions (PT) and (FO).
We then combine the results of Sections~\ref{sec:weightstructuresandtilting} and~\ref{sec:formalismofequivariantmotivicsheaves} to obtain our main formality results for Springer motives.
\subsection{The setup}\label{sec:generalsetup}
Recall that all varieties are over $k=\overline{\F}_p.$
Let $G$ be a linear algebraic group. Let $\mu_i: \widetilde{\mathcal{N}}_i\to \N$ be a collection of $G$-equivariant proper maps where each $\M_i$ is smooth and connected.
Consider the collections of objects
\begin{align*}
    \Tt^{Spr}=\{\mu_{i,!}(\Q_{\M_i})\in \DM_G(\N)\} \text{ and }
    \widehat{\Tt}^{Spr}=\bigcup_{n\in \Z}\Tt^{Spr}(n)[2n].
\end{align*}
\begin{definition}\label{def:springermotives}
The triangulated category of \emph{Springer motives} $\DM^{Spr}_G(\N,\Q)$ is the full subcategory of $\DM_G(\N)$ generated by $\widehat{\Tt}^{Spr}$
with respect to isomorphism, direct summands and triangles,
$$\DM^{Spr}_G(\N,\Q)=\langle\widehat{\Tt}^{Spr}\rangle_{\cong, \inplus,\Delta}\subset \DM_G(\N).$$
\end{definition}
We define two conditions, that are assumed in some of the later results.
\begin{enumerate}
    \item[(PT)] \textbf{Pure Tate.} For each $x\in \N,$ the motive $\Mot(\mu_i^{-1}(\{x\})$ is pure Tate.
    \item[(FO)] \textbf{Finite Number of Orbits.} The images $\mu_i(\M_i)\subset \N$ have finitely many $G$-orbits.
\end{enumerate}
\begin{remark}
Condition (FO) allows for simple induction arguments. In many settings it can be weakened such that all arguments still work. For instance, there is a quite straightforward adaption to ind-varieties with possibly infinitely many orbits.
\end{remark}
\subsection{Motivic extension algebras}
We define now the motivic and $\ell$-adic extension algebras.
\begin{definition}\label{def:motivicextensionalgebra}
The \emph{motivic extension algebra} $E$ is the $\Z$-graded locally unital algebra $$E=\End^\bullet_{\DM_G(\N,\Q)}(\Tt^{Spr})$$ which has a grading induced by the autoequivalence $\langle 1\rangle=(1)[2],$ see \eqref{eq:gradedend}. More explicitly, for $n\in \Z$ the $n$-th graded part of $E$ is
$$E^n=\bigoplus_{i,j}\Hom_{\DM_G(\N,\Q)}(\mu_{i,!}(\Q_{\M_i}),\mu_{j,!}(\Q_{\M_j})(n)[2n]).$$
For every prime $\ell\neq 0$ we define the $\ell$-adic extension  algebra $E^{\acute{e}t}_\ell$ in the same way, replacing $\DM_G(\N,\Q)$ by $\DerG_G(\N,\Q_\ell).$
\end{definition}
The realisation functor induces a morphism of algebras
$\Real_\ell: E\to E^{\acute{e}t}_\ell$.
The motivic extension algebra can be understood in terms of $G$-equivariant Chow groups of the \emph{Steinberg varieties} $\Stein_{i,j}=\M_i\times_\N\M_j$ equipped with a convolution product. 
\begin{proposition}
Let  $d_j=\dim \M_j.$ There is a natural isomorphism
$$\Hom_{\DM_G(\N,\Q)}(\mu_{i,!}(\Q_{\M_i}),\mu_{j,!}(\Q_{\M_j})(n)[2n])=\Chow_{d_j-n}^G(Z_{i,j})_\Q.$$
\end{proposition}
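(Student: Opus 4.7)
The plan is to reduce the Hom-space to equivariant Chow homology through a sequence of adjunctions and base change, and then invoke Bloch's identification of motivic Borel--Moore cohomology with higher Chow groups. First I would apply the adjunction $(\mu_{i,!},\mu_i^!)$ to obtain
$$\Hom_{\DM_G(\N,\Q)}(\mu_{i,!}\Q_{\M_i},\mu_{j,!}\Q_{\M_j}(n)[2n])\cong\Hom_{\DM_G(\M_i,\Q)}(\Q_{\M_i},\mu_i^!\mu_{j,!}\Q_{\M_j}(n)[2n]).$$

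Next, let $p_i: Z_{i,j}\to\M_i$ and $p_j: Z_{i,j}\to\M_j$ be the two projections. Since $\mu_j$ is proper, $\mu_{j,!}=\mu_{j,*}$, and proper base change for the defining Cartesian square of $Z_{i,j}$ yields the natural identification $\mu_i^!\mu_{j,*}\cong p_{i,*}p_j^!$. Combining this with the adjunction $(p_i^*,p_{i,*})$ and $p_i^*\Q_{\M_i}=\Q_{Z_{i,j}}$, the Hom-space rewrites as $\Hom_{\DM_G(Z_{i,j},\Q)}(\Q_{Z_{i,j}},p_j^!\Q_{\M_j}(n)[2n])$.

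Now I use that $\M_j$ is smooth of dimension $d_j$, so $\omega_{\M_j}\cong\Q_{\M_j}(d_j)[2d_j]$; since $p_j^!\omega_{\M_j}=\omega_{Z_{i,j}}$, we obtain $p_j^!\Q_{\M_j}\cong\omega_{Z_{i,j}}(-d_j)[-2d_j]$. Rearranging the Tate twist and cohomological shift transforms the expression into
$$\Hom_{\DM_G(Z_{i,j},\Q)}(\Q_{Z_{i,j}}(d_j-n)[2(d_j-n)],\omega_{Z_{i,j}}).$$

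The final step is to identify this Hom-space with $\Chow_{d_j-n}^G(Z_{i,j})_\Q$. In the non-equivariant setting this is the standard description of motivic Borel--Moore homology via Bloch's higher Chow groups, namely $\Hom_{\DM(X,\Q)}(\Q_X(p)[2p],\omega_X)\cong\Chow_p(X)_\Q$, which specialises to \eqref{eqn:motiviccomologychowgroups} when $X$ is smooth. The main obstacle is that $Z_{i,j}$ is typically singular, so one cannot invoke \eqref{eqn:equivariantmotiviccomologychowgroups} directly; the required Borel--Moore version has to be lifted to the equivariant setting, which should be achievable by combining the Totaro--Edidin--Graham approximation of $BG$ with the Soergel--Virk--Wendt construction of $\DM_G$ in the same way that \eqref{eqn:equivariantmotiviccomologychowgroups} is deduced from its non-equivariant counterpart.
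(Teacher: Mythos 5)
Your proposal is correct and follows essentially the same route as the paper: adjoin $(\mu_{i,!},\mu_i^!)$, use proper base change to rewrite $\mu_i^!\mu_{j,*}$ via the Steinberg variety, invoke relative purity for the smooth $\M_j$ to produce the twist by $d_j$, and identify the resulting Borel--Moore-type Hom-space with an equivariant Chow group. The only cosmetic difference is that you stop at $\Hom_{\DM_G(Z_{i,j})}(\Q_{Z_{i,j}}(d_j-n)[2(d_j-n)],\omega_{Z_{i,j}})$ whereas the paper adjoins once more to land at $\Hom_{\DM_G(k)}(\Q,\Mot^c(Z)(n-d_j)[2(n-d_j)])$; these are identified by the adjunction $(\fin_Z^*,\fin_{Z,*})$, and the paper then cites a theorem of Kelly--Voevodsky for the final identification, implicitly using the Borel-construction approximation you flag explicitly.
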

\begin{proof} Let $Z=Z_{i,j}$ with projections $\pi_{i,j}:Z\to \M_{i,j}.$ For a variety $X$ denote by $\fin_X:X\to \poi$ the structure map. Then by using various adjunctions, base change and $\fin_{\M_j}^*=\fin_{\M_j}^!(-d_j)[-2d_j]$ since $\M_j$ is smooth, we get
\begin{align*}
    \Hom_{\DM_G(\N,\Q)}(&\mu_{i,!}(\Q_{\M_i}), \mu_{j,!}(\Q_{\M_j})(n)[2n])\\
    &\cong\Hom_{\DM_G(\M_i,\Q)}(\Q_{\M_i},\mu_i^!\mu_{j,*}(\Q_{\M_j})(n)[2n])\\
    &\cong\Hom_{\DM_G(\M_i,\Q)}(\fin_{\M_i}^*\Q,\pi_{i,*}\pi_{j}^!\fin_{\M_j}^*\Q(n)[2n])\\
    &\cong\Hom_{\DM_G(k,\Q)}(\Q,\fin_{\M_i,*}\pi_{i,*}\pi_{j}^!\fin_{\M_j}^!\Q(n-d_j)[2(n-d_j)])\\
    &\cong\Hom_{\DM_G(k,\Q)}(\Q,\fin_{Z,*}\fin_{Z}^!\Q(n-d_j)[2(n-d_j)])\\
    &\cong\Hom_{\DM_G(k,\Q)}(\Q,\Mot^c(Z)(n-d_j)[2(n-d_j)]).
\end{align*}
Now the last term is isomorphic to $\Chow^G_{d_j-n}(Z)_\Q$ by \cite[Theorem 5.3.14]{kelly_voevodsky_2017}.
\end{proof}
\begin{remark}
Since $Z_{i,j}$ is not necessarily equidimensional we need to work with Chow groups indexed by the \emph{dimension} of cycles here.
\end{remark}
The \emph{convolution} of two cycles
$\alpha\in \Chow^G_{d_j-n}(Z_{i,j})_\Q$ and $\beta\in \Chow^G_{d_k-m}(Z_{j,k})_\Q$ is given by the formula
\begin{align}
    \alpha\star\beta=p_*\delta^!(\alpha\times\beta)\in \Chow^G_{d_k-n-m}(Z_{i,k})_\Q \label{eq:convolution}
\end{align}
where $\alpha\times\beta$ is the exterior product and 
\begin{align*}
    \delta&: \M_i\times_\N\M_j\times_\N\M_k\to \M_i\times_\N\M_j\times\M_j\times_\N\M_k=Z_{i,j}\times Z_{j,k}\text{ and}\\
    p&: \M_i\times_\N\M_j\times_\N\M_k\to \M_i\times_\N\M_k = Z_{i,k}
\end{align*}
are the diagonal and projection maps.

Convolution of cycles and composition of morphisms are compatible in the obvious way. In the non-equivariant case this is proven and discussed in detail in \cite{fangzhou_borelmoore_2016}. The proof can be adapted to the equivariant case by using that both equivariant motivic sheaves and equivariant Chow groups are defined in terms of their non-equivariant versions of approximations of the Borel construction. We will not present the details here.
\begin{corollary}\label{cor:extensionalgebraaschowgroups} There is an isomorphism of graded algebras
\begin{align}\label{eq:extensionalgebraaschowgroups}
    E^\bullet\cong\bigoplus_{i,j}\Chow_{d_j-\bullet}^G(Z_{i,j})_\Q
\end{align}
\end{corollary}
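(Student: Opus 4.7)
The proof of the graded vector space isomorphism has already been established by the preceding proposition: taking the direct sum over $i,j$ of the natural isomorphisms
\[
\Hom_{\DM_G(\N,\Q)}(\mu_{i,!}\Q_{\M_i},\mu_{j,!}\Q_{\M_j}(n)[2n])\cong \Chow^G_{d_j-n}(Z_{i,j})_\Q
\]
yields the isomorphism \eqref{eq:extensionalgebraaschowgroups} at the level of graded $\Q$-vector spaces. The substance of the corollary is therefore the claim that composition of morphisms on the left-hand side is intertwined with the convolution product \eqref{eq:convolution} on the right-hand side.

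My plan is to reduce this compatibility statement to a purely geometric identity in the six-functor formalism and then invoke the non-equivariant case treated in \cite{fangzhou_borelmoore_2016}. First, I would unwind what composition of two morphisms looks like under the chain of adjunctions used in the preceding proof. Given
\[
\alpha\colon \mu_{i,!}\Q_{\M_i}\to \mu_{j,!}\Q_{\M_j}(n)[2n],\qquad \beta\colon \mu_{j,!}\Q_{\M_j}\to \mu_{k,!}\Q_{\M_k}(m)[2m],
\]
their transforms $\widetilde\alpha\in\Hom(\Q,\Mot^c(Z_{i,j})(n-d_j)[2(n-d_j)])$ and $\widetilde\beta\in\Hom(\Q,\Mot^c(Z_{j,k})(m-d_k)[2(m-d_k)])$, viewed as cycles via Kelly's isomorphism \cite[Thm.~5.3.14]{kelly_voevodsky_2017}, should combine under $\beta\circ\alpha$ to the transform of an element of $\Chow^G_{d_k-n-m}(Z_{i,k})_\Q$.

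The key calculation is to identify this element with $p_*\delta^!(\widetilde\alpha\times\widetilde\beta)$. This breaks into three compatibilities: (i) the exterior tensor product on motivic sheaves matches the exterior product of Chow cycles under Kelly's identification; (ii) the composition $\beta\circ\alpha$ corresponds, after the adjunction juggling, to pushing the product $\widetilde\alpha\otimes\widetilde\beta$ along the structure map of $Z_{i,j}\times Z_{j,k}$, then passing to the fiber product $Z_{i,j}\times_{\M_j}Z_{j,k} = \M_i\times_\N\M_j\times_\N\M_k$ via the Gysin pullback $\delta^!$ along the regular closed embedding coming from the diagonal of $\M_j$; (iii) the projection $p$ to $Z_{i,k}$ corresponds to the pushforward arising from the unit/counit of the $(\mu_{i,!},\mu_i^!)$ adjunction. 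Each of these is a standard manipulation of base change and projection formulas in $\DM_G$; degree and shift bookkeeping, using $\Q_{\M_j}=\fin_{\M_j}^*\Q = \fin_{\M_j}^!\Q(-d_j)[-2d_j]$, produces the correct index $d_k-n-m$ on the convolved cycle.

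The main obstacle is the equivariance: the result of \cite{fangzhou_borelmoore_2016} is formulated non-equivariantly, so to conclude I would observe that both sides of the desired identity arise from their non-equivariant counterparts via the Borel-type approximation of $BG$, as sketched in Section~\ref{sec:equivariantmotives}. Since each step in both (a) Kelly's identification with Chow groups and (b) the realisation of $\delta^!$ and $p_*$ as Gysin and proper pushforward is compatible with pulling back to the approximating smooth $G$-torsors, the equivariant compatibility follows from the non-equivariant one level by level in the approximation and then passes to the limit. Granting this reduction, the corollary follows immediately from the vector space isomorphism of the preceding proposition.
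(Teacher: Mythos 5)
Your proposal is correct and mirrors the paper's own reasoning: the paper also derives the graded vector space isomorphism from the preceding proposition, notes that compatibility of composition with the convolution product \eqref{eq:convolution} is established non-equivariantly in \cite{fangzhou_borelmoore_2016}, and then appeals to the Borel-construction approximation to transfer the statement to the equivariant setting. Your additional unwinding of the three compatibilities (exterior product, refined Gysin along the diagonal of $\M_j$, proper pushforward along $p$) is just a more explicit account of what the cited reference carries out, so there is no substantive difference from the paper's argument.
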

Similarly, the $\ell$-adic extension algebra can be described in terms of $\ell$-adic Borel--Moore homology, see \cite[Section 8.6]{chriss_representation_2010}.
\begin{proposition}\label{prop:elladicextensionalgebraasborelmoore} There is an isomorphism of graded algebras
\begin{align}\label{eq:extensionalgebraborelmoore}
    (E^{\acute{e}t}_\ell)^\bullet\cong\bigoplus_{i,j}H^{BM,G}_{2(d_j-\bullet)}(Z_{i,j},\Q_\ell(d_j-\bullet)).
\end{align}
\end{proposition}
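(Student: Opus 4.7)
The plan is to mirror, step for step, the proof of Corollary~\ref{cor:extensionalgebraaschowgroups}, but replacing $\DM_G(-,\Q)$ with $\DerG_G(-,\Q_\ell)$ throughout. The $\ell$-adic equivariant derived category of Bernstein--Lunts comes equipped with the full six-functor formalism, Tate twist, and the identities $\fin_X^* = \fin_X^!(-d)[-2d]$ for $X$ smooth of dimension $d$, so the entire chain of manipulations performed in the motivic setting carries over verbatim.

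Concretely, let $Z = Z_{i,j} = \M_i \times_\N \M_j$ with projections $\pi_i, \pi_j$ to the two factors and structure map $\fin_Z : Z \to \poi$. Writing out the graded piece and applying the adjunctions $(\mu_{i,!}, \mu_i^!)$ and $(\pi_{i,*}, \pi_i^*)$ together with proper base change for the Cartesian square defining $Z$, I would obtain
\begin{align*}
(E^{\acute{e}t}_\ell)^n &= \bigoplus_{i,j} \Hom_{\DerG_G(\N,\Q_\ell)}\bigl(\mu_{i,!}\Q_{\ell,\M_i},\, \mu_{j,!}\Q_{\ell,\M_j}(n)[2n]\bigr) \\
&\cong \bigoplus_{i,j} \Hom_{\DerG_G(\M_i,\Q_\ell)}\bigl(\fin_{\M_i}^*\Q_\ell,\, \pi_{i,*}\pi_j^!\fin_{\M_j}^*\Q_\ell(n)[2n]\bigr).
\end{align*}
Using the smoothness of $\M_j$ to rewrite $\fin_{\M_j}^* = \fin_{\M_j}^!(-d_j)[-2d_j]$ and then pushing forward to the point with $\fin_{\M_i,*} \pi_{i,*} = \fin_{Z,*}$, the expression becomes
$$\Hom_{\DerG_G(k,\Q_\ell)}\bigl(\Q_\ell,\, \fin_{Z,*}\fin_Z^! \Q_\ell(n - d_j)[2(n - d_j)]\bigr).$$

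The next step identifies the right-hand side with equivariant $\ell$-adic Borel--Moore homology. By definition, $H^{BM,G}_m(Z, \Q_\ell(r)) = \Hom_{\DerG_G(k,\Q_\ell)}(\Q_\ell, \fin_{Z,*}\fin_Z^!\Q_\ell(-r)[-m])$ (this is the standard $\ell$-adic analogue of the motivic identification of $\fin_{Z,*}\fin_Z^!\Q$ with the motive with compact support, as used at the end of the previous proof). Substituting $m = 2(d_j - n)$ and $r = d_j - n$ yields the claimed isomorphism of graded vector spaces \eqref{eq:extensionalgebraborelmoore}.

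Finally I need to promote this to an isomorphism of graded \emph{algebras}. The algebra structure on the right is convolution of equivariant Borel--Moore cycle classes defined by the same formula $\alpha \star \beta = p_* \delta^!(\alpha \times \beta)$ as in \eqref{eq:convolution}, and the compatibility of this convolution with composition of morphisms in $\DerG_G(\N,\Q_\ell)$ is the $\ell$-adic version of the statement recalled from \cite{fangzhou_borelmoore_2016}; it is the content of \cite[Section~8.6]{chriss_representation_2010} in the non-equivariant case and transfers to the equivariant setting via the Borel construction exactly as described in the preceding paragraph. The main (and essentially only) obstacle is therefore this last bookkeeping point: checking that the adjunctions and base-change isomorphisms used to produce the vector-space identification are compatible with the convolution product, but this is formal and parallels the motivic case without additional input.
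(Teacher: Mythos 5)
Your proof is correct and follows exactly the route the paper implicitly indicates: mirror the adjunction chain from the motivic case with $\DerG_G(-,\Q_\ell)$ in place of $\DM_G(-,\Q)$, identify $\fin_{Z,*}\fin_Z^!\Q_\ell$ in the appropriate degree with equivariant $\ell$-adic Borel--Moore homology, and invoke \cite[Section 8.6]{chriss_representation_2010} plus the Borel construction for the convolution--composition compatibility. The paper itself gives no separate proof beyond the remark ``Similarly, \dots, see \cite[Section 8.6]{chriss_representation_2010},'' so your write-up simply spells out the same argument in more detail.
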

\begin{remark}
The above discussion should be a shadow of the following \textbf{conjectural} general theory.  There should be a Chow weight structure on the category $\DM_G^{gm}(\N)$ similarly to the non-equivariant case, see Example~\ref{exp:exampleweightstructures}(3). The heart of this Chow weight structure should be equivalent to a category of \emph{equivariant relative Chow motives} $\operatorname{Chow}_G(\N,\Q)$ in which the composition of morphisms is defined via convolution as in \eqref{eq:convolution}. Since by assumption $\M_i$ is smooth and $\mu_i$ is projective the motive $\mu_!(\Q_{M_i})$ should be in the heart and correspond to the relative \emph{Borel--Moore} motive $M^{BM}(\M_i/\N)$ in the category $\operatorname{Chow}_G(N,\Q).$

In the non-equivariant case this is shown to be true by Fangzhou \cite{fangzhou_borelmoore_2016}. To define a weight structure in the equivariant case, one would need appropriate $G$-equivariant resolution of singularities or alterations, see \cite[Remark II.4.15]{soergel_equivariant_2018}.

In this article we get around this problem by defining a weight structure on the subcategory $\DM^{Spr}_G(\N,\Q)$ by brute force using the conditions (PT) and (FO). 
\end{remark}
\subsection{Main results}
We will now combine the formalism of weight structures and weight complex functors from Section~\ref{sec:weightstructuresandtilting} with our results on pointwise pure Tate motives from Section~\ref{sec:pointwisetate} to obtain the following main result.
\begin{theorem}\label{thm:main}
Assume that (PT) and (FO) hold. Then there is an equivalence of categories
\begin{center}
\begin{tikzcd}
\DM^{Spr}_G(\N,\Q) \arrow[r, "\sim"] & {\DperfZ(E)}.
\end{tikzcd}
\end{center}
Moreover, for all primes $\ell\neq p$ the $\ell$-adic realisation functor $\Real_\ell$ gives an isomorphism $E\otimes_\Q\Q_\ell\cong E^{\acute{e}t}_\ell$ and acts as a degrading functor with respect to the Tate-twist $(1)$ in the sense of \cite{beilinson_koszul_1996}
\begin{center}
\begin{tikzcd}
\DM^{Spr}_G(\N,\Q_\ell) \arrow[r, "\Real_\ell"]&
{\D^{Spr}_G(\N,\Q_\ell).}                                   
\end{tikzcd}
\end{center}
\end{theorem}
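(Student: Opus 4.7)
The plan is to apply Corollary~\ref{cor:triangulatedcatequalsmodulecat} with $\Cc = \DM_G(\N,\Q)$, $\Tt = \Tt^{Spr}$, and autoequivalence $\langle 1 \rangle = (1)[2]$. The category $\DM_G(\N,\Q)$ carries a stable $\infty$-categorical enhancement by Richarz--Scholbach, and by definition $\DM^{Spr}_G(\N,\Q) = \langle \widehat{\Tt}^{Spr}\rangle_{\cong,\inplus,\Delta}$, so only the tilting property of $\widehat{\Tt}^{Spr}$ and the identification of the graded endomorphism algebra with $E$ must be checked; the latter is just Definition~\ref{def:motivicextensionalgebra}.

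For tilting, I would combine (PT) and (FO) as follows. Condition (PT) together with Proposition~\ref{prop:puretatefiberimpliespuretate} (applied to each proper $G$-equivariant map $\mu_i$ with smooth source $\M_i$) shows that every generator $\mu_{i,!}(\Q_{\M_i})$ is pointwise pure Tate on $\N$. Since the Tate twist $(n)[2n]$ preserves pointwise pure Tateness, every object of $\widehat{\Tt}^{Spr}$ is pointwise pure Tate. Condition (FO) says that $\bigcup_i \mu_i(\M_i) \subset \N$ has only finitely many $G$-orbits, and since all objects in $\widehat{\Tt}^{Spr}$ are supported on this $G$-stable closed subvariety, their Hom groups in $\DM_G(\N,\Q)$ agree with those computed after restriction along the closed immersion. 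Proposition~\ref{prop:homvanishingpure} then yields $\Hom(M, N[n]) = 0$ for all $n \neq 0$ and $M, N \in \widehat{\Tt}^{Spr}$, which is exactly the tilting condition. Corollary~\ref{cor:triangulatedcatequalsmodulecat} now delivers the equivalence $\DM^{Spr}_G(\N,\Q) \xrightarrow{\sim} \DperfZ(E)$.

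For the $\ell$-adic half, I would apply Proposition~\ref{prop:gradedhompure}: for any pair $M, N \in \Tt^{Spr}$, both pointwise pure Tate by the previous step, the natural transformation $\Real_\ell \circ (1) \to \Real_\ell$ induces an isomorphism $\bigoplus_n \Hom_{\DM_G(\N,\Q_\ell)}(M, N(n)) \xrightarrow{\sim} \Hom_{\D_G(\N,\Q_\ell)}(\Real_\ell M, \Real_\ell N)$. Taking the direct sum over all pairs in $\Tt^{Spr}$ and comparing with Proposition~\ref{prop:elladicextensionalgebraasborelmoore} produces the algebra isomorphism $E \otimes_\Q \Q_\ell \cong E^{\acute{e}t}_\ell$, and the same proposition read term-by-term in the Tate degree is precisely the statement that $\Real_\ell$ is a degrading functor with respect to $(1)$ in the sense of \cite[Section 4.3]{beilinson_koszul_1996}. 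Since $\Real_\ell$ is triangulated and sends the generators of $\DM^{Spr}_G(\N,\Q_\ell)$ to those of $\D^{Spr}_G(\N,\Q_\ell)$, it restricts to the displayed functor.

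The main obstacle is the clean application of Proposition~\ref{prop:homvanishingpure}: hypothesis (FO) gives finitely many orbits only on the closed subvariety $\bigcup_i \mu_i(\M_i)$, not on all of $\N$. One has to observe that every Hom between objects of $\widehat{\Tt}^{Spr}$ factors through this closed subvariety, so that the proposition applies and the orbitwise induction used to prove it goes through. Everything else is a direct assembly of Corollary~\ref{cor:triangulatedcatequalsmodulecat} together with Propositions~\ref{prop:puretatefiberimpliespuretate}, \ref{prop:homvanishingpure}, and \ref{prop:gradedhompure}.
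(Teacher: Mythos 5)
Your strategy is essentially the one the paper uses: derive the tilting property of $\widehat{\Tt}^{Spr}$ from (PT) via Proposition~\ref{prop:puretatefiberimpliespuretate} and from (FO) via Proposition~\ref{prop:homvanishingpure}, feed this into Corollary~\ref{cor:triangulatedcatequalsmodulecat}, and handle the realisation statements with Proposition~\ref{prop:gradedhompure} and Proposition~\ref{prop:elladicextensionalgebraasborelmoore}.

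One point needs tightening. You restrict along the closed immersion of $\bigcup_i\mu_i(\M_i)\subset\N$, but the setup allows an arbitrary (possibly infinite) collection of maps, in which case this union need not be closed, nor even a variety. The fix — and what the paper actually does in Proposition~\ref{prop:springerobjectstilting} — is to argue one pair of generators at a time: fix $M=\mu_{i,!}(\Q_{\M_i})(k)[2k]$ and $N=\mu_{j,!}(\Q_{\M_j})(l)[2l]$, restrict to $\N_{i,j}=\mu_i(\M_i)\cup\mu_j(\M_j)$, which is a finite union of closed sets (each $\mu_i$ is proper) and hence a genuine closed $G$-subvariety with finitely many orbits by (FO). Since the tilting condition is a statement about pairs, this pairwise reduction is all that is needed, and the adjunction $\Hom_{\DM_G(\N)}(M,N[n])=\Hom_{\DM_G(\N_{i,j})}(k_{i,j}^*M,k_{i,j}^!N[n])$ (note the $*/!$ asymmetry, matching the hypotheses of Proposition~\ref{prop:homvanishingpure}) carries the argument through. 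The $\ell$-adic half should likewise be applied not merely to generators in $\Tt^{Spr}$ but to arbitrary objects of $\DM^{Spr}_G(\N)$; the paper records separately that any such object is supported on a finite union of the $\mu_i(\M_i)$ and is pointwise mixed Tate, so that Proposition~\ref{prop:gradedhompure} applies. With these adjustments your argument matches the paper's.
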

The following is the most important ingredient in order to prove Theorem~\ref{thm:main}.
\begin{proposition}\label{prop:springerobjectstilting} Assume that the conditions (PT) and (FO) are fulfilled. Then the collection of objects $\widehat{\Tt}^{Spr}$ in $\DM_G(\N)$ is tilting.
\end{proposition}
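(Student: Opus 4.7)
The plan is to reduce the tilting statement to the extension vanishing already established in Proposition~\ref{prop:homvanishingpure}. Concretely, to show that $\widehat{\Tt}^{Spr}$ is tilting it suffices to verify that every object in this collection is pointwise pure Tate on a $G$-stable subvariety of $\N$ with finitely many orbits; then Proposition~\ref{prop:homvanishingpure} immediately gives the required $\Hom$-vanishing.

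First, I would check that each generator $\mu_{i,!}(\Q_{\M_i})$ is pointwise pure Tate. This is exactly Proposition~\ref{prop:puretatefiberimpliespuretate}: the map $\mu_i$ is $G$-equivariant and proper, the source $\M_i$ is smooth, and condition (PT) is precisely the pure Tate hypothesis on the motives of the fibers. Next, observe that Tate twist $(m)$ and the cohomological shift $[2m]$ preserve the category $\DTM(k,\Q)^{w=0}$, since by construction this category is closed under the autoequivalence $\langle 1\rangle = (1)[2]$ (and in particular under $(m)[2m]$). Since the forgetful functor $\For$ commutes with the six functors and with Tate twists, and since the orbitwise/pointwise criterion is stable under $(m)[2m]$, every object of $\widehat{\Tt}^{Spr}$ is pointwise pure Tate.

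Second, I would apply Proposition~\ref{prop:homvanishingpure}. Because each $\mu_i$ is proper, the image $\mu_i(\M_i)\subset \N$ is closed, and the finite union $X=\bigcup_i \mu_i(\M_i)$ is a closed $G$-stable subvariety of $\N$ containing the supports of all objects of $\widehat{\Tt}^{Spr}$. By condition (FO), $X$ carries only finitely many $G$-orbits. For a closed embedding $\iota:X\hookrightarrow \N$, the functor $\iota_!=\iota_*$ is fully faithful, so the $\Hom$-groups in $\DM_G(\N)$ between any two objects of $\widehat{\Tt}^{Spr}$ agree with those computed in $\DM_G(X)$, where Proposition~\ref{prop:homvanishingpure} applies and yields $\Hom(M,N[n])=0$ for all $n\neq 0$.

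The only genuine obstacle is the passage from the finite-orbit hypothesis on $X$ to the ambient $\N$; this is what forces condition (FO) into the statement. Once one agrees to work on $X$ instead of $\N$, everything reduces to Proposition~\ref{prop:puretatefiberimpliespuretate} and Proposition~\ref{prop:homvanishingpure}; no further pure-Tate or weight computations are needed.
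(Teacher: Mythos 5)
Your proof is correct and follows essentially the same approach as the paper: both reduce to Proposition~\ref{prop:homvanishingpure} by observing that condition (PT) together with Proposition~\ref{prop:puretatefiberimpliespuretate} makes the generators pointwise pure Tate, and then restricting to a closed $G$-stable subvariety with finitely many orbits (the paper uses the pairwise union $\N_{i,j}=\mu_i(\M_i)\cup\mu_j(\M_j)$ and the adjunction identity $\Hom_{\DM_G(\N)}(M,N[n])=\Hom_{\DM_G(\N_{i,j})}(k_{i,j}^*M,k_{i,j}^!N[n])$, while you use the total union $X$ and full faithfulness of $\iota_*$; since the relevant objects are supported on these closed subvarieties the two reductions agree, as $k^*$ and $k^!$ coincide there).
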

\begin{proof}
The condition (PT) implies that all objects in $\widehat{\Tt}^{Spr}$ are pointwise pure Tate by Proposition~\ref{prop:puretatefiberimpliespuretate}. 
Now, let $M=\mu_{i,!}(\Q_{\M_i})(k)[2k]$ and $N=\mu_{j,!}(\Q_{\M_j})(l)[2l].$
The subvariety $k_{i,j}: \N_{i,j}=\mu_i(\M_i)\cup\mu_j(\M_j)\hookrightarrow \N$ is closed since $\mu_i$ and $\mu_j$ are proper. Since $M$ and $N$ are supported on $\N_{i,j}$ there is an equality
\begin{equation}\label{eq:proofofspringerobjectstilting}
    \Hom_{\DM_G(N,\Q)}(M,N[n])=\Hom_{\DM_G(\N_{i,j},\Q)}(k_{i,j}^*M,k_{i,j}^!N[n]).
\end{equation}
The condition (FO) ensures that there are only finitely many $G$-orbits in $\N_{i,j}.$ Moreover $k_{i,j}^*M$ and $k_{i,j}^!N$ are $*$- and $!$-pointwise pure Tate. Hence by Proposition~\ref{prop:homvanishingpure} the right hand side of \eqref{eq:proofofspringerobjectstilting} vanishes for $n\neq 0$ and the statement follows.
\end{proof}
By combining Propositions~\ref{prop:springerobjectstilting} and~\ref{prop:weightcomplexfunctor} we can define a weight structure $w$ on 
$\DM^{Spr}_G(\N,\Q)$ with heart
$\langle\widehat{\Tt}^{Spr}\rangle_{\cong, \inplus,\oplus}.$ Now, the first statement of Theorem~\ref{thm:main} follows from Corollary~\ref{cor:triangulatedcatequalsmodulecat}. The remaining statements are implied by the following:
\begin{proposition} Assume that the conditions (PT) and (FO) are fulfilled.  Let $M,N\in \DTM^{Spr}_G(\N)$ and $\ell\neq p$ be a prime. The natural isomorphisms $\Real_\ell\circ(1)\to\Real_\ell$ from \eqref{eq:realisationfunctor} induce  isomorphisms
$$\bigoplus_{n\in\Z}\Hom_{\DM_G(N,\Q_\ell)}(M,N(n))\stackrel{\sim}{\to} \Hom_{\DerG_G(\N,\Q_\ell)}(\Real_\ell(M),\Real_\ell(N))$$ and for $M,N\in\DTM^{Spr}_G(\N)^{w=0}$ isomorphisms $$\Hom_{\DM_G(N,\Q_\ell)}(M,N)\stackrel{\sim}{\to}\Hom_{\DerG_G(\N,\Q_\ell)}(\Real_\ell(M),\Real_\ell(N)).$$
\end{proposition}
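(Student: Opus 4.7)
The plan is to reduce the two isomorphisms of the proposition to the already established comparison result, Proposition~\ref{prop:gradedhompure}, via a standard support argument. The key preliminary observation is that every object of $\DM^{Spr}_G(\N)$ is pointwise mixed Tate, and every object of the heart $\DM^{Spr}_G(\N)^{w=0}$ is pointwise pure Tate. Indeed, each generator $\mu_{i,!}(\Q_{\M_i})$ is pointwise pure Tate by hypothesis (PT) together with Proposition~\ref{prop:puretatefiberimpliespuretate}. Since the pointwise pure Tate property is stable under Tate twists, finite direct sums and direct summands, it propagates from $\widehat{\Tt}^{Spr}$ to the whole of $\DM^{Spr}_G(\N)^{w=0}$. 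The pointwise mixed Tate property is additionally stable under cones, because $\DTM(k,\Q)$ is a triangulated subcategory of $\DM(k,\Q)$; hence it propagates from $\widehat{\Tt}^{Spr}$ to the whole of $\DM^{Spr}_G(\N)=\langle\widehat{\Tt}^{Spr}\rangle_{\cong,\inplus,\Delta}$.

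To apply Proposition~\ref{prop:gradedhompure} directly, however, one needs the ambient variety to carry only finitely many $G$-orbits, which is not globally assumed on $\N$. The second step is therefore to shrink to a sufficient closed subvariety. Any specific $M,N\in\DM^{Spr}_G(\N)$ is built from finitely many of the $\mu_{i,!}(\Q_{\M_i})$, hence supported on a finite union $\N'=\bigcup_{i\in I}\mu_i(\M_i)$, which is closed in $\N$ because each $\mu_i$ is proper. By condition (FO), this $\N'$ has only finitely many $G$-orbits. Denoting the closed embedding by $k:\N'\hookrightarrow\N$, adjunction together with the fact that $M$ and $N$ are supported on $\N'$ yields
$$\Hom_{\DM_G(\N,\Q_\ell)}(M,N(n))\cong\Hom_{\DM_G(\N',\Q_\ell)}(k^*M,k^!N(n)),$$
exactly as in the proof of Proposition~\ref{prop:springerobjectstilting}, and analogously on the $\ell$-adic side, compatibly with $\Real_\ell$ since $\Real_\ell$ commutes with $k^*$ and $k^!$. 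Because the inclusion of any point $i_x:\poi\to\N'$ composes with $k$ to the corresponding inclusion into $\N$, we see that $k^*M$ is $*$-pointwise mixed Tate and $k^!N$ is $!$-pointwise mixed Tate; when $M,N$ lie in the heart these restrictions are $*$- respectively $!$-pointwise pure Tate.

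The hypotheses of Proposition~\ref{prop:gradedhompure} are now met on $\N'$ applied to $k^*M$ and $k^!N$, and its two conclusions specialise exactly to the two isomorphisms of the present proposition. The only genuinely substantive step is the support reduction in the second paragraph, which must be set up carefully so that it intertwines with $\Real_\ell$; after that the statement is a direct consequence of Proposition~\ref{prop:gradedhompure}. No further obstacle arises because the commutation of $\Real_\ell$ with $k^*$ and $k^!$ is part of the six-functor formalism recalled in Section~\ref{sec:equivariantmotives}.
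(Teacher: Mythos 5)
Your proof is correct and follows essentially the same route as the paper's: establish that Springer motives are pointwise mixed Tate (pure Tate on the heart) via (PT) and Proposition~\ref{prop:puretatefiberimpliespuretate}, reduce the support to a closed subvariety with finitely many orbits using (FO) and properness of the $\mu_i$, and then invoke Proposition~\ref{prop:gradedhompure}. The paper's proof leaves the support-reduction step more implicit, while you spell out the closed embedding $k:\N'\hookrightarrow\N$ and the adjunction intertwining with $\Real_\ell$; this added detail is a reasonable expansion rather than a different approach.
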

\begin{proof}
Let $M,N\in\DTM^{Spr}_G(\N).$ All objects in $\widehat{\Tt}^{Spr}$ are supported on a closed subset of $\N$ consisting of finitely many $G$-orbits by condition (FO) and are pointwise pure Tate using condition (PT) and Proposition~\ref{prop:puretatefiberimpliespuretate}. Now $M$ and $N$ are constructed from the objects in $\widehat{\Tt}^{Spr}$ by a finite combination of taking direct summands, finite direct sums and triangles. Hence $M$ and $N$ are also supported on a closed subset of $\N$ consisting finitely many $G$-orbits and are pointwise mixed Tate. Now the statement follows from Proposition~\ref{prop:gradedhompure}. 
\end{proof}
\section{Springer resolution and affine Hecke algebras} \label{sec:heckealgebra}
We now apply Theorem~\ref{thm:main} to the special case of the \emph{Springer resolution}. 
Let $G$ be a reductive algebraic group over $\overline{\F}_p.$ Denote by $\Nnil\subset\operatorname{Lie}(G)$ the nilpotent cone, by $\mu:\M\to\Nnil$ the Springer resolution and by $Z=\M\times_{\Nnil}\M$ the Steinberg variety. There is an additional dilation action of $\mathbb{G}_m$ on $\M$ and $\Nnil,$ and we will consider the group $A=G\times \mathbb{G}_m$ or $A=G.$

Moreover, assume that 
\begin{enumerate}
    \item $p$ is a good prime for every classical group appearing as a constituent in $G$ and
    \item $p > 3(h + 1)$, where $h$ denotes the maximum of all Coxeter numbers of exceptional constituents in $G.$
\end{enumerate}

\begin{lemma}
The conditions (PT) and (FO) hold in this setup.
\end{lemma}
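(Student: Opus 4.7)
The plan is to verify (FO) and (PT) separately: (FO) reduces to the classical finiteness of nilpotent orbits, while (PT) reduces, after recalling that Springer fibers are proper, to the existence of affine pavings on Springer fibers under the stated hypotheses on $p$.

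For (FO), I would observe that the Springer resolution $\mu:\M\to\Nnil$ is surjective, so its image equals $\Nnil$. Under the stated assumptions on $p$, the Bala--Carter classification of nilpotent orbits carries over from characteristic zero; in particular $\Nnil$ has only finitely many $G$-orbits (a classical theorem of Dynkin--Kostant in characteristic zero, extended to good characteristic under the hypotheses on $p$). In the case $A=G\times\Gm$, the additional $\Gm$-action permutes the finitely many $G$-orbits (and in fact fixes each of them setwise, via the cocharacter coming from a Jacobson--Morozov $\mathfrak{sl}_2$-triple), so the count of $A$-orbits is still finite.

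For (PT), each fiber $\mu^{-1}(x)$ is the Springer fiber attached to $x$, a closed and hence projective subvariety of the flag variety of $G$. By properness one has $\Mot(\mu^{-1}(x)) = \Mot^c(\mu^{-1}(x))$, so it suffices to show that $\Mot^c(\mu^{-1}(x))$ is pure Tate. I would then invoke the classical theorem that under our assumptions on $p$ every Springer fiber admits an affine paving: for classical types in good characteristic this is due to Spaltenstein and De Concini--Lusztig--Procesi, while for exceptional types the bound $p>3(h+1)$ guarantees the existence of an $\mathfrak{sl}_2$-triple through $x$ via a Jacobson--Morozov-type theorem, and hence of a cocharacter producing a Bialynicki--Birula decomposition that descends to an affine paving of $\mu^{-1}(x)$. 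Applying Proposition~\ref{prop:affinepavingpuretate} then yields $\Mot^c(\mu^{-1}(x))\in\DTM(k,\Q)^{w=0}$, which is (PT).

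The only substantive step is the existence of an affine paving of Springer fibers in positive characteristic; the numerical assumptions on $p$ are precisely what is needed to transport the standard characteristic-zero constructions (good primes, $\mathfrak{sl}_2$-triples, Kostant--Springer sections) to this setting, so the hard work has already been done in the literature and the lemma amounts to collecting the appropriate references.
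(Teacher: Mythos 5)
Your overall structure—(FO) from finiteness of nilpotent orbits, (PT) from affine pavings of Springer fibers plus Proposition~\ref{prop:affinepavingpuretate}—is a reasonable way to proceed, and the (FO) part is unproblematic (the paper simply cites \cite{carter_finite_1993} for finiteness of nilpotent orbits). The paper's (PT) argument, however, is a direct citation of \cite[Theorem~1.1]{eberhardt_springer_2021}, which already asserts that the motive of a Springer fiber is pure Tate under the stated hypotheses on $p$; it does not re-derive this via affine pavings in the proof of this lemma.

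Your sketch of (PT) contains a step that does not work as stated. You claim that for exceptional types, the $\Gm$-action coming from a Jacobson--Morozov cocharacter produces "a Bialynicki--Birula decomposition that descends to an affine paving of $\mu^{-1}(x)$." But $\mu^{-1}(x)$ is a closed $\Gm$-stable subvariety of the flag variety which is in general singular, and a BB decomposition of the ambient smooth variety does not automatically restrict to an affine paving of such a subvariety—the intersections of BB cells with the fiber need not be affine spaces. The known proofs of affine pavings for Springer fibers (Spaltenstein in type $A$, De Concini--Lusztig--Procesi in general in characteristic zero) require substantially more delicate arguments than simply restricting the cells. This is precisely the content outsourced to \cite{eberhardt_springer_2021}, where the technical work establishing purity under the given hypotheses on $p$ is carried out; your proposal treats it as folklore. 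So while your route is plausible and historically grounded, as written it has a genuine gap at the point where the affine paving of the singular fiber is asserted.
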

\begin{proof}
Using the conditions on $p$ 
\cite[Theorem 1.1]{eberhardt_springer_2021} shows that the motive $\Mot(\mu^{-1}(\{x\}))$ of a Springer fiber is pure Tate which implies (PT). Moreover, there are only finitely many $G$-orbits in $\Nnil,$ see \cite{carter_finite_1993}, which shows that condition (FO) holds.
\end{proof}

The motivic extension algebra $E$ for the action of $A=G\times\mathbb{G}_m$ can be identified with Lusztig's \emph{graded affine Hecke algebra} associated to $G$, see \cite{lusztig_cuspidal_1988} and \cite{lusztig_affine_1989},
$$E\cong(\Chow_{A}^\bullet(Z)_\Q,\star)\cong\overline{\mathbb{H}}(G).$$
We can now use Theorem~\ref{thm:main} to recover \cite[Corollary 1.4]{eberhardt_springer_2021}.
\begin{theorem}\label{thm:heckealgebra}
There is an equivalence of categories
$$\DM^{Spr}_{A}(\Nnil,\Q)\cong \DperfZ(\overline{\mathbb{H}}(G)).$$
\end{theorem}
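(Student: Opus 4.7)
The plan is to apply Theorem~\ref{thm:main} directly once the two hypotheses and the algebra identification are in place. Concretely, I would proceed in three steps: first confirm that the preceding lemma supplies (PT) and (FO) for the maps $\mu \colon \M \to \Nnil$ (there is only a single such map in this setup, so $i = j$ and $\Tt^{Spr} = \{\mu_!(\un_\M)\}$); second, invoke Corollary~\ref{cor:extensionalgebraaschowgroups} to identify the motivic extension algebra with the equivariant Chow group $\Chow^A_\bullet(Z)_\Q$ equipped with the convolution product from \eqref{eq:convolution}; and third, invoke the theorem of Lusztig to identify this convolution algebra with $\overline{\mathbb{H}}(G)$. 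Combining these with Theorem~\ref{thm:main} then yields the claimed equivalence.

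In more detail, for the first step the lemma already cites \cite[Theorem 1.1]{eberhardt_springer_2021} for (PT) and the finiteness of nilpotent orbits \cite{carter_finite_1993} for (FO); both remain valid for the enlarged group $A = G \times \Gm$ since the $\Gm$-action preserves the $G$-orbits (it only rescales), so the orbit structure on $\Nnil$ is unchanged and fibers of $\mu$ under the $A$-action coincide with those under $G$. For the second step, Corollary~\ref{cor:extensionalgebraaschowgroups} gives the graded isomorphism
\[
    E^\bullet \;\cong\; \Chow_{d - \bullet}^A(Z)_\Q,
\]
where $d = \dim \M$; after the usual regrading (Chow \emph{codimension} versus dimension, using that $Z$ has pure dimension $d$ when the Steinberg variety for the Springer resolution is equidimensional), this becomes $\Chow^\bullet_A(Z)_\Q$ with the convolution product.

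The third and only substantive step is the identification of this convolution algebra with Lusztig's graded affine Hecke algebra $\overline{\mathbb{H}}(G)$. For $A = G \times \Gm$ this is precisely the content of \cite{lusztig_cuspidal_1988} and \cite{lusztig_affine_1989}, where the generators of $\overline{\mathbb{H}}(G)$ are constructed as explicit $A$-equivariant cycles on $Z$ (the diagonal and the conormals to the simple-reflection correspondences), and the defining relations are verified via transverse intersection computations on $\M$. I would simply cite this identification rather than reprove it. Note the minor subtlety that Lusztig's original construction uses equivariant Borel--Moore homology with $\Q_\ell$-coefficients; the passage to rational equivariant Chow groups is harmless because $Z$ admits an affine paving (Steinberg), so the cycle class map is an isomorphism after tensoring with $\Q$.

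Putting everything together: since (PT) and (FO) hold, Theorem~\ref{thm:main} gives
\[
    \DM^{Spr}_A(\Nnil, \Q) \;\xrightarrow{\sim}\; \DperfZ(E),
\]
and the algebra identification $E \cong \overline{\mathbb{H}}(G)$ concludes the proof. The only genuine obstacle is the third step --- the algebraic identification with the graded affine Hecke algebra --- but this is classical and can be cited; all the motivic content is encoded in the machinery of Theorem~\ref{thm:main}. I would also note for the reader that the case $A = G$ (without the $\Gm$-factor) gives a variant algebra where the polynomial part $S^W \subset \overline{\mathbb{H}}(G)$ is replaced by the $G$-equivariant Chow ring of a point modulo the appropriate quotient reflecting the loss of the loop grading.
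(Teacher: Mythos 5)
Your proof follows exactly the same route as the paper: verify (PT) and (FO) via the preceding lemma, identify the motivic extension algebra with $(\Chow^\bullet_A(Z)_\Q,\star)\cong\overline{\mathbb{H}}(G)$ by citing Lusztig, and apply Theorem~\ref{thm:main}. The only slight wrinkle is your closing aside on the case $A=G$: the paper states the resulting algebra is $S\rtimes\Q[W]$ (with $S=\operatorname{Sym}(X(T)_\Q)$), which is not quite what you describe, but this is tangential to the proof.
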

For $A=G$ a similar result holds, where $E=S\rtimes\Q[W]$ is the semidirect product of the symmetric algebra $S=\operatorname{Sym}(X(T)_\Q)$ of the rationalized character lattice of a maximal torus $T$ in $G$ and the group algebra of the Weyl group $W=N_G(T)/T.$

\begin{remark}
It would be very desirable to obtain a similar result for the affine Hecke algebra $\mathbb{H}(G).$ Let $A=G\times\mathbb{G}_m.$ The affine Hecke algebra arises as the $A$-equivariant $K$-theory of the Steinberg variety
$\mathbb{H}(G)=K_0^{A}(Z)_\Q.$
Lusztig's graded affine Hecke algebra arises from a completion process from the affine Hecke algebra. 
The Atiyah-Segal completion theorem and the Chern character map identifies
$$\mathbb{H}(G)_I^{\wedge}=(K_0^{A}(Z)_\Q)_I^{\wedge}\cong K_0(EA\times_{A}Z)_\Q\cong\prod_i\Chow^i_{A}(Z)_\Q\supset \bigoplus_i\Chow^i_{A}(Z)_\Q=\overline{\mathbb{H}}(G),$$
the graded affine Hecke algebra as (a subspace of) the completion of the affine Hecke algebra at the augmentation ideal $I$ of the representation ring of $A.$ So, \textbf{conjecturally}, there should be an equivalence between the category of \emph{Springer $K$-motives} on the nilpotent cone
$$\operatorname{DK}^{Spr}_A(\Nnil,\Q)\cong \Dperf(\mathbb{H}(G))$$
and the perfect derived category of the affine Hecke algebra. Here, $\operatorname{DK}_A(\Nnil,\Q)$ denotes a (yet to be defined) category of equivariant $K$-motives which is an equivariant version of the $K$-motives defined by the first author in \cite{eberhardt_k-motives_2019}.

This conjecture highlights another advantage of motivic sheaves. Namely, one can construct categories of motivic sheaves for generalized cohomology theories such as $K$-theory.
\end{remark}

\section{Quiver Hecke (KLR) and quiver Schur algebras}\label{sec:quiverheckeandquiverschur}
We will now apply Theorem~\ref{thm:main} to quiver flag varieties and quiver Hecke and quiver Schur algebras in type $A$ and $\widetilde{A}.$
\subsection{Quiver flag varieties} We first recall some basic definitions and facts about quiver flag varietes. We refer to \cite{stroppel_quiver_2014} and \cite{przezdziecki_quiver_2019} for more details.
Consider a quiver $Q$ with finite sets of vertices $Q_0$ and arrows $Q_1$ and source and target maps $$s,t:Q_1\rightrightarrows Q_0.$$
Denote by $\Gamma=\Z_{\geq 0} Q_0$ and $\Gamma^+=\Gamma\backslash \{0\}$ the sets of (non-trivial) dimension vectors. The \emph{dimension vector} of a $Q_0$-graded $k$-vector space $V=(V_i)_{i\in Q_0}$ is $\dim(V)=(\dim(V_i))_{i\in Q_0}\in \Gamma.$ 
We denote by
$$\Rep(V)=\prod_{a\in Q_1}\Hom_k(V_{s(a)},V_{t(a)})$$
the vector space of \emph{quiver representations} with underlying $Q_0$-graded vector space~$V.$
We fix a dimension vector $\mathbf{d}\in \Gamma$ and let $V$ be the standard vector space with $\dim V=\mathbf{d}.$ We often abbreviate $\Rep(\mathbf{d})=\Rep(V).$ There is a a natural conjugation action on $\Rep(\mathbf{d})$ by the group
$$\GL(\mathbf{d})=\prod_{i\in Q_0}\GL(V_i)=\prod_{i\in Q_0}\GL_{\mathbf{d}_i}(k).$$

A \emph{composition} of a dimension vector $\mathbf{d}$ is a tuple 
$\underline{\mathbf{d}}=(\underline{\mathbf{d}}^j)\in (\Gamma^+)^{\ell_{\underline{\mathbf{d}}}}$ 
which sums to $\mathbf{d}.$ A composition $\underline{\mathbf{d}}$ is called \emph{complete} if each $\underline{\mathbf{d}}^j$ is a unit vector.
We write $\Comp(\mathbf{d})$ and $\Compf(\mathbf{d})$ for the set of (complete) compositions of $\mathbf{d}.$ A \emph{partial flag} $\underline{V}$ of $V$ of type $\underline{\mathbf{d}}$ is a sequence of $Q_0$-graded $k$-vector spaces
$$0=V^0\subset V^1\subset\dots\subset V^{\ell_{\underline{\mathbf{d}}}}=V$$
such that $\dim V^j/V^{j-1}=\underline{\mathbf{d}}^i.$ We denote the smooth projective variety of such flags by $\Fl(V,\underline{\mathbf{d}})=\Fl(\underline{\mathbf{d}}).$
The partial flag $\underline{V}$ is called \emph{strictly $\rho$-stable} for a quiver representation $\rho\in \Rep(\mathbf{d})$ if 
\begin{align}
    \rho_a(V^j_{s(a)})\subset V^{j-1}_{t(a)}\text{ for all } i=1,\dots,\ell_{\underline{\mathbf{d}}}\text{ and } a\in Q_1. \label{eq:strictlystable}
\end{align}
We can hence consider the variety
$$\QQ(\underline{\mathbf{d}})=\{(\rho,\underline{V})\in \Rep(\mathbf{d})\times \Fl(\underline{\mathbf{d}}) \mid \underline{V}\text{ is strictly $\rho$-stable}\}.$$
The variety $\QQ(\underline{\mathbf{d}})$ is smooth and has a diagonal action by $\GL(\mathbf{d}).$  Projection yields a $\GL(\mathbf{d})$-equivariant proper map
$$\mu_{\underline{\mathbf{d}}}:\QQ(\underline{\mathbf{d}})\to \Rep(\mathbf{d}).$$
For a representation $M=(V,\rho)$, the fiber $\mu_{\underline{\mathbf{d}}}^{-1}(\rho)$ is called a \emph{(partial) quiver flag variety} and denoted by
$$\Fl(M,\underline{\mathbf{d}})=\{\underline{V}\in \Fl(V,\underline{\mathbf{d}})\mid \underline {V}\text{ is strictly $\rho$-stable}\}.$$
\subsection{Conditions (PT) and (FO)}\label{sec:conditionsptandfo}
We want to apply the general setup of Springer motives from Section~\ref{sec:generalsetup} to the collection of maps $\mu_{\underline{\mathbf{d}}}:\QQ(\underline{\mathbf{d}})\to \Rep(\mathbf{d}).$ In particular, we are interested in settings where the condition (PT) and (FO) from Section~\ref{sec:generalsetup} are fulfilled. We restrict our attention to the following two cases.
\begin{enumerate}
    \item[($A$)] the quiver $Q$ is a \emph{Dynkin quiver} where the underlying graph is a Dynkin diagram of type $A.$
    \item[($\widetilde{A}$)] the quiver $Q$ is the \emph{cyclic quiver} with cyclic orientation, so the underlying graph is a Dynkin diagram of type $\widetilde{A}$.
\end{enumerate}
\begin{proposition}\label{prop:condictionsptandpointypea}
In case ($A$) and ($\widetilde{A}$) the condtions (PT) and (PO) hold.
\end{proposition}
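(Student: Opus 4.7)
The plan is to establish (FO) and (PT) separately, in each case first reducing to the nilpotent locus of $\Rep(\mathbf{d})$ and then invoking classification and affine paving results.

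For (FO), the first observation is that any $(\rho,\underline{V})\in\QQ(\underline{\mathbf{d}})$ satisfies a strong nilpotency constraint: iterating the condition \eqref{eq:strictlystable} along the flag forces every composition of $\ell_{\underline{\mathbf{d}}}$ arrows of $Q$ to act as zero on $\rho$. Hence the image $\mu_{\underline{\mathbf{d}}}(\QQ(\underline{\mathbf{d}}))$ is contained in the closed subvariety of nilpotent representations in $\Rep(\mathbf{d})$. In type $A$ there are no oriented cycles, so every representation is automatically nilpotent, and Gabriel's theorem supplies finitely many isomorphism classes of dimension $\mathbf{d}$; since $\GL(\mathbf{d})$-orbits on $\Rep(\mathbf{d})$ are exactly isomorphism classes, (FO) follows. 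In type $\widetilde{A}$ with cyclic orientation the nilpotent representations of a fixed dimension vector are parametrised (by multisegments, or equivalently Kostant partitions, after Lusztig), which again yields only finitely many orbits.

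For (PT), I would invoke Proposition~\ref{prop:affinepavingpuretate}: the fiber $\mu_{\underline{\mathbf{d}}}^{-1}(\{x\})=\Fl(M,\underline{\mathbf{d}})$ is projective, so $\Mot(\mu_{\underline{\mathbf{d}}}^{-1}(\{x\}))=\Mot^c(\mu_{\underline{\mathbf{d}}}^{-1}(\{x\}))$, and it suffices to exhibit an affine paving of each such quiver flag variety. In type $A$ this is classical: $\Fl(M,\underline{\mathbf{d}})$ embeds into an ordinary partial flag variety of $V$, and the restriction of the Schubert stratification (equivalently, Spaltenstein's stratification of Springer-type fibres) provides the desired paving by affine spaces. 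In type $\widetilde{A}$ the affine paving is exactly the new geometric input highlighted in the abstract; the natural strategy is to choose a $\Gm$-action that rescales the arrows of the cycle and preserves \eqref{eq:strictlystable}, and to run a Bia{\l}ynicki-Birula decomposition, arguing by induction on $\ell_{\underline{\mathbf{d}}}$ that the fixed loci are themselves (products of) type $A$ quiver flag varieties, so that the attracting cells are affine spaces.

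The hard part is the affine paving in type $\widetilde{A}$: the presence of the oriented cycle obstructs the immediate comparison with Schubert cells in a classical flag variety, and one has to verify that the chosen torus action is compatible with strict stability and that the attracting strata are genuinely affine (rather than merely iterated affine bundles that might fail to be trivialisable at the level of motives of varieties over $\overline{\F}_p$). The nilpotency reduction needed for (FO) is essentially formal by comparison, and the type $A$ case of (PT) reduces to known results, so the entire geometric content of the proposition is concentrated in producing this paving for cyclic quivers.
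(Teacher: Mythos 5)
Your overall architecture matches the paper's: reduce to the nilpotent locus for (FO) via the observation that \eqref{eq:strictlystable} forces nilpotency, invoke the classification of nilpotent representations (Gabriel in type $A$, cyclic-quiver classification in type $\widetilde{A}$), and obtain (PT) from affine pavings of the quiver flag varieties via Proposition~\ref{prop:affinepavingpuretate} and Proposition~\ref{prop:puretatefiberimpliespuretate}. That part is fine and essentially what the paper does.

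However, there are two genuine gaps in the (PT) half. First, your type $A$ claim --- that $\Fl(M,\underline{\mathbf{d}})$ sits inside an ordinary partial flag variety and inherits an affine paving from the Schubert (or Spaltenstein) stratification --- is asserted as ``classical'', but this is actually a nontrivial recent result: the intersection of a Schubert cell with a quiver flag variety need not be an affine space, and the paper correspondingly appeals to the works of Cerulli-Irelli--Esposito--Franzen--Reineke and of Maksimau rather than to a direct Schubert-restriction argument. Second, for type $\widetilde{A}$ you correctly identify the paving as the crux of the proposition and sketch a Bia{\l}ynicki-Birula strategy via a $\Gm$-action rescaling arrows, while flagging yourself the unresolved issue of trivializing the attracting strata. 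The paper takes a different route altogether (Theorem~\ref{thm:cylicquiverflagpaving}, extending Sauter's method for complete flags): it fibres $\Fl(M,\underline{\mathbf{d}})$ over the Grassmannian of $\underline{\mathbf{d}}^1$-dimensional subspaces of $\soc(M)$, constructs a section of $\Aut(M)\to P$ over a suitable parabolic $P$ of $\GL(\soc(M))$, and uses this to exhibit each preimage of a Schubert cell as a product of an affine space with a smaller quiver flag variety, inducting on the length of the composition. Your BB sketch is plausible in spirit, but as written it does not establish the paving; the socle-induction is the missing idea, and without it the proposition is not proved in case $(\widetilde{A})$.
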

\begin{proof}
In the case ($A$) there are only finitely many isomorphism classes of quiver representation with a fixed dimension vector by Gabriel's theorem. 
Hence there are only finitely many $\GL(\mathbf{d})$ orbits in $\Rep(\mathbf{d})$ and condition (FO) is fulfilled. 
The works of Cerulli-Irelli--Esposito--Franzen--Reineke \cite{cerulli_irelli_cell_2021} and Maksimau \cite{maksimau_flag_2019} show that type $A$ partial quiver flag varieties admit an affine pavings. 
This implies condition (PT) using Proposition~\ref{prop:affinepavingpuretate} and Proposition~\ref{prop:puretatefiberimpliespuretate}. 

In the case ($\widetilde{A}$) there can be infinitely many isomorphism classes of quiver representations with fixed dimension vector. 
However, any representation $\rho\in \Rep(\mathbf{d})$ in the image of $\mu_{\underline{\mathbf{d}}}$ fulfills \eqref{eq:strictlystable} for some flag $\underline{V}\in \Fl(\underline{\mathbf{d}}).$ 
This implies that $\rho$ is nilpotent, that is, there is some $n\in \Z_{\geq 0}$ such that $\rho_{a_1}\rho_{a_2}\dots\rho_{a_n}=0$ for any sequence of composable arrows $a_1,\dots,a_n\in Q_1.$ 
There are only finitely many isomorphism classes of nilpotent quiver representations with fixed dimension vector in this case, see Section~\ref{sec:cyclicquiver}. 
This implies condition (FO). 
Moreover, we will show in Section~\ref{sec:cyclicquiver} that the partial quiver flag varieties admit affine pavings which implies condition (PT).
\end{proof}
\begin{remark}
Quiver flag varieties in type $D$ also admit affine pavings by \cite{maksimau_flag_2019}, so our results also apply here. The same should hold in type $E.$ 
\end{remark}
\subsection{Affine pavings for quiver flag varieties in type $\widetilde{A}$}\label{sec:cyclicquiver}
Let $Q$ be the cyclic quiver on $n$ vertices. We label vertices and arrows $Q_0,Q_1=\Z/n$ such that $s(i)=i$ and $t(i)=i+1.$

Let $M=(V,\rho)$ be a representation of $M$ with dimension vector $\mathbf{d}=\dim V\in \Gamma$ and let $\underline{\mathbf{d}}\in\Comp(\mathbf{d})$ be a composition. he goal of this section is to prove the following theorem.
\begin{theorem}\label{thm:cylicquiverflagpaving}
If $M$ is a nilpotent representation of the cyclic quiver with cylic orientation $Q,$ then for all $\underline{\mathbf{d}}\in\Comp(\mathbf{d})$ the (partial) quiver flag variety $\Fl(M,\underline{\mathbf{d}})$ admits an affine paving.
\end{theorem}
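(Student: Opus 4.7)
The plan is to produce the affine paving via a Białynicki-Birula decomposition coming from a torus action on $M$. The smoothness of $\Fl(M,\underline{\mathbf{d}})$ is standard (it is cut out by transversal conditions inside the smooth projective $\Fl(\dim V,\underline{\mathbf{d}})$), and its projectivity is inherited from $\Fl(\dim V,\underline{\mathbf{d}})$. So BB will apply as soon as we exhibit a $\Gm$-action with isolated fixed points.

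First I would recall the classification of indecomposable nilpotent representations of the cyclic quiver: they are parametrised by a starting vertex $i\in Q_0=\Z/n$ and a length $\ell\geq 1$, and each such $N(i,\ell)$ has a basis $e_0,\dots,e_{\ell-1}$ with $e_k\in V_{i+k}$ and $\rho(e_k)=e_{k+1}$ (and $\rho(e_{\ell-1})=0$). In particular every indecomposable nilpotent is uniserial, so its lattice of subrepresentations is the chain $0=N^{(\ell)}\subset N^{(\ell-1)}\subset\dots\subset N^{(0)}=N(i,\ell)$, where $N^{(k)}=\langle e_k,\dots,e_{\ell-1}\rangle$. Decompose the given nilpotent $M$ as $M=\bigoplus_{\alpha}N_\alpha$ with $N_\alpha\cong N(i_\alpha,\ell_\alpha)$.

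Next I would build the $\Gm$-action. The stabiliser $\Aut(M)\subset\GL(\mathbf{d})$ of $\rho$ contains the torus $T=\prod_\alpha\Gm$ acting by independent scaling on each summand $N_\alpha$ (this clearly commutes with $\rho$, hence lies in $\Aut(M)$). Since $T\subset\Aut(M)$, the action of $T$ on $V$ preserves $\rho$ and thus restricts from $\Fl(\dim V,\underline{\mathbf{d}})$ to an action on the fibre $\Fl(M,\underline{\mathbf{d}})=\mu_{\underline{\mathbf{d}}}^{-1}(\rho)$. Choose a generic one-parameter subgroup $\lambda:\Gm\to T$, say with pairwise distinct weights on the summands $N_\alpha$.

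I would then identify the $\Gm$-fixed locus. A $\lambda$-fixed flag $\underline{V}\in\Fl(M,\underline{\mathbf{d}})$ has each subspace $V^j$ stable under all characters of $\lambda$, hence decomposes as $V^j=\bigoplus_\alpha(V^j\cap N_\alpha)$. The strict $\rho$-stability forces each $V^j\cap N_\alpha$ to be a subrepresentation of $N_\alpha$, so by uniseriality it equals some $N_\alpha^{(k_j^\alpha)}$, and strict stability translates into the inequalities $k_j^\alpha\leq k_{j-1}^\alpha\leq k_j^\alpha+1$ with the boundary conditions $k_0^\alpha=\ell_\alpha$ and $k_{\ell_{\underline{\mathbf{d}}}}^\alpha=0$. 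Subject to the compatibility $\sum_\alpha(k_{j-1}^\alpha-k_j^\alpha)\dim_{Q_0}(\cdot)=\underline{\mathbf{d}}^j$, there are only finitely many solutions, so the fixed locus is a finite set of points. Białynicki-Birula (applied to the smooth projective $\Gm$-variety $\Fl(M,\underline{\mathbf{d}})$) then produces a stratification by affine cells indexed by these fixed points, i.e.\ an affine paving.

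The main obstacle I foresee is the case in which $M$ has isotypic summands of multiplicity $>1$: in that case $\Aut(M)$ contains $\prod_{[N]}\GL_{m_N}$ and a naive ``one scalar per isomorphism class'' $\Gm$ will have positive-dimensional fixed locus inside each $\Gr(m_N)$-factor. This is resolved by picking $T=\prod_\alpha\Gm$ (one copy of $\Gm$ per summand in a fixed decomposition, with genuinely distinct weights), rather than a torus indexed by isomorphism classes. One needs to check that, for a generic $\lambda$, only flags compatible with the concrete chosen decomposition are fixed; this follows from the fact that distinct weights separate the summands $N_\alpha$ in the weight decomposition of $V$. The remainder of the argument is then clean: the affine paving of $\Fl(M,\underline{\mathbf{d}})$ is the BB paving associated to~$\lambda$.
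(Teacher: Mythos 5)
The central obstacle is your smoothness claim: the partial quiver flag variety $\Fl(M,\underline{\mathbf{d}})$ is \emph{not} smooth in general, and the parenthetical justification ``it is cut out by transversal conditions'' is false. The ambient variety $\QQ(\underline{\mathbf{d}})$ is smooth, but the map $\mu_{\underline{\mathbf{d}}}$ is not a submersion over all of $\Rep(\mathbf{d})$, and the fibers over non-generic (in particular, nilpotent) representations are typically singular. Already in the simplest case $n=1$ (Jordan quiver), $\Fl(M,\underline{\mathbf{d}})$ for a complete composition is the Springer fiber of a nilpotent matrix, which is singular as soon as $M$ has two or more Jordan blocks and the fiber is reducible. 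Since Bia\l ynicki--Birula, in the form that produces an affine paving, needs smoothness of the ambient $\Gm$-variety, your argument does not go through as written. The issue is not cosmetic: for a singular closed $\Gm$-invariant subvariety $X$ of a smooth projective $Y$, the intersections $X\cap Y_w$ with the BB cells of $Y$ give a filtrable decomposition of $X$, but the pieces need not be affine spaces; showing that they are is exactly the content that must be supplied.

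Your fixed-point analysis is correct and useful: the $T$-fixed flags in $\Fl(M,\underline{\mathbf{d}})$ do split along a chosen decomposition of $M$ into indecomposables, and uniseriality of the $E(i,l)$'s forces each graded piece to be a term of the radical filtration, so the fixed locus is indeed finite. But this only tells you that the decomposition is into finitely many strata, not that the strata are affine. To close the gap you would need an explicit local computation showing that the attracting set of each fixed flag inside the (singular) variety is an affine space --- this is essentially what Spaltenstein-type arguments do, and it is not an immediate consequence of BB.

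For comparison, the paper's proof takes a different, purely inductive route following Sauter: since $M$ is nilpotent, $\soc(M)=\ker(\rho)$, so the first step $V^1$ of any strictly stable flag lies in $\soc(M)$. This gives a map $p:\Fl(M,\underline{\mathbf{d}})\to\Gr(\soc(M),\underline{\mathbf{d}}^1)$. One then takes an affine paving of the Grassmannian by orbits of a Borel $B$ (chosen compatibly with the radical filtration restricted to the socle), and the key technical input is a section $\theta:P\to\Aut(M)$ of the restriction map $\Aut(M)\to P$, which lets one trivialise $p$ over each cell: the preimage of a cell is a product of that cell with a smaller quiver flag variety $\Fl(M/wV',\underline{\mathbf{d}}^\star)$. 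Induction then gives the paving. This argument never needs smoothness, which is why it succeeds where the BB argument does not. If you want to salvage your approach, you would need to prove directly that the $\Gm$-attracting sets inside $\Fl(M,\underline{\mathbf{d}})$ are affine; in effect, that amounts to redoing the inductive computation.
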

For $\underline{\mathbf{d}}\in\Compf(\mathbf{d})$ this was shown by Sauter \cite{sauter_cell_2016}. We generalize her approach to work for arbitrary $\underline{\mathbf{d}}\in\Comp(\mathbf{d}).$

By \cite[Proposition 3.24]{schiffmann_lectures_2012} every  nilpotent representations $M=(V,\rho)$ of $Q$ is isomorphic to a direct sums of representations $E(i,l)$ defined as follows.
For $i\in \Z/n$ and $l\in \Z_\geq{0}$ we denote by $E(i,l)=(V,\rho)$ the representation with basis $e_{i-(l-1)},\dots,e_{i-1},e_i,$ such that $e_j\in V_j$ lives on the vertex $j\in\Z/n$ and $\rho_j(e_j)=e_{j+1}$ and $\rho_i(e_i)=0.$ The representation $E(i,l)$ is indecomposable and nilpotent with socle $\soc(E(i,l))=E(i,1)$ and radical filtration $\rad^n(E(i,l))=E(i,l-n).$

We discuss how to lift automorphims of the socle $\soc(M)$ to $M.$
Since $\rho$ restricted to $\soc(M)$ vanishes, we simply treat $\soc(M)$ as a $Q_0$-graded vector space. Choose $m>0$ such that $\rad^m(M)=0.$ Consider the flag obtained by intersecting the radical filtration of $M$ with the socle
$$I'=(\soc(M)\cap\rad^m(M)\subset\dots\subset\soc(M)\cap\rad(M)\subset \soc(M)).$$
Refine $I'$ to a complete flag $I$ of $\soc(M)$ and denote by $B\subset P\subset \GL(\soc(M))$ the stabilizers of $I$ and $I',$ respectively.
Denote the automorphism group of $M$ by $\Aut(M)\subset\GL(V).$ Restriction yields a natural morphism $\Res:\Aut(M)\to P.$
\begin{lemma} The morphism $\Res:\Aut(M)\to P$ has a section $\theta:P\to\Aut(M).$ 
\end{lemma}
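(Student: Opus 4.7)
The plan is to construct $\theta$ explicitly using the Krull--Schmidt decomposition of $M$ into the indecomposables $E(i,l)$. First, fix an isomorphism $M\cong\bigoplus_{i,l,\alpha}E(i,l)^{(\alpha)}$ where $\alpha$ indexes the $m_{i,l}$ copies of $E(i,l)$, and equip each summand with its standard basis $f^{(i,l,\alpha)}_{0},\dots,f^{(i,l,\alpha)}_{l-1}$, so $f^{(i,l,\alpha)}_{t}$ sits at vertex $i-l+1+t$ and $\rho$ acts by $f^{(i,l,\alpha)}_{t}\mapsto f^{(i,l,\alpha)}_{t+1}$ (with $f^{(i,l,\alpha)}_{l}=0$). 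Since the socle of $E(i,l)^{(\alpha)}$ is spanned by $f^{(i,l,\alpha)}_{l-1}=\rho^{l-1}(f^{(i,l,\alpha)}_{0})$, it lies in $\rad^{l-1}(M)\setminus\rad^{l}(M)$, and so
$$\soc(M)\cap\rad^{k}(M)=\bigoplus_{l>k}\bigoplus_{i,\alpha}k\cdot f^{(i,l,\alpha)}_{l-1}.$$

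Given $\phi\in P$, the $Q_{0}$-grading and the filtration condition force
$$\phi(f^{(i,l,\alpha)}_{l-1})=\sum_{l'\geq l,\,\beta}c(\phi)^{(i,l,\alpha)}_{(i,l',\beta)}\,f^{(i,l',\beta)}_{l'-1}$$
(only $l'\geq l$ and only vertex $i$ appear). I would define $\theta(\phi)\in\End(V)$ by the uniform formula
$$\theta(\phi)(f^{(i,l,\alpha)}_{t})=\sum_{l'\geq l,\,\beta}c(\phi)^{(i,l,\alpha)}_{(i,l',\beta)}\,f^{(i,l',\beta)}_{l'-l+t},\qquad 0\leq t\leq l-1.$$
The index $l'-l+t$ lies in $\{0,\dots,l'-1\}$ since $l'\geq l$, so the formula is well defined; both basis vectors sit at vertex $i-l+1+t$, so $\theta(\phi)$ preserves the $Q_{0}$-grading; and applying $\rho$ shifts $t$ to $t+1$ on both sides, with the case $t=l-1$ handled by the vanishing convention, so $\theta(\phi)$ is $\rho$-equivariant.

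It then remains to check that $\theta\colon P\to\Aut(M)$ is a morphism of algebraic groups sectioning $\Res$. The coefficients $c(\phi)^{(i,l,\alpha)}_{(i,l',\beta)}$ depend linearly on $\phi$, so $\theta$ is a morphism of varieties; a short index computation gives $\theta(\phi_{1}\phi_{2})=\theta(\phi_{1})\theta(\phi_{2})$ and $\theta(\id)=\id$, so in particular each $\theta(\phi)$ is invertible and lands in $\Aut(M)$. Specialising the formula to $t=l-1$ recovers $\phi$ itself, so $\Res\circ\theta=\id_{P}$.

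The key point, and the only place where one must be careful, is to see why exactly the parabolic condition is needed for the formula to produce a well-defined lift: a general element of $\GL(\soc(M))$ could send some $f^{(i,l,\alpha)}_{l-1}$ to a combination involving $f^{(i,l',\beta)}_{l'-1}$ with $l'<l$, and then the formula would require the nonexistent basis vector $f^{(i,l',\beta)}_{l'-l+t}$ at $t=0$. The filtration $I'$ is engineered precisely so that its stabilizer $P$ forbids this; the finer flag $I$ and its Borel stabilizer $B$ do not enter in this lemma and will only be used further on in the construction of the affine paving.
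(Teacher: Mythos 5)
Your proof is correct and follows the same basic strategy as the paper: decompose $M$ into indecomposables $E(i,l)$ via Krull--Schmidt, observe that the parabolic condition forces the matrix of $\phi$ on $\soc(M)$ to send the socle of a shorter summand only into socles of summands of equal or greater length at the same vertex, and lift uniformly along the radical filtration. Where the paper reduces to the two-summand case $E(i,l_1)\oplus E(i,l_2)$ via elementary matrices, you give a single closed formula for $\theta(\phi)$ on all summands at once and verify $\rho$-equivariance and multiplicativity directly; this is a slightly more explicit packaging of essentially the same construction.
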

\begin{proof}
We follow similar arguments to \cite[Lemma 1]{sauter_cell_2016}.

By the explicit description of nilpotent representations, $M$ is a direct sum of modules of the form $E(i,l).$ We can hence choose a basis of $\soc(M)$ by a choosing non-zero vectors in each $\soc(E(i,l)).$ With respect to this basis, $P$ is generated by elementary matrices and the proof can be reduced to the case $M=E(i,l_1)\oplus E(i,l_2).$ We denote the standard basis vectors of $E(i,l_1)$ and $E(i,l_2)$ by $e_j$ and $f_j,$ respectively. 

The socle of $\soc(M)=\soc(E(i,l_1))\oplus \soc(E(i,l_2))=ke_i\oplus kf_i$ is two-dimensional in degree $i.$ Assume that $l_1>l_2.$ Then the radical filtration of the socle is 
$$I'=(0\subset ke_i\subset ke_i\oplus kf_i).$$
Let $g\in P.$ For $j< l_2$ we define the action $\theta(g)$ on $ke_{i-j}\oplus kf_{i-j}$ via the natural isomorphism $ke_{i-j}\oplus kf_{i-j}\cong ke_i\oplus kf_i.$ For $j\geq l_2,$ we define the action $\theta(g)$ on $ke_{i-j}$ via the natural isomorphism $ke_{i-j}\cong ke_i.$ It can easily be checked that $\theta(g)\in \Aut(M)$ and $\Res\theta(g)=g.$ The case $l_1=l_2$ is proven similarly.
\end{proof}

Since $M=(V,\rho)$ is nilpotent, we have $\soc(M)=\ker(\rho).$ Hence, for every flag $\underline{V}=(0\subset V^1\subset\dots)$ that is strictly $\rho$-stable we have $V_1\subset \soc(M),$ see \eqref{eq:strictlystable}. We hence get a natural map
\begin{align}
    p:\Fl(M,\underline{\mathbf{d}})\to \Gr(\soc(M),\underline{\mathbf{d}}^1),\,\underline{V}\mapsto V_1\label{eqn:inductionforquiverflag}
\end{align}
from the partial quiver flag variety to the Grassmannian of subspaces of $\soc(M)$ with dimension vector $\underline{\mathbf{d}}^1=\dim V^1.$ We  construct an affine paving of $\Fl(M,\underline{\mathbf{d}})$ using $p$ inductively. We will show that the preimage under $p$ of a $B$-orbit in the Grassmannian has an affine paving and then deduce the claim.
\begin{proof}[Proof of Theorem~\ref{thm:cylicquiverflagpaving}]
Choose any $V'\in \Gr(\soc(M),\underline{\mathbf{d}}_1)$ that is stabilized by $B.$ Denote the stabilizer of $V'$ by $Q\subset \GL(\soc(M))$ and the respective Weyl groups by $W_Q\subset W.$ Denote by $W^Q$ the set of shortest coset representatives in $W/W_Q.$ 
Let $U\subset B$ be the unipotent radical, $U^-\subset G$ its opposite and $U_x=U\cap xU^-x^{-1}$ for $x\in W.$ Then $U_x\cong \mathbb{A}^{l(x)}$ is an affine space. 

The Grassmannian $\Gr(\soc(M),\underline{\mathbf{d}}_1)$ admits an affine paving by $B$-orbits
$$\Gr(\soc(M),\underline{\mathbf{d}}^1)=\biguplus_{w\in W^Q}\Gr(\soc(M),\underline{\mathbf{d}}^1)_w$$
such that for $w\in W^Q$ there is an isomorphism
$$t_w: U_w\to \Gr(\soc(M),\underline{\mathbf{d}}^1)_w, u\mapsto uwV'.$$
Taking preimages under the map $p$ from \eqref{eqn:inductionforquiverflag} yields a decomposition 
$$\Fl(M,\underline{d})=\biguplus_{w\in W^Q}\Fl(M,\underline{d})_w.$$
Denote by $\underline{\mathbf{d}}^\star=(\underline{\mathbf{d}}^2,\dots)\in \Comp(\mathbf{d}-\underline{\mathbf{d}}^1)$ the composition obtained by removing the first entry $\underline{\mathbf{d}}^1$ from $\underline{\mathbf{d}}.$ For $w\in W^Q$ consider the map
$$\alpha:U_w\times \Fl(M/wV',\underline{\mathbf{d}}^\star)\to \Fl(M,\underline{d})_w,\, (u,\underline{W})\mapsto \theta(u)(\underline{W}+wV')$$
where for a flag $\underline{W}=(0\subset W^1\subset\dots)$ of $M/wV'$ we denote the lift to a flag of $M$ by $\underline{W}+wV=(0\subset wV\subset W^1+wV\subset\dots).$ 
The map $\alpha$ is an isomorphism with inverse
$$\beta:\Fl(M,\underline{d})_w\to U_w\times \Fl(M/wV',\underline{\mathbf{d}}^\star),\, \underline{V}\mapsto(u(V^1), \theta(u(V^1)^{-1})(\underline{V})/wV')$$
where we write $u(V^1)=t_w^{-1}(V^1)$ and for a flag $\underline{V}=(0\subset V^1\subset\dots)$ of $M$ we denote by $\underline{V}/V^1=(0\subset V^2/V^1\subset \dots)$ the projection to a flag of $M/V^1.$

By induction, each $\Fl(M/wV',\underline{\mathbf{d}}^\star)$ admits an affine paving which implies that $\Fl(M,\underline{\mathbf{d}})$ does.
\end{proof}

\subsection{Quiver Hecke and quiver Schur algebras}
Assume that we are in the cases ($A$) or ($\widetilde{A}$) of Section~\ref{sec:conditionsptandfo}. For a fixed dimension vector $\mathbf{d}\in \Gamma$ we consider the collection of $\GL(\mathbf{d})$-equivariant maps 
$$\mu_{\underline{\mathbf{d}}}:\QQ(\underline{\mathbf{d}})\to \Rep(\mathbf{d})$$
where $\underline{\mathbf{d}}\in \Compf(\mathbf{d})$ ranges over all \emph{complete compositions}. 
Denote by $R_\mathbf{d}$ the \emph{quiver Hecke (KLR) algebra} associated to $Q$ and $\mathbf{d}$ as defined by Khovanov--Lauda \cite{khovanov_diagrammatic_2009} and Rouquier \cite{rouquier_2-kac-moody_2008}. 

\begin{theorem}\label{thm:quiverheckealgebra}
There is an equivalence of categories 
$$\DM^{Spr}_{\GL(\mathbf{d})}(\Rep(\mathbf{d}),\Q)\cong \DperfZ(R_\mathbf{d})$$ 
between Springer motives with respect to $\mu_{\underline{\mathbf{d}}}:\QQ(\underline{\mathbf{d}})\to \Rep(\mathbf{d})$ for complete compositions $\underline{\mathbf{d}}\in \Compf(\mathbf{d})$ and the perfect derived category of graded modules of the quiver Hecke algebra.
\end{theorem}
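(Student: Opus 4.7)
The plan is to apply Theorem~\ref{thm:main} to the collection of maps $\mu_{\underline{\mathbf{d}}}:\QQ(\underline{\mathbf{d}})\to \Rep(\mathbf{d})$ with $\underline{\mathbf{d}}\in\Compf(\mathbf{d})$ and then identify the resulting motivic extension algebra with the quiver Hecke algebra $R_\mathbf{d}$. The hypotheses (PT) and (FO) have already been checked in Proposition~\ref{prop:condictionsptandpointypea} for both cases $(A)$ and $(\widetilde{A})$; in case $(\widetilde{A})$ this crucially uses Theorem~\ref{thm:cylicquiverflagpaving} on affine pavings. Hence Theorem~\ref{thm:main} immediately yields an equivalence of categories
$$\DM^{Spr}_{\GL(\mathbf{d})}(\Rep(\mathbf{d}),\Q) \;\cong\; \DperfZ(E),$$
where $E$ is the motivic extension algebra built from the $\mu_{\underline{\mathbf{d}}}$.

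The only remaining task is to identify $E$ with the quiver Hecke algebra $R_\mathbf{d}$. By Corollary~\ref{cor:extensionalgebraaschowgroups}, as a graded algebra $E$ is the direct sum
$$E^{\bullet}\;\cong\;\bigoplus_{\underline{\mathbf{d}},\underline{\mathbf{d}}'\in\Compf(\mathbf{d})}\Chow^{\GL(\mathbf{d})}_{d_{\underline{\mathbf{d}}'}-\bullet}(Z_{\underline{\mathbf{d}},\underline{\mathbf{d}}'})_\Q$$
equipped with the convolution product \eqref{eq:convolution}. On the other hand, the geometric realisation theorem of Varagnolo--Vasserot (and Rouquier in type $A$) identifies the $\ell$-adic extension algebra $E^{\acute{e}t}_\ell$, which by Proposition~\ref{prop:elladicextensionalgebraasborelmoore} is the convolution algebra on the equivariant Borel--Moore homology of the Steinberg varieties $Z_{\underline{\mathbf{d}},\underline{\mathbf{d}}'}$, with the base change $R_\mathbf{d}\otimes_\Q\Q_\ell$ of the KLR algebra.

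Combining this with the isomorphism $E\otimes_\Q\Q_\ell\cong E^{\acute{e}t}_\ell$ furnished by Theorem~\ref{thm:main} gives $E\otimes_\Q\Q_\ell\cong R_\mathbf{d}\otimes_\Q\Q_\ell$. To descend to $\Q$, I would observe that because condition (PT) holds, the Steinberg varieties $Z_{\underline{\mathbf{d}},\underline{\mathbf{d}}'}$ admit equivariant affine pavings (the products of the affine pavings on the individual quiver flag varieties of Theorem~\ref{thm:cylicquiverflagpaving}, compatible with the $\GL(\mathbf{d})$-action), so the cycle class map
$$\Chow^{\GL(\mathbf{d})}_{\bullet}(Z_{\underline{\mathbf{d}},\underline{\mathbf{d}}'})_\Q\;\longrightarrow\; H^{BM,\GL(\mathbf{d})}_{2\bullet}(Z_{\underline{\mathbf{d}},\underline{\mathbf{d}}'},\Q_\ell)$$
is an isomorphism and is compatible with convolution. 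Hence the identification between $E^{\acute{e}t}_\ell$ and $R_\mathbf{d}\otimes\Q_\ell$ is already defined rationally, proving $E\cong R_\mathbf{d}$ as graded $\Q$-algebras.

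The main obstacle is the identification of the convolution algebra on equivariant Chow groups with $R_\mathbf{d}$. In type $A$ this is essentially due to Varagnolo--Vasserot; in type $\widetilde{A}$ with cyclic orientation the same geometric construction applies, but one has to verify carefully that the KLR generators and relations admit the same geometric interpretation. Here one uses the affine paving from Theorem~\ref{thm:cylicquiverflagpaving} to compute the relevant Chow groups and to check that the classes of the KLR generators satisfy the required relations under convolution. This is the most delicate part of the argument, but it is analogous to the $\ell$-adic computation already present in the literature and requires no genuinely new input once the affine paving is established.
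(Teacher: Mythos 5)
Your overall strategy matches the paper's: invoke Proposition~\ref{prop:condictionsptandpointypea} to verify (PT) and (FO), apply Theorem~\ref{thm:main} to get $\DM^{Spr}_{\GL(\mathbf{d})}(\Rep(\mathbf{d}),\Q)\cong\DperfZ(E)$, and then identify $E$ with $R_\mathbf{d}$ by appealing to Varagnolo--Vasserot. The difference is in how you carry out the identification. The paper argues directly over $\Q$: one redoes the Varagnolo--Vasserot computation in equivariant Chow groups rather than Borel--Moore homology, using the affine pavings to ensure the two theories agree, so the identification $E\cong R_\mathbf{d}$ is produced natively as an isomorphism of $\Q$-algebras. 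You instead go through the $\ell$-adic side ($E\otimes\Q_\ell\cong E^{\acute{e}t}_\ell\cong R_\mathbf{d}\otimes\Q_\ell$) and then descend to $\Q$. This detour is unnecessary and introduces two soft spots. First, an isomorphism of $\Q$-algebras after base change to $\Q_\ell$ does not in general descend; your descent argument — that the VV isomorphism is realised by classes of algebraic cycles and hence already lives over $\Q$ via the cycle class map — is the right idea but needs to be stated carefully, since the cycle class map has $\Q$-source and $\Q_\ell$-target (so ``the cycle class map is an isomorphism'' must mean after $\otimes\Q_\ell$, and what one really uses is that the generators and relations of $R_\mathbf{d}$ are witnessed by algebraic cycles over $\Q$). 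Second, your parenthetical that the Steinberg varieties carry ``products of the affine pavings'' is imprecise: $Z_{\underline{\mathbf{d}},\underline{\mathbf{d}}'}$ is a \emph{fiber} product over $\Rep(\mathbf{d})$, not a product, and the fibers of the projection to $\QQ(\underline{\mathbf{d}})$ vary; what is actually used is a cellular-fibration type argument, which is precisely the content behind the paper's phrase ``their arguments apply unchanged making use of the fact that the partial quiver flag varieties admit affine pavings.'' In short, you arrive at the same place as the paper but with an extra round-trip through $\Q_\ell$ that you must then undo; the paper's direct adaptation of Varagnolo--Vasserot to Chow groups is cleaner and avoids the descent problem altogether.
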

\begin{proof}
The motivic extension algebra $E$ can be identified with $R_\mathbf{d}.$ To see this, one can adapt the proof of Varagnolo--Vasserot \cite[Theorem 3.6]{varagnolo_canonical_2011} from the context of equivariant Borel--Moore homology to Chow groups. Their arguments apply unchanged making use of the fact that the partial quiver flag varieties admit affine pavings and hence their equivariant Borel--Moore homology and Chow groups coindice.
By Proposition~\ref{prop:condictionsptandpointypea} conditions (PT) and (PO) hold and the statement follows by Theorem~\ref{thm:main}.
\end{proof}

If we let $\underline{\mathbf{d}}\in \Comp(\mathbf{d})$ range over all compositions, the  motivic extension algebra $E$ can be identified with the \emph{quiver Schur algebra} $A_{\mathbf{d}}$ defined by Stroppel--Webster \cite{stroppel_quiver_2014} and we get:
\begin{theorem}\label{thm:quiverschuralgebra} There is an equivalence of categories
$$\DM^{Spr}_{\GL(\mathbf{d})}(\Rep(\mathbf{d}),\Q)\cong \DperfZ(A_\mathbf{d})$$
between Springer motives with respect to $\mu_{\underline{\mathbf{d}}}:\QQ(\underline{\mathbf{d}})\to \Rep(\mathbf{d})$ for compositions $\underline{\mathbf{d}}\in \Comp(\mathbf{d})$ and the perfect derived category of graded modules of the quiver Schur algebra.
\end{theorem}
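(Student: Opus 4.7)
The plan is to mimic the proof of Theorem~\ref{thm:quiverheckealgebra} line by line, the only new task being the identification of the motivic extension algebra with the quiver Schur algebra $A_\mathbf{d}$ in place of the KLR algebra. First I would verify that conditions (PT) and (FO) still apply in this enlarged setting. Inspecting Proposition~\ref{prop:condictionsptandpointypea}, the proof never uses completeness of the composition: (FO) depends only on the finiteness of $\GL(\mathbf{d})$-orbits in $\Rep(\mathbf{d})$, and (PT) reduces, via Proposition~\ref{prop:puretatefiberimpliespuretate} and Proposition~\ref{prop:affinepavingpuretate}, to the existence of affine pavings of $\Fl(M,\underline{\mathbf{d}})$ for every composition $\underline{\mathbf{d}} \in \Comp(\mathbf{d})$---which is supplied by \cite{maksimau_flag_2019} in type $A$ and by Theorem~\ref{thm:cylicquiverflagpaving} in type $\widetilde{A}$. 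Therefore Theorem~\ref{thm:main} gives an equivalence
$$\DM^{Spr}_{\GL(\mathbf{d})}(\Rep(\mathbf{d}),\Q) \cong \DperfZ(E),$$
where $E$ is now built from the full family $\{\mu_{\underline{\mathbf{d}},!}\Q_{\QQ(\underline{\mathbf{d}})}\}_{\underline{\mathbf{d}} \in \Comp(\mathbf{d})}$.

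The remaining work is to show $E \cong A_\mathbf{d}$ as graded algebras. By Corollary~\ref{cor:extensionalgebraaschowgroups},
$$E \;\cong\; \bigoplus_{\underline{\mathbf{d}},\underline{\mathbf{e}} \in \Comp(\mathbf{d})} \Chow^{\GL(\mathbf{d})}_{\bullet}\bigl(\QQ(\underline{\mathbf{d}}) \times_{\Rep(\mathbf{d})} \QQ(\underline{\mathbf{e}})\bigr)_\Q$$
with the convolution product \eqref{eq:convolution}, while the quiver Schur algebra of Stroppel--Webster \cite{stroppel_quiver_2014} is defined as precisely the same direct sum but formulated in equivariant Borel--Moore homology. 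To bridge the two, I would use that the generalized Steinberg varieties $Z_{\underline{\mathbf{d}},\underline{\mathbf{e}}} = \QQ(\underline{\mathbf{d}}) \times_{\Rep(\mathbf{d})} \QQ(\underline{\mathbf{e}})$ themselves admit affine pavings: stratifying $\Rep(\mathbf{d})$ by isomorphism classes and using the affine pavings of the (partial) quiver flag varieties produced above, each $Z_{\underline{\mathbf{d}},\underline{\mathbf{e}}}$ is paved by products of cells. The cycle class map therefore yields a $\GL(\mathbf{d})$-equivariant isomorphism between Chow groups and Borel--Moore homology, and this isomorphism intertwines the two convolution products because both are assembled from proper pushforward, refined Gysin pullback and exterior product---operations which are compatible with the cycle class map.

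The main obstacle I anticipate is the algebra identification $E \cong A_\mathbf{d}$, and specifically verifying that Stroppel--Webster's convolution description of $A_\mathbf{d}$ matches our composition in $\DM_{\GL(\mathbf{d})}(\Rep(\mathbf{d}),\Q)$. This is the analogue of what Varagnolo--Vasserot carried out for the KLR algebra in \cite{varagnolo_canonical_2011}; their arguments transport to the Schur situation without essential change, since all geometric input---the smoothness of $\QQ(\underline{\mathbf{d}})$, the properness of $\mu_{\underline{\mathbf{d}}}$, the explicit generators in terms of merges/splits and polynomial multiplications---remains unchanged when one allows non-complete flags. Once this algebra identification is in place, Theorem~\ref{thm:main} delivers the stated equivalence together with the $\ell$-adic degrading statement, and the proof proceeds verbatim as for Theorem~\ref{thm:quiverheckealgebra}.
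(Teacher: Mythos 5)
Your proposal follows the same route as the paper: the paper's proof of Theorem~\ref{thm:quiverschuralgebra} is exactly the proof of Theorem~\ref{thm:quiverheckealgebra} with complete compositions replaced by arbitrary compositions, noting that Proposition~\ref{prop:condictionsptandpointypea} and Theorem~\ref{thm:cylicquiverflagpaving} already cover all $\underline{\mathbf{d}}\in\Comp(\mathbf{d})$, and that the motivic extension algebra is identified with the quiver Schur algebra via the cycle class map from Chow groups to equivariant Borel--Moore homology, which is an isomorphism because the relevant varieties have affine pavings. One minor imprecision: stratifying $\Rep(\mathbf{d})$ by orbits does not literally pave the Steinberg variety $Z_{\underline{\mathbf{d}},\underline{\mathbf{e}}}$ by affine cells (orbit strata are not affine spaces); the correct statement is that $Z_{\underline{\mathbf{d}},\underline{\mathbf{e}}}$ admits a cellular fibration over $\QQ(\underline{\mathbf{d}})$ (a vector bundle over a flag variety) with quiver-flag-variety fibers, and this fibration structure together with the affine pavings of the fibers is what forces the cycle class map to be an isomorphism and kills odd Borel--Moore homology---but this is exactly the mechanism the paper has in mind, so your argument is essentially identical to the intended one.
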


\bibliographystyle{amsalpha} 
\bibliography{main}

\end{document}